\newtheorem{thm}{Theorem}
\newtheorem{prop}[thm]{Proposition}
\newtheorem{lem}[thm]{Lemma}
\newtheorem{cor}[thm]{Corollary}
\theoremstyle{definition}
\newtheorem{defin}[thm]{Definition}
\newtheorem{rem}[thm]{Remark}
\journal{Journal\ldots}
\newcommand{\mk}{\mathfrak}
\newcommand{\ca}{\mathcal}
\newcommand{\wt}{\widetilde}
\newcommand{\ov}{\overline}
\DeclareMathOperator{\Ext}{Ext}
\DeclareMathOperator{\rank}{rank}
\DeclareMathOperator{\Hom}{Hom}
\DeclareMathOperator{\Norm}{N}
\begin{document}

\def\Q{\mathbb{Q}}
\def\Z{\mathbb{Z}}
\def\C{\mathbb{C}}
\def\Gal{\operatorname{Gal}}
\def\res{\operatorname{res}}
\def\Fr{\operatorname{Frob}}
\def\co{\operatorname{cond}}
\def\id{\operatorname{id}}
\def\N{\operatorname{N}}
\def\mm{\mathcal{M}}
\def\M{m}
\def\fq{\mathfrak{q}}
\def\fl{\mathfrak{l}}
\def\fQ{\mathfrak{Q}}
\def\HH{B}
\def\Hm#1#2{\operatorname{Hom}_{{#1}}(#2,{#1})}
\def\lanran#1{\langle#1\rangle }
\def\lanrang#1{\langle#1\rangle_\Gamma }

\def\smj{(1-\sigma)}
\def\smjb{{1-\sigma}}
\def\ann#1{\operatorname{Ann}_{\Z[\Gamma]}(#1)}
\def\en{\pi}
\def\cl{{\operatorname{Cl}}}
\def\Cl{{\operatorname{Cl}(L)}}
\def\im{\operatorname{im}}
\def\hom#1{\operatorname{Hom}(#1,\langle\zeta_\M\rangle)}
\def\Oqqs{(\mathcal{O}_{L[\fq]}/\tilde \fq)^\times}

\def\exponent{k}
\def\ram{t}
\def\dec{n}
\def\GrEllNum{P}
\def\GrEllUn{\mathcal{C}}
\def\zobr{z}

\def\sst{\scriptstyle}
\newcommand\mycom[2]{\genfrac{}{}{0pt}{}{#1}{#2}}
\def\HomR#1{\operatorname{Hom}_R\bigl(#1,R\bigr)}

\title{Annihilators of the ideal class group\\of a cyclic extension of an imaginary quadratic field}

\author[ul]{Hugo Chapdelaine}
\ead{hugo.chapdelaine@mat.ulaval.ca}

\author[mu]{Radan Ku\v cera\fnref{fn1}}
\ead{kucera@math.muni.cz}

\fntext[fn1]{The second author was supported under Project 
15-15785S of the Czech Science Foundation.}

\address[ul]{Faculty of Science and Engineering,
Laval University,
Qu\'ebec G1V 0A6, 
Canada}

\address[mu]{Faculty of Science, 
Masaryk university,
611 37 Brno, Czech Republic
}

\begin{abstract}
The aim of this paper is to study the group of elliptic units of a
cyclic extension $L$ of an imaginary quadratic field $K$ such that the degree $[L:K]$ is a power of an odd prime $p$. 
We construct an explicit root of the usual top generator of this group and 
we use it to obtain an annihilation result of the $p$-Sylow subgroup of the ideal class group of $L$. \par
\end{abstract}

\begin{keyword}
Annihilators, class group, elliptic units.
\MSC{Primary 11R20; Secondary 11R27; 11R29.}
\end{keyword}

\maketitle


\section*{Introduction}

This work was motivated by the series of papers \cite{AnnI}, \cite{AnnII} and \cite{AnnIII}, which studied annihilators of 
the $p$-Sylow subgroup of the ideal class group of a cyclic abelian field $L$ over $\Q$, whose degree is a power of an odd prime $p$; these 
annihilators were obtained by means of circular units. The goal of this paper is to study annihilators of 
the $p$-Sylow subgroup of the ideal class group of a field $L$ which is a cyclic extension over $K$, 
where $K$ is an imaginary quadratic field whose class number 
$h=h_K$ is not divisible by $p$. In this new setting, the former role played by the circular units is now
being played by the so-called {\it elliptic units}. Similarly to the previous series of papers, certain annihilators of the ideal class group
of $L$ are obtained by means of elliptic units above $K$. Recall that, in essence, an elliptic unit above $K$ is a unit which lives in an abelian extension of $K$ and 
which is obtained by evaluating a certain modular unit (i.e. a modular function whose divisor is supported at the cusps) at an element $\tau\in K\cap\mathfrak{h}$, where
$\mathfrak{h}$ corresponds to the Poincar\'e upper half-plane. Depending on what applications one has in mind, different choices
of modular units have been considered in the literature. For the present paper, we use a slight modification of the group of 
elliptic units introduced by Oukhaba in \cite{H}; the only difference being that we do not raise the generators 
of the group of elliptic units considered in \cite{H} to the $h$-th power. The index of our group 
of elliptic units $\GrEllUn_L$ in the group $\mathcal{O}_{L}^\times$ of all units of $L$ is given in Lemma~\ref{EliptickeJednotky}. 
Then, starting from the group $\GrEllUn_L$, we proceed to extract certain roots (where the root exponents are group ring elements) 
of the generators of $\GrEllUn_L$ which again lie
in $L$. These roots of elliptic units allow us to define an {\it enlarged 
group of elliptic units} $\overline\GrEllUn_L$, whose index in $\mathcal{O}_{L}^\times$ is given in Theorem~\ref{Indexy}.
This enlarged group $\overline\GrEllUn_L$ form an important ingredient of the main result of this paper: any annihilator of the $p$-Sylow 
part of the quotient $\mathcal{O}_{L}^\times/\overline\GrEllUn_L$ must annihilate a certain (very explicit) subgroup of the $p$-Sylow part
of the ideal class group of $L$, see Theorem~\ref{Theorem4}.

We would like to emphasize that many of the techniques used in this paper borrow heavily from the ones introduced in \cite{AnnI} and \cite{AnnIII}. 
In order to keep the paper within a reasonable size, we faced the problem of choosing what proofs to present in full details and what proof to only sketch (or omit). 
For each of the proofs,  we have decided to distinguish whether the needed modifications are straightforward or not. Of course,
such choices are subjective but we hope that our chosen style clarifies the overall presentation, and, at the same time, has
the effect of highlighting the new ideas. For example, in the construction of nontrivial roots of elliptic units given in Sections \ref{Odmocnina} and \ref{Semi},
we decided to give all the details, whereas the necessary modifications of Theorem~\ref{Theorem17} in the style of Rubin are left to the reader.

\section{Notation and preliminaries}\label{bit}
Let $K$ be an imaginary quadratic number field, $H=H_K$ be the Hilbert class field of $K$, $h=h_K=[H:K]$ be the class number of $K$, and let $L/K$ be a 
cyclic Galois extension of degree $p^\exponent$ where $p$ is an odd prime and $k$ is a positive integer. We let
$\Gamma=\Gal(L/K)=\langle\sigma\rangle$ where $\sigma$ is a fixed generator.
We suppose that $p\nmid h$ and that there are exactly $s\ge2$ ramified primes in $L/K$. It follows from the first
assumption that $L\cap H=K$. 
Let $\wp_1,\dots,\wp_s$ be all the (pairwise distinct) prime ideals of $K$ which ramify in $L/K$. 
For each $j\in I=\{1,\dots,s\}$ we choose a generator $\pi_j\in\mathcal{O}_K$ of the principal ideal $\wp_j^h$ and we let $q_j\in\Z$ be the only rational prime number in $\wp_j$.  
We suppose that $p$ is unramified 
in $L/\Q$ and that each $q_j$ is unramified
in $K/\Q$. In particular, this implies that $p\nmid|\mu_L|$, and that $p\neq q_j$ for all $j\in J$. 
Here $\mu_F$ denotes the group of roots of unity of a field $F$.

For each $j\in I$ let us fix an arbitrarily chosen prime ideal $\mathfrak{P}_j$ of $L$ above $\wp_j$. 
Let $\ram_j$ be the ramification index of $\mathfrak{P}_j$ over $\wp_j$ and let $\dec_j$ be the index of the decomposition group of $\mathfrak{P}_j$ in $\Gamma$.
It follows that $\ram_j\dec_j\mid p^\exponent$ and that $\{\mathfrak{P}_j^{\sigma^{i}}\}_{i=0}^{\dec_j-1}$ is the full set 
of distinct prime ideals of $L$ above $\wp_j$. In particular, we have the following decomposition: 
$$
\wp_j\mathcal{O}_{L}=\prod_{i=0}^{\dec_j-1}\mathfrak{P}_j^{\ram_j\sigma^i}.
$$
We consider the completion $\Q_{q_j}\subseteq K_{\wp_j}\subseteq L_{\mathfrak{P}_j}$ of $\Q\subseteq K\subseteq L$. Since the extension of local fields
$L_{\mathfrak{P}_j}/K_{\wp_j}$ has a ramification index equal to $t_j$, it follows from local 
class field theory that the group $\mathcal{O}_{K_{\wp_j}}^\times$ of units of $\mathcal{O}_{K_{\wp_j}}$ has a closed subgroup of index $\ram_j$, 
namely the subgroup $\N_{L_{\mk{P}_j}/K_{\wp_j}}(\ca{O}_{L_{\mk{P}_j}}^{\times})$. It is well-known that $\mathcal{O}_{K_{\wp_j}}^\times$ is 
the direct product of the group of principal units $\ca{U}_j=\{\epsilon\in\mathcal{O}_{K_{\wp_j}}^\times;\,\epsilon\equiv1\pmod{\wp_j}\}$ 
and of the subgroup of roots of unity of orders coprime to $q_j$, which is a finite cyclic group isomorphic to 
$(\mathcal{O}_{K_{\wp_j}}/\wp_j\mathcal{O}_{K_{\wp_j}})^\times\cong(\mathcal{O}_{K}/\wp_j)^\times$, whose order is $|\mathcal{O}_{K}/\wp_j|-1=\N_{K/\Q}(\wp_j)-1$.
Moreover, it is well-known that if the index of a closed subgroup of $\mathcal{U}_j$ is finite, then this index is a power of $q_j$, 
and so it is coprime to $t_j$ (a power of $p$).
Therefore, we must have $\N_{K/\Q}(\wp_j)\equiv1\pmod {\ram_j}$. 

Since $\wp_1,\dots,\wp_s$ are all the prime ideals which ramify in $L/K$ and there is no real embedding of $K$ we see that the 
conductor of $L/K$ is $\prod_{j\in I}\wp_j^{a_j}$ for some positive integers $a_j\geq 1$. 
Since tamely ramified extensions have square-free conductors (see for example \cite[II.5.2.2(ii) on page 151]{Gras}), we must have $a_j=1$ for all $j\in I$.

\section{The distinguished subfields $F_j$'s}

For each non-zero ideal $\mk{m}\subseteq\ca{O}_K$, let us denote by $K(\mk{m})$ the {\it ray class field of $K$ of modulus $\mk{m}$}.
For any subset $\emptyset\ne J\subseteq I=\{1,\dots,s\}$, we also let $\mathfrak{m}_J=\prod_{j\in J}\wp_j$. 
In the previous section we showed that $L\subseteq K(\mk{m}_I)$. In fact, more is true.
A simple exercise in class field theory shows that the index $[K(\mk{m}_I):\prod_{j\in I} K(\wp_j)]$ divides a power of
$|\mu_K|$, where the product is meant for the compositum of the fields $K(\wp_j)$'s. Since $p\nmid|\mu_K|$, it follows 
that $L\subseteq \prod_{j\in I} K(\wp_j)$.

We would like now to introduce, for each index $j\in I$, a distinguished subfield $F_j\subseteq K(\wp_j)$. 
The following elementary lemma will be used in the definition of $F_j$ and also in Definition \ref{neat4}.

\begin{lem}\label{cyc}
Let $T$ be an abelian group (written additively and not necessarily finite) and let $n$ be a positive integer. If $T/nT\cong\Z/n\Z$ then $T$ admits a unique
subgroup of index $n$, namely $nT$. Let $(T,S,n)$ be a triple such that $T$ is an abelian group, $S\leq T$ is a subgroup of finite index $[T:S]$ and
$n$ is a positive integer. Assume that $\gcd(n,[T:S])=1$. Then the natural map $\pi:S/nS\rightarrow T/nT$ is an isomorphism. 
\end{lem}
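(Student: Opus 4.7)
Both halves are essentially formal, so I will aim for a clean and self-contained argument. Throughout I write everything additively, as the statement does.

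For the first assertion, I start from the observation that any subgroup $U\le T$ of index $n$ must contain $nT$: the quotient $T/U$ has order $n$, so multiplication by $n$ kills it, i.e.\ $nT\subseteq U$. Thus subgroups of $T$ of index $n$ are in bijection with subgroups of $T/nT$ of index $n$ via $U\mapsto U/nT$. But by hypothesis $T/nT\cong \Z/n\Z$, so it has a \emph{unique} subgroup of index $n$, namely the trivial one. Pulling back, $U=nT$ is the unique subgroup of index $n$ in $T$.

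For the second assertion, put $m=[T:S]$ and, using $\gcd(m,n)=1$, fix integers $a,b\in\Z$ with $am+bn=1$. The key observation is that $mt\in S$ for every $t\in T$, since $T/S$ has exponent dividing $m$.

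To see that $\pi:S/nS\to T/nT$ is injective, suppose $s\in S$ maps to $0$, so $s=nt$ for some $t\in T$. Then
\[
t=(am+bn)t=a(mt)+b(nt)=a(mt)+bs,
\]
and the right-hand side lies in $S$ because $mt\in S$ and $s\in S$; hence $t\in S$, so $s=nt\in nS$. To see that $\pi$ is surjective, let $t\in T$ be arbitrary. Then $t-a(mt)=b(nt)\in nT$, so $t$ and $a(mt)$ have the same image in $T/nT$, and $a(mt)\in S$ provides a preimage in $S/nS$.

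The argument is entirely formal; there is no real obstacle, only the mild bookkeeping of not confusing $nT\cap S$ with $nS$, which is exactly what the Bézout relation $am+bn=1$ resolves. The same device can, of course, be repackaged as a snake-lemma computation on $0\to S\to T\to T/S\to 0$ after tensoring with $\Z/n\Z$ (noting $T/S$ has order prime to $n$), but the elementary version above is shorter and all that the rest of the paper will need.
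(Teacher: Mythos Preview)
Your proof is correct and complete. The paper itself omits the proof entirely, stating only ``The elementary proof is left to the reader,'' so your argument simply supplies the details the authors chose not to include; the B\'ezout-identity trick you use for the second assertion is exactly the kind of elementary manipulation they had in mind.
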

\begin{proof}
The elementary proof is left to the reader.  
\end{proof}

From class field theory we have a canonical isomorphism $\Gal(K(\wp_j)/H)\cong (\mathcal{O}_{K}/\wp_j)^\times/\operatorname{im}\mu_K$,
which is a cyclic group of order divisible by $t_j$. Since $p\nmid h$, we may apply Lemma \ref{cyc} to the triple $(\Gal(K(\wp_j)/K),\Gal(K(\wp_j)/H),\ram_j)$ and define
$F_j$ as the unique subfield of $K(\wp_j)$ such that $[F_j:K]=\ram_j$.
One may check that the extension $F_j/K$ satisfies
the following properties: $F_j\cap H=K$ and $F_j/K$ is unramified outside of $\wp_j$ and totally ramified at $\wp_j$.

For any $\emptyset\ne J\subseteq I=\{1,\dots,s\}$, it is convenient to introduce the shorthand notation $K_J=K(\mathfrak{m}_J)$  
and $F_J=\prod_{j\in J}F_j\subseteq K_J$. Note that the conductor of $F_J$ over $K$ is $\mathfrak{m}_J$. 
It follows from the definition of $F_{I}$ that
$\Gal(F_I/F_{I-\{j\}})$ is the inertia subgroup of a prime of $F_I$ above $\wp_j$ (note that $I-\{j\}\neq\emptyset$ since $|I|\geq 2$). In particular, 
for each $j\in J$, $|\Gal(F_I/F_{I-\{j\}})|=t_j$. The next lemma gives the main properties of the Galois extension $F_I/K$. 

\begin{prop}\label{LemmaPrimySoucin}
For each $j\in I$, we have $F_jK_{I-\{j\}}=LK_{I-\{j\}}$. The Galois group
\begin{equation}\label{PrimySoucin}
G=\Gal(F_I/K)=\prod_{j\in I}\Gal(F_I/F_{I-\{j\}})
\end{equation}
is the direct product of its inertia subgroups.
Moreover $L\subseteq F_I$. 
\end{prop}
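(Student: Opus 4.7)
The plan is to prove the three assertions in the order (\ref{PrimySoucin}), then $L\subseteq F_I$, and finally $F_jK_{I-\{j\}}=LK_{I-\{j\}}$.

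For the direct-product formula (\ref{PrimySoucin}), I first observe that $F_{I-\{j\}}/K$ is unramified at $\wp_j$ (each $F_i$ with $i\ne j$ has conductor $\wp_i$), whereas $F_j/K$ is totally ramified at $\wp_j$; hence $F_j\cap F_{I-\{j\}}$ is a subfield of $F_j$ unramified at $\wp_j$ and so equals $K$. The usual compositum criterion for Galois extensions then yields a restriction isomorphism $\Gal(F_I/K)\cong\prod_{j\in I}\Gal(F_j/K)$, under which $\Gal(F_I/F_{I-\{j\}})$ corresponds to the $j$-th direct factor. That this factor is the inertia at $\wp_j$ follows by combining the direct product with the fact that inertia at $\wp_j$ in each $\Gal(F_i/K)$ is the whole group when $i=j$ and trivial when $i\ne j$, so that the inertia at $\wp_j$ in $\Gal(F_I/K)$ both lies in and surjects onto the $j$-th factor.

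For $L\subseteq F_I$, I would pass to the $p$-part of $\Omega:=\prod_{j\in I}K(\wp_j)$. Let $M_j$ denote the maximal $p$-subextension of $K(\wp_j)/K$; then $F_j\subseteq M_j$, and $\Gal(M_j/K)$ is cyclic since it is the $p$-Sylow of the cyclic group $\Gal(K(\wp_j)/H)$ (using $p\nmid h$). The same conductor argument shows $M_j\cap\prod_{i\ne j}M_i=K$: the intersection is an everywhere-unramified $p$-extension, hence contained in $H$ and thus trivial since $p\nmid h$. Therefore $\Omega_p:=\prod_jM_j$ is the maximal $p$-subextension of $\Omega/K$, with $\Gal(\Omega_p/K)=\prod_j\Gal(M_j/K)$. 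Since $L/K$ is a $p$-extension and $L\subseteq\Omega$, we have $L\subseteq\Omega_p$. Now restrict the surjection $\phi\colon\Gal(\Omega_p/K)\twoheadrightarrow\Gal(L/K)\cong\Z/p^\exponent\Z$ to the direct factor $\Gal(M_j/K)$ (which, as in the previous paragraph, is the inertia at $\wp_j$ of $\Omega_p/K$): its image is the inertia at $\wp_j$ in $\Gal(L/K)$, a cyclic subgroup of order $t_j$. Hence the kernel of this restriction is the unique subgroup of the cyclic group $\Gal(M_j/K)$ of index $t_j$, and the restriction factors through the unique subfield of $M_j$ of degree $t_j$, namely $F_j$. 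Therefore $\phi$ factors through $\prod_j\Gal(F_j/K)=\Gal(F_I/K)$, yielding $L\subseteq F_I$.

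Finally, the equality $LK_{I-\{j\}}=F_jK_{I-\{j\}}$ comes down to degree counting. The inclusion $\subseteq$ is immediate from $L\subseteq F_I=F_j\cdot F_{I-\{j\}}\subseteq F_jK_{I-\{j\}}$. One has $[F_jK_{I-\{j\}}:K_{I-\{j\}}]=[F_j:F_j\cap K_{I-\{j\}}]=t_j$ since $F_j\cap K_{I-\{j\}}$ is unramified at $\wp_j$. On the other hand $L\cap K_{I-\{j\}}$ is a subfield of $L$ unramified at $\wp_j$, and being a subfield of $L$ (conductor $\mathfrak{m}_I$) its conductor divides $\mathfrak{m}_{I-\{j\}}$, so it contains, and hence coincides with, the maximal unramified-at-$\wp_j$ subfield of $L$, which has degree $p^\exponent/t_j$ by the cyclic structure of $L/K$. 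Thus $[LK_{I-\{j\}}:K_{I-\{j\}}]=t_j$ as well, forcing equality. The main obstacle is correctly matching inertia subgroups with direct factors in the two decompositions $\Gal(F_I/K)=\prod\Gal(F_j/K)$ and $\Gal(\Omega_p/K)=\prod\Gal(M_j/K)$, which is what drives the factorization step in the argument for $L\subseteq F_I$. The recurring input is the conductor--ramification dictionary together with the hypothesis $p\nmid h$, which forces every everywhere-unramified $p$-extension inside the ambient ray class fields to be trivial.
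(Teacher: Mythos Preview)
Your proof is correct, but it follows a genuinely different route from the paper's, and the comparison is worth recording.

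The paper proves the three assertions in the order (1)--(2)--(3): it first establishes $F_jK_{I-\{j\}}=LK_{I-\{j\}}$ directly inside $K_I$, by noting that $\Gal(K_I/K_{I-\{j\}})$ is cyclic (being totally tamely ramified) and that the two subgroups $\Gal(K_I/F_jK_{I-\{j\}})$ and $\Gal(K_I/LK_{I-\{j\}})$ have the same index $t_j$, hence coincide. The inclusion $L\subseteq F_I$ is then deduced \emph{from} this equality: one checks that $M:=\bigcap_{j\in I}F_jK_{I-\{j\}}$ equals $F_IH$ by a ramification-index count, so $L\subseteq M=F_IH$, and then $p\nmid h$ forces $L\subseteq F_I$.

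You reverse the logical dependence, proving (2)--(3)--(1). Your argument for $L\subseteq F_I$ is the main point of departure: rather than intersecting composita with the ray class fields $K_{I-\{j\}}$, you introduce the maximal $p$-subextensions $M_j$ of $K(\wp_j)/K$, observe that their compositum $\Omega_p$ has $\Gal(\Omega_p/K)=\prod_j\Gal(M_j/K)$ with each factor cyclic and equal to the inertia at $\wp_j$, and then factor the surjection $\Gal(\Omega_p/K)\twoheadrightarrow\Gamma$ componentwise through the unique index-$t_j$ quotients $\Gal(F_j/K)$. This is a clean group-theoretic factorization argument that makes the role of the cyclicity of each $\Gal(M_j/K)$ transparent; the paper's approach instead exploits cyclicity of the single group $\Gal(K_I/K_{I-\{j\}})$. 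Your deduction of the equality $F_jK_{I-\{j\}}=LK_{I-\{j\}}$ then becomes a simple corollary of $L\subseteq F_I$ by degree comparison, whereas in the paper it is the starting point. Both approaches ultimately rest on the same inputs---the conductor--ramification dictionary and $p\nmid h$---but organize them differently.
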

\begin{proof}
Recall that the conductor of $L$ over $K$ is $\mathfrak{m}_I$, and hence $L\subseteq K_I$. 
For any $j\in I$,
the inertia group of a prime of $K_I$ above $\wp_j$ is $\Gal(K_I/K_{I-\{j\}})$ and so, the inertia group of
a prime of $L$ above $\wp_j$ is $\Gal(L/L\cap K_{I-\{j\}})$ 
(the restriction of $\Gal(K_I/K_{I-\{j\}})$ to $L$).
Hence $\Gal(LK_{I-\{j\}}/K_{I-\{j\}})\cong\Gal(L/L\cap K_{I-\{j\}})$ is of order $\ram_j$. 
An easy ramification argument shows that 
\begin{align}\label{sam}
F_j\cap K_{I-\{j\}}=K.
\end{align}
Indeed, $F_j\cap K_{I-\{j\}}$ is an unramified abelian 
extension of $K$, so that $F_j\cap K_{I-\{j\}}\subseteq H\cap F_j=K$, where the last equality follows from the fact that 
$p\nmid h$. Therefore, $\Gal(F_jK_{I-\{j\}}/K_{I-\{j\}})\cong\Gal(F_j/K)$ is also of order $\ram_j$. 
We thus have proved that the two subgroups $\Gal(K_I/F_jK_{I-\{j\}})$ and $\Gal(K_I/LK_{I-\{j\}})$ have the same index inside
$\Gal(K_I/K_{I-\{j\}})$. Since $K_{I}/K_{I-\{j\}}$ is totally tamely ramified at each prime of $K_I$ above $\wp_j$, it follows that $\Gal(K_I/K_{I-\{j\}})$
is cyclic which forces the group equality
\begin{equation}\label{Inkluze}
\Gal(K_I/F_jK_{I-\{j\}})=\Gal(K_I/LK_{I-\{j\}})\subseteq\Gal(K_I/K_{I-\{j\}}).
\end{equation}
In particular, it follows from \eqref{Inkluze} that $F_jK_{I-\{j\}}=LK_{I-\{j\}}$ which proves 
the first claim. Let us now show \eqref{PrimySoucin}. An argument similar to the proof of \eqref{sam}
implies that $\bigcap_{j\in I} F_{I-\{j\}}=K$, and thus $G$ is generated by 
$\bigcup_{j\in I}\Gal(F_I/F_{I-\{j\}})$. Also, since $F_{I-\{j\}}F_j=F_{I}$, we have $\Gal(F_I/F_{I-\{j\}})\cap\Gal(F_I/F_j)=\{\id\}$
and therefore $G$ is the direct product of the groups $\Gal(F_I/F_{I-\{j\}})$'s which gives \eqref{PrimySoucin}.

It remains to show that $L\subseteq F_I$. Set $M=\bigcap_{j\in I}F_jK_{I-\{j\}}$. Note that $L\subseteq M$ (by the first part of Proposition
\ref{LemmaPrimySoucin}) and that $F_IH\subseteq M$. 

We claim that $F_IH=M$, in particular, this will imply
that $L\subseteq F_IH$. Let us prove it. The inertia group of each prime of $M$ above $\wp_j$  
is of order at most $\ram_j$ since the ramification index of $\wp_j$ in $F_jK_{I-\{j\}}/K$ is equal to $\ram_j$. 
On the one hand, since the maximal unramified subextension of $M/K$ is $H/K$, it follows that (i) $[M:H]\le\prod_{j\in I}\ram_j$.
On the other hand, since $\Gal(F_I/K)$ is a $p$-group and $p\nmid h=[H:K]$, we have $H\cap F_I=K$, so that 
$\Gal(F_IH/H)\cong\Gal(F_I/K)$, and thus from (1) we deduce that (ii) 
$[F_IH:H]=[F_I:K]=\prod_{j\in I}\ram_j$. Combining (i), (ii) with the inclusion $F_IH\subseteq M$ we obtain that $F_IH=M$.

The paragraph above just proved that  $L\subseteq F_IH=M$.
Since $p\nmid h=[F_IH:F_I]$ and $\Gal(F_I/K)$ is a $p$-group,
it follows that $\Gal(F_IH/F_I)$ is the smallest subgroup of the abelian group $\Gal(F_IH/K)$ whose index is a power of $p$, which implies that $L\subseteq F_I$.
This concludes the proof. 
\end{proof}

\begin{cor}\label{neat}
(i) For each index $j\in I$, the inertia subgroup of a prime of $L$ above 
$\wp_j$ is $\Gal(L/L\cap F_{I-\{j\}})=\langle\sigma^{p^k/t_j}\rangle$; 
moreover $F_{I-\{j\}}L=F_I$ and $[L\cap F_{I-\{j\}}:K]=\frac{p^k}{t_j}$.

(ii) $F_I/L$ is an unramified abelian extension.

(iii) There exists at least one index $j_0\in I$ such that $t_{j_0}=p^k$ so that 
the abelian Galois group $G=\Gal(F_I/K)$ has exponent $p^k$.
\end{cor}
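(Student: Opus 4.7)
The plan is to first prove (i) by a ramification/degree comparison combining Proposition~\ref{LemmaPrimySoucin} with the observation that, for $i\neq j$, the field $F_i\subseteq K(\wp_i)$ is unramified outside $\wp_i$ and hence at $\wp_j$, so $F_{I-\{j\}}$ and therefore $L\cap F_{I-\{j\}}$ are unramified at $\wp_j$.

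For (i), this unramifiedness forces $t_j\mid[L:L\cap F_{I-\{j\}}]$, since $t_j$ is the ramification index of $\wp_j$ in $L/K$. On the other hand, $L\subseteq F_I=F_jF_{I-\{j\}}$ together with the identity $[F_I:F_{I-\{j\}}]=t_j$ from Proposition~\ref{LemmaPrimySoucin} gives
\[
[L:L\cap F_{I-\{j\}}]=[LF_{I-\{j\}}:F_{I-\{j\}}]\leq[F_I:F_{I-\{j\}}]=t_j.
\]
Combining both sides, $[L:L\cap F_{I-\{j\}}]=t_j$, so $[L\cap F_{I-\{j\}}:K]=p^k/t_j$ and $LF_{I-\{j\}}=F_I$. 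The inertia subgroup of a prime of $L$ above $\wp_j$ has order $t_j$ and fixes $L\cap F_{I-\{j\}}$, so it must equal $\Gal(L/L\cap F_{I-\{j\}})$; being a subgroup of order $t_j$ in the cyclic group $\Gamma=\langle\sigma\rangle$ of order $p^k$, it is $\langle\sigma^{p^k/t_j}\rangle$.

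For (ii), the equality $LF_{I-\{j\}}=F_I$ from (i) turns the restriction map $\Gal(F_I/F_{I-\{j\}})\to\Gal(L/L\cap F_{I-\{j\}})$ into an isomorphism, whence $\Gal(F_I/L)\cap\Gal(F_I/F_{I-\{j\}})=\{\id\}$. Since $\Gal(F_I/F_{I-\{j\}})$ is the inertia group at a prime of $F_I$ above $\wp_j$, no nontrivial element of $\Gal(F_I/L)$ lies in inertia at any prime over $\wp_j$; because the $\wp_j$ are the only primes ramifying in $F_I/K$, the extension $F_I/L$ is unramified, and it is abelian as a subextension of the abelian extension $F_I/K$.

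For (iii), Proposition~\ref{LemmaPrimySoucin} writes $G$ as a direct product of cyclic $p$-groups of orders $t_j$, so the exponent of $G$ is $\max_j t_j$. Since $L\subseteq F_I$ and $\Gal(L/K)$ is a cyclic quotient of $G$ of order $p^k$, some element of $G$ has order at least $p^k$; combined with $t_j\mid p^k$ for every $j$, this yields $t_{j_0}=p^k$ for at least one index $j_0$. The main technical hurdle is the double inequality in (i), which juxtaposes the ramification lower bound with the degree identity supplied by Proposition~\ref{LemmaPrimySoucin}; once (i) is in hand, parts (ii) and (iii) are essentially formal.
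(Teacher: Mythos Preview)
Your proof is correct and follows essentially the same approach as the paper's. The only cosmetic difference is in (i): the paper directly identifies $\Gal(L/L\cap F_{I-\{j\}})$ as the image of the inertia group $\Gal(F_I/F_{I-\{j\}})$ under restriction and compares orders, whereas you sandwich $[L:L\cap F_{I-\{j\}}]$ between a ramification lower bound and a degree upper bound; parts (ii) and (iii) are handled identically.
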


\begin{proof} Recall that $\Gal(F_I/F_{I-\{j\}})$ is the inertia subgroup of a prime of $F_I$ above $\wp_j$. We have $L\subseteq F_I$ by Proposition \ref{LemmaPrimySoucin}, and so 
$\Gal(L/L\cap F_{I-\{j\}})$ is the inertia subgroup of a prime of $L$ above $\wp_j$. Since both of
these inertia subgroups have the same order 
$t_j$, and $\langle\sigma^{p^k/t_j}\rangle$ is the only subgroup of $\Gamma$ of order $t_j$,
we get (i) and we see that $F_I/L$ is unramified at each prime of $L$ above $\wp_j$. 
But $F_I/L$ can be ramified only at primes above $\wp_1$, \dots, $\wp_s$ 
because the conductor of $F_I$ over $K$ is $\mathfrak{m}_I$, and (ii) follows. 
By \eqref{PrimySoucin}, the exponent of $G$ is the maximum of all $t_j$'s, and so it divides $p^k$. 
But since $\Gamma$ is a cyclic quotient of $G$ of order $p^k$,
we obtain (iii). 
\end{proof}

\section{Introducing the group of elliptic units}\label{sec_2}
For the rest of the paper, we fix once and for all an embedding $\ov{\Q}\subseteq \C$. In particular, the inclusion 
$K\subseteq \C$ singles out one of the two embeddings of $K$ into $\C$. 
For any subset $\emptyset\ne J\subseteq I$, we let $f_J$ be the least positive integer in $\mathfrak{m}_J$,  
\begin{equation}\label{Definice_wJ}
w_{J}=|\{\zeta\in\mu_K;\,\zeta\equiv1\,(\operatorname{mod}\mathfrak{m}_J)\}|,
\end{equation}
so that $w_J$ divides $w_K:=|\mu_K|$, and we let
\begin{equation}\label{DefiniceEtaF}
\eta_J=
\N_{K_{J}/F_J}(\varphi_{\mathfrak{m}_J})^{w_Kf_I/(w_{J}f_J)}\in\mathcal{O}_{F_J},
\end{equation}
where $\varphi_{\mathfrak{m}_J}$ is defined as in \cite[Definition 2 on page 5]{H}. We would like to point out that the definition 
of $\varphi_{\mathfrak{m}_J}$, as a complex number, uses implicitly the fact that $K$ is included in $\C$.

For a finite abelian extension $M/F$ and a prime ideal $\mk{p}$ of $F$ which is unramified in $M/F$, we use the Artin symbol
$\bigl(\frac{M/F}{\mk{p}}\bigr)\in\Gal(M/F)$ to denote the Frobenius automorphism of $\mk{p}$ in the relative extension $M/F$. 

For any $j\in I$, we let $\lambda_j\in G=\Gal(F_I/K)$ be the unique automorphism such that
$\lambda_j\big|_{F_{I-\{j\}}}=\bigl(\frac{F_{I-\{j\}}/K}{\wp_j}\bigr)$ and $\lambda_j\big|_{F_j}=1$.
The next lemma will be used in the proof of Theorem~\ref{ExistenceAlfaNu} and also in Section~\ref{ZvetseniGrupy}.

\begin{lem}\label{neat3}
For each $j\in I$, choose a prime $\mk{P}_j$ of $L$ above 
$\wp_j$. Then the inertia group of $\mk{P}_j$ is $\langle \sigma^{p^k/\ram_j}\rangle=\Gal(L/L\cap F_{I-\{i\}})$ and the decomposition
group of $\mk{P}_j$ is $\langle\sigma^{n_j}\rangle=\langle\lambda_j\big|_{L},\sigma^{p^k/\ram_j}\rangle$.
\end{lem}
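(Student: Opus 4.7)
The first assertion is essentially a restatement of Corollary~\ref{neat}(i): the inertia of $\mathfrak P_j$ in $\Gamma$ coincides with $\Gal(L/L\cap F_{I-\{j\}})$, and since $\Gamma$ is cyclic of order $p^\exponent$, this subgroup is the unique one of order $\ram_j$, namely $\langle\sigma^{p^\exponent/\ram_j}\rangle$. So the real work is the identification of the decomposition group.

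The plan is to compute the decomposition group first in the bigger abelian extension $F_I/K$ and then restrict to $L$. Pick any prime $\wt{\mk P}_j$ of $F_I$ lying above $\mk P_j$. Because $F_I/K$ is abelian (Proposition~\ref{LemmaPrimySoucin}), its decomposition group $D_j\subseteq G$ does not depend on the choice of $\wt{\mk P}_j$, and its inertia subgroup is $\Gal(F_I/F_{I-\{j\}})$ (recall $F_{I-\{j\}}$ is the maximal subextension of $F_I/K$ in which $\wp_j$ is unramified). Restricting elements of $G$ to $F_{I-\{j\}}$ gives a surjection $G\to\Gal(F_{I-\{j\}}/K)$ whose kernel is exactly $\Gal(F_I/F_{I-\{j\}})$; under it $D_j$ maps onto the decomposition group of $\wp_j$ in the unramified extension $F_{I-\{j\}}/K$, which is generated by $\bigl(\frac{F_{I-\{j\}}/K}{\wp_j}\bigr)$.

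Now use the direct product decomposition \eqref{PrimySoucin}: restriction to $F_{I-\{j\}}$ identifies the complementary factor $\Gal(F_I/F_j)$ isomorphically with $\Gal(F_{I-\{j\}}/K)$, and under this identification the element $\lambda_j$ (trivial on $F_j$, equal to the Artin symbol on $F_{I-\{j\}}$) corresponds exactly to $\bigl(\frac{F_{I-\{j\}}/K}{\wp_j}\bigr)$. Hence $\lambda_j\in D_j$, and since inertia together with a Frobenius lift generates a decomposition group in a Galois extension, we obtain
$$D_j=\langle \lambda_j,\Gal(F_I/F_{I-\{j\}})\rangle.$$

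Finally, since $L\subseteq F_I$, the decomposition group of $\mk P_j$ in $\Gamma$ is the image of $D_j$ under restriction to $L$, giving
$$\langle\lambda_j\big|_L,\,\Gal(L/L\cap F_{I-\{j\}})\rangle=\langle\lambda_j\big|_L,\sigma^{p^\exponent/\ram_j}\rangle.$$
As the cyclic group $\Gamma$ has a unique subgroup of each order, this decomposition group of index $\dec_j$ must coincide with $\langle\sigma^{\dec_j}\rangle$, proving the lemma. The only subtle point is the identification of $\lambda_j$ with the Frobenius lift inside $D_j$, which is why the direct product decomposition from Proposition~\ref{LemmaPrimySoucin} is indispensable; everything else is bookkeeping in a cyclic group.
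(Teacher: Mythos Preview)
Your argument is correct and follows essentially the same strategy as the paper: both identify the inertia via Corollary~\ref{neat}(i), recognize $\lambda_j$ as a Frobenius lift using its defining property $\lambda_j\big|_{F_{I-\{j\}}}=\bigl(\tfrac{F_{I-\{j\}}/K}{\wp_j}\bigr)$, and conclude by the uniqueness of subgroups of a given index in the cyclic group $\Gamma$. The only cosmetic difference is that you first compute the decomposition group $D_j$ up in $G=\Gal(F_I/K)$ and then restrict to $L$, whereas the paper bypasses this detour: it simply observes that $L\cap F_{I-\{j\}}$ is the maximal subextension of $L/K$ unramified at $\wp_j$, so the Frobenius there is $\lambda_j\big|_{L\cap F_{I-\{j\}}}$ by functoriality of the Artin symbol, and hence the decomposition group of $\mk P_j$ in $\Gamma$ is $\langle\lambda_j\big|_L,\sigma^{p^k/\ram_j}\rangle$ directly. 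Your route through $F_I$ and the direct product decomposition \eqref{PrimySoucin} is a perfectly valid alternative, just slightly longer; the paper's version avoids having to argue that $\lambda_j\in D_j$ (which in your write-up is a small, though standard, step).
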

\begin{proof}
The first part was already proved in Corollary~\ref{neat}(i). It thus follows that 
the maximal subextension of $L/K$ which is unramified 
at $\wp_j$ is $L\cap F_{I-\{j\}}/K$. In particular, the Frobenius automorphism of $\wp_j$ 
in $L\cap F_{I-\{j\}}/K$ is equal to 
$\bigl(\frac{L\cap F_{I-\{j\}}/K}{\wp_j}\bigr)=\lambda_j\big|_{L\cap F_{I-\{j\}}}$ and thus
$\langle\lambda_j\big|_{L},\sigma^{p^k/t_j}\rangle$ is equal to the 
decomposition group of $\mk{P}_j$. Moreover, by definition of $n_j$, we have
$[\Gamma:\langle\lambda_j\big|_{L},\sigma^{p^k/t_j}\rangle]=n_j$. Finally, since 
$\langle\sigma^{\dec_j}\rangle$ is the only subgroup of $\Gamma$ of index $\dec_j$, this forces
$\langle\sigma^{\dec_j}\rangle=\langle\lambda_j\big|_{L},\sigma^{p^k/t_j}\rangle$.
\end{proof}

The algebraic numbers $\eta_J$ defined in \eqref{DefiniceEtaF} satisfy the following norm relations which 
can be derived from \cite[Proposition~3 on page 5]{H}: for each $J \subseteq I$ and each $j\in I$ such that $\{j\}\subsetneq J$,
\begin{equation}\label{Relace}
\N_{F_J/F_{J-\{j\}}}(\eta_J)=\eta_{J -\{j\}}^{1-\lambda_j^{-1}},
\end{equation}
and for each $j \in I$,
\begin{equation}\label{JinaRelace}
\N_{F_j/K}(\eta_{\{j\}}) = \N_{H/K}\Bigl(\frac{\Delta(\mathcal{O}_K)}{\Delta(\wp_j)}\Bigr)^{f_I},
\end{equation}
where $\Delta$ is the discriminant Delta function which appears in Section 2.1 of \cite{H}.
It follows from \cite[Proposition 1 on page 3]{H} that $\N_{F_j/K}(\eta_{\{j\}})$ generates
the ideal $\wp_j^{12hf_I}=(\pi_j\mathcal{O}_K)^{12f_I}$, hence
\begin{equation}\label{NormaEtaDole}
\N_{F_j/K}(\eta_{\{j\}})=\xi_j\pi_j^{12f_I},
\end{equation}
for some $\xi_j\in \mu_K$.

The next lemma gives an exact description of the roots of unity in $F_I$. In particular, it will allow us in the sequel to replace $\mu_{F}$ by
$\mu_{K}$ for any subfield $K\subseteq F\subseteq F_I$. 
\begin{lem}\label{OdmocninyZ1}
We have $\mu_{F_I}=\mu_K$.
\end{lem}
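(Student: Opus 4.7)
My plan is to reduce the claim to a degree constraint combined with a ramification argument. Since $\mu_{F_I}$ is cyclic, it suffices to show that any primitive $\ell^a$-th root of unity $\zeta\in F_I$ already lies in $K$. The central observation is that $K(\zeta)\subseteq F_I$, so $[K(\zeta):K]$ must divide $[F_I:K]$, which is a power of $p$ by~\eqref{PrimySoucin} in Proposition~\ref{LemmaPrimySoucin}. On the other hand $[K(\zeta):K]=\phi(\ell^a)/[\Q(\zeta)\cap K:\Q]$, where the denominator is $1$ or $2$ because $K/\Q$ is quadratic.

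I would then split on~$\ell$. The case $\ell=2$ is immediate: $\phi(2^a)$ is a power of~$2$, so the only odd-$p$-power it can equal is~$1$, forcing $\zeta\in K$. For odd~$\ell$, writing $\phi(\ell^a)=\ell^{a-1}(\ell-1)$, the $p$-power constraint forces the prime-to-$p$ part of $\ell^{a-1}(\ell-1)$ to divide $[\Q(\zeta)\cap K:\Q]\le 2$. When $\ell\neq p$ this immediately gives $a=1$; then either $\ell-1$ itself is an odd-$p$-power (impossible, since $\ell-1$ is even while any positive power of an odd prime is odd), or $K$ coincides with the unique quadratic subfield of $\Q(\zeta_\ell)$, which for imaginary~$K$ forces $K=\Q(\sqrt{-\ell})$ with $\ell\equiv 3\pmod 4$. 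The case $\ell=p$ is analogous: since $\gcd(p,p-1)=1$ the degree constraint forces $p-1\mid [\Q(\zeta)\cap K:\Q]\le 2$, hence $p=3$ and $K=\Q(\sqrt{-3})$.

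The finishing step, and where care is needed, is a ramification argument that eliminates each surviving possibility. If $\ell=p=3$ and $K=\Q(\sqrt{-3})$, then $p$ ramifies in $K/\Q$, contradicting the hypothesis that $p$ is unramified in $K/\Q$. If $\ell\neq p$ and $K=\Q(\sqrt{-\ell})$, then $\ell$ is ramified in $K/\Q$, so $\ell\mathcal{O}_K=\wp^2$ for a unique prime~$\wp$ of~$K$ above~$\ell$. Assuming $\zeta_\ell\notin K$ (the only interesting case), $K(\zeta_\ell)/K$ is a non-trivial extension ramified at~$\wp$; since $K(\zeta_\ell)\subseteq F_I$ and $F_I/K$ ramifies only at $\wp_1,\dots,\wp_s$, we must have $\wp=\wp_j$ for some~$j$, forcing $\ell=q_j$ and contradicting the hypothesis that each $q_j$ is unramified in $K/\Q$. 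I expect the main subtlety to be this dovetailing between the $p$-power degree constraint coming from $F_I/K$ and the two unramified-in-$K$ hypotheses (for $p$ and for each $q_j$); the proof appears to use all of these assumptions essentially.
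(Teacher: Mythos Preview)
Your proof is correct. Both your argument and the paper's hinge on the same two ingredients---the $p$-power constraint on $[K(\zeta):K]$ coming from $K(\zeta)\subseteq F_I$, and a ramification contradiction with the standing hypotheses on $p$ and the $q_j$---but the organization differs. You reduce at the outset to primitive $\ell^a$-th roots and then split into the cases $\ell=2$, $\ell=p$, and odd $\ell\neq p$, computing $\phi(\ell^a)/[\Q(\zeta)\cap K:\Q]$ explicitly in each. The paper instead argues by contradiction with a general $\zeta\in\mu_{F_I}\setminus\mu_K$: since $K(\zeta)/\Q$ is abelian of degree $2\cdot p^r$ (twice a $p$-power), it has a \emph{unique} quadratic subfield, which must be $K$; hence $K\subseteq\Q(\zeta)=K(\zeta)$ and $\Q(\zeta)$ itself has a unique quadratic subfield, forcing $\Q(\zeta)$ to be a prime-power cyclotomic field with a single totally ramified prime~$\ell$. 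The ramification endgame is then the same as yours. The paper's route is a bit slicker in that the ``unique quadratic subfield'' observation collapses your case analysis into one stroke; your route is more explicit and makes the use of each hypothesis (in particular the separate roles of ``$p$ unramified in $K/\Q$'' and ``each $q_j$ unramified in $K/\Q$'') more transparent. A minor remark: where you invoke that $F_I/K$ ramifies only at $\wp_1,\dots,\wp_s$, the paper instead routes this through Corollary~\ref{neat}(ii) (that $F_I/L$ is unramified); both are available at this point, and your choice is arguably more direct since the conductor of $F_I/K$ is already identified as $\mathfrak{m}_I$.
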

\begin{proof}
We do a proof by contradiction. 
Let $\zeta$ be a root of unity in $F_I$ which is not in $K$. In particular, we must have 
$2|[\Q(\zeta):\Q]$ and $[K(\zeta):K]>1$. Using the fact that $p$ is odd, we see that $[K(\zeta):\Q]$ is equal to twice a power of $p$
which implies that $K$ is the only quadratic subfield of $K(\zeta)$. Since $\Q(\zeta)\subseteq K(\zeta)$ and $\Q(\zeta)$ contains
at least one quadratic subfield, we also deduce that
(i) $K$ is the only quadratic subfield of $\Q(\zeta)$ and (ii) $K(\zeta)=\Q(\zeta)$. From (i), it follows that 
there is exactly one prime, say $\ell$, which ramifies in $\Q(\zeta)/\Q$ and that its ramification is total. In particular,
since $K\subseteq\Q(\zeta)\subseteq F_{I}$ and $[\Q(\zeta):K]>1$, the prime $\ell$ must also ramify in $[F_{I}:K]$.
From Corollary \ref{neat} (ii), we know that $F_I/L$ is unramified, and therefore, $\ell$ must also ramify in $L/K$. 
We thus have shown the existence of a rational prime $\ell$ which ramifies in both $K/\Q$ and $L/K$; this contradicts our initial assumptions
on the ramification of the extension $L/\Q$.
\end{proof} 

\begin{defin}\label{tup}
We define the group of elliptic numbers $\GrEllNum_{F_I}$ of $F_I$ to be the $\Z[G]$-submodule of $F_I^\times$ generated by the group of roots of unity 
$\mu_{F_{I}}$ ($=\mu_K$ by Lemma \ref{OdmocninyZ1}) and by $\eta_J$ for all $\emptyset\ne J\subseteq I$. 
The group of elliptic units $\GrEllUn_{F_I}$ of $F_I$ is defined as the intersection $\GrEllUn_{F_I}=\GrEllNum_{F_I}\cap\mathcal{O}_{F_I}^\times$.
The group of elliptic numbers $\GrEllNum_L$ of $L$ is defined as the $\Z[\Gamma]$-submodule of $L^\times$ generated by the group of roots of unity $\mu_{L}$ ($=\mu_{K}$) 
and by $\N_{F_J/F_J\cap L}(\eta_J)$ for all $\emptyset\ne J\subseteq I$. Finally, the group of elliptic units $\GrEllUn_L$ of $L$ is defined
as the intersection $\GrEllUn_L=\GrEllNum_L\cap\mathcal{O}_{L}^\times$.
\end{defin}

Let $M$ be a finite abelian extension of $K$. In \cite[Definition 3 on page~7]{H}, Oukhaba's introduced a group of units in $\ca{O}_M$ 
which we denote by $C_M$. The groups $\ca{C}_{F_I}$ and $\ca{C}_{L}$ which appear in Definition \ref{tup} differ slightly from the
groups $C_{F_I}$ and $C_L$, respectively. Using the key fact that $F_I\cap H=K$ one may check that
$C_{F_I}=\langle \mu_K\cup\{\epsilon^h:\epsilon\in \ca{C}_{F_I}\}\rangle$. Similarly, since  $L\cap H=K$, one may also check that
$C_{L}=\langle \mu_K\cup\{\epsilon^h:\epsilon\in \ca{C}_{L}\}\rangle$.
The two previous equalities will be used in the proof of the following lemma:

\begin{lem}\label{EliptickeJednotky}
(i) The indices of $\GrEllUn_{F_I}$ in $\mathcal{O}_{F_I}^\times$ and $\GrEllUn_L$ in $\mathcal{O}_{L}^\times$ are finite and are 
given explicitly by 
\begin{align*}
[\mathcal{O}_{F_I}^\times:\GrEllUn_{F_I}]&=(12w_Kf_I)^{[F_I:K]-1}\cdot\frac{h_{F_I}}{h},\\
[\mathcal{O}_{L}^\times:\GrEllUn_L]&=(12w_Kf_I)^{[L:K]-1}\cdot\frac{h_{L}}{h\cdot[L:\widetilde L]},
\end{align*}
where $h_{F_I}$, $h_L$, and $h$ are the class numbers of $F_I$, $L$, and $K$, respectively, and $\widetilde L$ is a 
maximal subfield of $L$ containing 
$K$ such that $\widetilde L/K$ is ramified in at most one prime ideal of $K$. Note that such a field $\widetilde{L}$ is unique (and thus well-defined)
since $\Gamma=\Gal(L/K)$ is a cyclic group of a prime power order.

(ii) For any $\beta\in\GrEllNum_{F_I}$ we have $\beta\in\GrEllUn_{F_I}$ if and only if $\N_{F_I/K}(\beta)\in\mu_K$.
\end{lem}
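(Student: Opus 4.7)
The plan is to deduce (i) from Oukhaba's index formulas for the groups $C_{F_I}$ and $C_L$ of \cite{H}, combined with the identities $C_{F_I}=\langle\mu_K\cup\{\epsilon^h:\epsilon\in\GrEllUn_{F_I}\}\rangle$ and $C_L=\langle\mu_K\cup\{\epsilon^h:\epsilon\in\GrEllUn_L\}\rangle$ that were recalled just before the lemma. By Lemma~\ref{OdmocninyZ1} we have $\mu_{F_I}=\mu_K$ and hence also $\mu_L=\mu_K$ (since $L\subseteq F_I$), so Dirichlet's unit theorem applied to the totally complex fields $F_I$ and $L$ presents $\mathcal{O}_{F_I}^\times/\mu_K$ and $\mathcal{O}_L^\times/\mu_K$ as free abelian groups of ranks $[F_I:K]-1$ and $[L:K]-1$. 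Oukhaba's formulas ensure that $\GrEllUn_{F_I}$ and $\GrEllUn_L$ are of finite index in the full unit groups, so their quotients modulo $\mu_K$ are free of the same ranks; reading the identities modulo $\mu_K$ then says $C_{F_I}/\mu_K=h\cdot(\GrEllUn_{F_I}/\mu_K)$ and similarly for $L$, giving subindices $h^{[F_I:K]-1}$ and $h^{[L:K]-1}$. Dividing Oukhaba's index formulas by these subindices produces the two formulas in (i); the factor $[L:\widetilde L]$ in the $L$-denominator is directly inherited from Oukhaba's formula for $C_L$ and reflects the partial ramification of $L/K$.

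For (ii), the direction $\Rightarrow$ is immediate because $\mathcal{O}_K^\times=\mu_K$. For the converse, write a typical $\beta\in\GrEllNum_{F_I}$ as $\beta=\zeta\prod_{\emptyset\ne J\subseteq I}\eta_J^{\alpha_J}$ with $\zeta\in\mu_K$ and $\alpha_J\in\Z[G]$, and first evaluate $\N_{F_I/K}(\beta)$ explicitly. Iterating \eqref{Relace} along the chain $J\supseteq J\setminus\{j_1\}\supseteq\cdots\supseteq\{i\}$ and using that $G$ is abelian (so that partial norms commute with each $\lambda_j^{-1}\in G$) yields
\[\N_{F_J/F_{\{i\}}}(\eta_J)=\eta_{\{i\}}^{\prod_{j\in J\setminus\{i\}}(1-\lambda_j^{-1})}.\]
When $|J|\geq 2$, the augmentation of $\prod_{j\ne i}(1-\lambda_j^{-1})$ in $\Z[\Gal(F_{\{i\}}/K)]$ is $\prod_{j\ne i}(1-1)=0$, so one more application of $\N_{F_{\{i\}}/K}$ gives $\N_{F_J/K}(\eta_J)=1$, and consequently $\N_{F_I/K}(\eta_J^{\alpha_J})=1$. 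For singletons, \eqref{NormaEtaDole} together with the Galois invariance of the norm gives $\N_{F_I/K}(\eta_{\{j\}}^{\alpha_{\{j\}}})=(\xi_j\pi_j^{12f_I})^{[F_I:F_{\{j\}}]\cdot a(\alpha_{\{j\}})}$, where $a:\Z[G]\to\Z$ denotes the augmentation. Since the ideals $\pi_j\mathcal{O}_K=\wp_j^h$ are supported at pairwise distinct primes of $K$, the hypothesis $\N_{F_I/K}(\beta)\in\mu_K$ forces $a(\alpha_{\{j\}})=0$ for every $j\in I$.

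It remains to show these vanishing-augmentation conditions imply $\beta\in\mathcal{O}_{F_I}^\times$. For $|J|\geq 2$, the element $\eta_J$ is already a global unit of $F_J$ -- a classical property of elliptic units attached to a modulus divisible by more than one prime, traceable to \cite[Prop.~1]{H} -- so $\eta_J^{\alpha_J}$ contributes trivially to the divisor of $\beta$. For each $j\in I$, Corollary~\ref{neat}(i) combined with $e(\mathfrak{P}/\wp_j)=t_j=e(\mathfrak{Q}_j/\wp_j)$ (where $\mathfrak{Q}_j$ is the unique prime of $F_{\{j\}}$ above $\wp_j$ and $\mathfrak{P}$ any prime of $F_I$ above $\wp_j$) shows that $F_I/F_{\{j\}}$ is unramified at $\mathfrak{Q}_j$; together with \eqref{NormaEtaDole} this forces $v_{\mathfrak{P}}(\eta_{\{j\}})=12f_Ih$ for every $\mathfrak{P}\mid\wp_j$ in $F_I$. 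Hence $\operatorname{div}(\eta_{\{j\}})$ in $F_I$ is the $G$-invariant divisor $12f_Ih\sum_{\mathfrak{P}\mid\wp_j}\mathfrak{P}$, so $\alpha_{\{j\}}\cdot\operatorname{div}(\eta_{\{j\}})=a(\alpha_{\{j\}})\cdot\operatorname{div}(\eta_{\{j\}})=0$. Summing over $j$ gives $\operatorname{div}(\beta)=0$, i.e.\ $\beta\in\mathcal{O}_{F_I}^\times$.

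The main obstacle I expect is making the assertion that $\eta_J$ is a global unit for $|J|\geq 2$ fully self-contained; the standard justification proceeds through the divisor calculus of Siegel functions at CM points recorded in \cite{H}. Once that fact is granted, everything else distils to the bookkeeping above: Dirichlet's unit theorem, Oukhaba's index formulas, and the identity $a(1-\lambda_j^{-1})=0$ on the augmentation.
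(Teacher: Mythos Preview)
Your plan for (i) matches the paper's exactly: compare $\GrEllUn_F$ with Oukhaba's group $C_F$ via the $h$-th power relation, compute the subindex $[\GrEllUn_F:C_F]=h^{[F:K]-1}$, and then invoke \cite[Theorem~1]{H}. However, you gloss over what is in fact the main content of the argument. Oukhaba's formula (equation~\eqref{tip} in the paper) contains two auxiliary factors beyond the ones you account for, namely
\[
\frac{\prod_{\mathfrak{p}}[F\cap K_{\mathfrak{p}^\infty}:F\cap H]}{[F:F\cap H]}
\qquad\text{and}\qquad
\frac{(R_F:U_F)}{d(F)}.
\]
The paper explicitly checks that the first equals $1$ for $F=F_I$ (this uses Proposition~\ref{LemmaPrimySoucin} in an essential way) and equals $\tfrac{1}{[L:\widetilde L]}$ for $F=L$ (this uses that $\prod_{\mathfrak p}[L\cap K_{\mathfrak p^\infty}:K]=[\widetilde L:K]$, which is a consequence of the definition of $\widetilde L$ and the cyclicity of $\Gamma$). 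It also checks that the second factor equals $1$ in both cases by appealing to \cite[Theorems~5.3 and 5.4]{Si} and \cite[Remark~2]{H}. Your phrase ``directly inherited from Oukhaba's formula'' is not a substitute for these verifications; you should supply them.

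For (ii) you take a different but correct route. The paper first observes (via \cite[Corollary~2]{H} and \eqref{NormaEtaDole}) that every element of $\GrEllNum_{F_I}$ can be written as $\gamma\cdot\prod_j\eta_{\{j\}}^{c_j}$ with $\gamma\in\GrEllUn_{F_I}$ and \emph{integer} exponents $c_j$, so the question reduces immediately to the multiplicative independence of $\pi_1,\dots,\pi_s$. You instead keep the full $\Z[G]$-exponents $\alpha_J$, compute $\N_{F_I/K}(\eta_J)=1$ for $|J|\ge2$ from the iterated norm relations~\eqref{Relace}, and extract the relevant integer from $\alpha_{\{j\}}$ via the augmentation map. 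Your concluding divisor argument is correct: since $\eta_{\{j\}}\in F_{\{j\}}$ and the unique prime of $F_{\{j\}}$ above $\wp_j$ is $\Gal(F_{\{j\}}/K)$-stable, $\operatorname{div}_{F_I}(\eta_{\{j\}})$ is $G$-invariant and hence killed by any augmentation-zero element of $\Z[G]$. The paper's route is a bit shorter because the upfront reduction to integer exponents makes the divisor bookkeeping unnecessary, but both arguments rest on the same external input (that $\eta_J$ is a unit for $|J|\ge2$), which you rightly flag as the one fact needing an outside reference; in the paper this is \cite[Corollary~2 on p.~5]{H}.
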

\begin{proof}
It follows from \cite[Theorem 1]{H} that Oukhaba's group of elliptic units is of finite index in the full group of units, 
and so, from the discussion before Lemma~\ref{EliptickeJednotky}, we obtain that $[\ca{C}_{F_I}:C_{F_I}]=h^{[F_I:K]-1}$ and $[\ca{C}_{L}:C_{L}]=h^{[L:K]-1}$.
For a finite abelian extension $F/K$, an index formula for $[\mathcal{O}_{F}^\times:C_{F}]$ is given in \cite[Theorem 1]{H}.
It is formed by the product of four quotients, which we write here, using Oukhaba's notation:
\begin{align}\label{tip}
[\mathcal{O}_{F}^\times:C_{F}]=\frac{(12w_Kf_Ih)^{[F:K]-1}}{w_{F}/w_K}\cdot\frac{h_{F}}{h}
\cdot\frac{\prod_{\mathfrak{p}}[F\cap K_{\mathfrak{p^\infty}}:F\cap H]}{[F:F\cap H]}\cdot\frac{(R_F:U_F)}{d(F)}.
\end{align}
The two formulae in (i) follow from 
Lemma~\ref{OdmocninyZ1} and an explicit
computation of the third and the fourth quotient in \eqref{tip} when $F=F_I$ and $F=L$. 
Let us start by computing the third quotient.
The product is taken over all prime ideals $\mathfrak{p}$ of $K$, and $K_{\mathfrak{p^\infty}}$ means the union of the ray class fields of 
$K$ of modulus $\mathfrak{p}^n$ for all positive integers $n$. 
Since $[F_I:K]$ and $[L:K]$ are powers of $p$ and $p\nmid h$, we have
$F_I\cap H=L\cap H=K$. 
Moreover, by definition of $F_I$, we have $F_I\cap K_{\mathfrak{p^\infty}}=F_{\{j\}}$ if $\mathfrak{p}=\wp_j$ 
and $F_I\cap K_{\mathfrak{p^\infty}}=F_I\cap K_{\mathfrak{p^\infty}}\cap H=K$ if $\mathfrak{p}\notin\{\wp_1,\dots,\wp_s\}$. Combining the previous two observations with
Proposition~\ref{LemmaPrimySoucin}, we obtain that the third quotient is equal to $1$ when $F=F_I$. In the case where
$F=L$, the definition of $\wt{L}$ readily implies that $\prod_{\mathfrak{p}}[L\cap K_{\mathfrak{p^\infty}}:K]=[\widetilde L:K]$,
so that the third quotient is equal to $\frac{1}{[L:\wt{L}]}$. Let us now handle the fourth quotient in \eqref{tip}. It follows 
from \cite[Theorem 5.4]{Si} and Proposition~\ref{LemmaPrimySoucin}
that $(R_F:U_F)=1$ if $F=F_I$. Similarly, it follows from  \cite[Theorem 5.3]{Si} that $(R_F:U_F)=1$
if $F=L$. Finally $d(F_I)=d(L)=1$ by \cite[Remark 2]{H}. 

Let us prove (ii). Let $\beta\in\GrEllNum_{F_I}$.
By \cite[Corollary~2 on page 5]{H} we know that $\eta_J\in\mathcal{O}_{F_J}^\times$ if $|J|>1$ and by \eqref{NormaEtaDole} we know that 
$\eta_{\{j\}}\in\mathcal{O}_{F_j}$ is a generator of a power of the only prime of $F_j$ above $\wp_j$ which ramifies totally in $F_j/K$. 
Hence for any $\tau\in\Gal(F_j/K)$, $\eta_{\{j\}}^{1-\tau}\in\mathcal{O}_{F_j}^\times$. 
Therefore, there is $\gamma\in\GrEllUn_{F_I}$ and $c_1,\dots,c_s\in\Z$ such that $\beta=\gamma\cdot\prod_{j=1}^s\eta_{\{j\}}^{c_j}$. 
Since $\wp_1,\dots,\wp_s$ are different prime ideals, the elliptic numbers $\eta_{\{1\}},\dots,\eta_{\{s\}}$ are multiplicatively independent. 
Hence $\beta\in\GrEllUn_{F_I}$ if and only if $c_1=\dots=c_s=0$. 
Using \eqref{NormaEtaDole} we see that $\N_{F_I/K}(\beta)=\xi\cdot\prod_{j=1}^s\pi_j^{12f_I[F_I:F_j]c_j}$ for some $\xi\in\mu_K$ and 
the lemma follows due to the fact that $\pi_1,\dots,\pi_s$ are multiplicatively independent.
\end{proof}

Recall that $G=\Gal(F_I/K)$.
In \cite{SM}, a $\Z[G]$-module $U$ was introduced which depended solely on the following set of parameters: 
$T_1,\dots,T_v$ and $\lambda_1,\dots,\lambda_v$.
(Warning: Here the module $U$ has a different meaning than in the proof of Lemma~\ref{EliptickeJednotky}, where we used the notations of \cite{H} and \cite{Si}.)
In our situation we put $v=s$ and we set $T_j=\Gal(F_I/F_{I-\{j\}})$ and $\lambda_j\in G$ to be the automorphism defined in the beginning of Section~\ref{sec_2} for each $j=1,\dots,s$. 
For our purpose, it is enough to recall that $U$ was defined explicitly as a certain $\Z[G]$-submodule of $\Q[G]\oplus \Z^s$,
with the following set of $\Z[G]$-generators $U=\langle\rho_J;\,J\subseteq I\rangle_{\Z[G]}$.
Here each $\Z$ summand in $\Q[G]\oplus \Z^s$ is endowed with the trivial $G$-action and 
each element of the standard basis of $\Z^s$ is denoted by $e_j$ (for $j\in I$). Note that by construction $U$
is a finitely generated $\Z$-module with no $\Z$-torsion which implies that $U$ is a free $\Z$-module of finite rank.

The next lemma describes the $\Z[G]$-module structure of $\GrEllNum_{F_I}$ in terms of the $\Z[G]$-module $U$.
For any subset $A\subseteq G$, we let $s(A)=\sum_{a\in A}a\in\Z[G]$.
\begin{lem}\label{Izomorfismus} 
The $\Z[G]$-modules $\GrEllNum_{F_I}/\mu_K$ and $U/(s(G)\Z)$ are isomorphic. More precisely, if we set $\Psi(\eta_J)=\rho_{I-J}$ 
for each $J\subseteq I$, $J\ne\emptyset$, and $\Psi(\mu_K)=0$, then it defines a $\Z[G]$-module homomorphism $\Psi:\GrEllNum_{F_I}\to U$
which satisfies $\ker\Psi=\mu_K$ and $U=\Psi(\GrEllNum_{F_I})\oplus(s(G)\Z)$.
\end{lem}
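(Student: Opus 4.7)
The plan is to define $\Psi$ on the $\Z[G]$-generators of $\GrEllNum_{F_I}$, match the defining relations of $\GrEllNum_{F_I}/\mu_K$ with those of $U$, and then analyse the kernel and image separately to deduce the isomorphism $\GrEllNum_{F_I}/\mu_K\cong U/(s(G)\Z)$.

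\emph{Well-definedness.} By Definition~\ref{tup}, $\GrEllNum_{F_I}$ is the $\Z[G]$-module generated by $\mu_K$ and the $\eta_J$ for $\emptyset\ne J\subseteq I$. All $\Z[G]$-relations among these generators are consequences of (a) torsion in $\mu_K$; (b) the norm relations \eqref{Relace}; and (c) the single-set relation \eqref{JinaRelace} rewritten via \eqref{NormaEtaDole}. In multiplicative $\Z[G]$-notation, and using the fact (coming from Proposition~\ref{LemmaPrimySoucin}) that the restriction map $T_j=\Gal(F_I/F_{I-\{j\}})\to\Gal(F_J/F_{J-\{j\}})$ is an isomorphism, relation~\eqref{Relace} reads $s(T_j)\cdot\eta_J=(1-\lambda_j^{-1})\cdot\eta_{J-\{j\}}$, while \eqref{JinaRelace} together with \eqref{NormaEtaDole} reads $s(T_j)\cdot\eta_{\{j\}}=\xi_j\pi_j^{12f_I}$. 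By its construction in \cite{SM} from the parameters $T_j$ and $\lambda_j$, $U$ is the $\Z[G]$-module on generators $\rho_J$ subject to parallel relations: $s(T_j)\rho_{J'}=(1-\lambda_j^{-1})\rho_{J'\cup\{j\}}$ whenever $j\notin J'$ and $J'\ne\emptyset$, together with a relation expressing $s(T_j)\rho_{I-\{j\}}$ as an explicit integer multiple of $e_j\in\Z^s$ reflecting $\pi_j^{12f_I}$. Putting $J'=I-J$, the assignment $\mu_K\mapsto0$, $\eta_J\mapsto\rho_{I-J}$ therefore extends uniquely to a $\Z[G]$-module homomorphism $\Psi\colon\GrEllNum_{F_I}\to U$.

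\emph{Kernel and image.} Clearly $\mu_K\subseteq\ker\Psi$. Conversely, pick $\beta\in\ker\Psi$ and write $\beta=\zeta\prod_{J\ne\emptyset}\eta_J^{\alpha_J}$ with $\zeta\in\mu_K$ and $\alpha_J\in\Z[G]$. Since $U$ is universal for the relations (b) and (c), the vanishing $\sum_J\alpha_J\rho_{I-J}=0$ can be achieved by applying exactly those relations, and the same manipulations on the elliptic-numbers side reduce $\beta$ to an element of $\mu_K$. A clean way to make this rigorous is by induction on $|J|$: the multiplicative independence of $\eta_{\{1\}},\dots,\eta_{\{s\}}$ over $\mu_K$ (established at the end of the proof of Lemma~\ref{EliptickeJednotky}) together with the valuations at $\wp_j$ pins down each $\alpha_{\{j\}}$, and \eqref{Relace} handles the higher-level $\alpha_J$'s. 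For the direct-sum assertion, the image of $\Psi$ already contains every $\rho_{I-J}$ with $\emptyset\ne J\subseteq I$, i.e.\ every $\rho_{J'}$ with $J'\subsetneq I$; together with $s(G)\Z$ these generate $U$, and their intersection is trivial, both statements following directly from the explicit coordinate description of $U\subseteq\Q[G]\oplus\Z^s$ in \cite{SM}.

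\emph{Main obstacle.} The whole argument hinges on the presentation-matching: verifying that the abstract relations defining $U$ coincide relation-by-relation with the concrete norm identities \eqref{Relace}, \eqref{JinaRelace} (after translating norms in $F_J/F_{J-\{j\}}$ to the $s(T_j)$-action via Proposition~\ref{LemmaPrimySoucin}), and that there are no hidden multiplicative dependencies among the $\eta_J$ beyond those listed in (a)--(c). The independence input from the proof of Lemma~\ref{EliptickeJednotky} together with the functoriality of restriction among the inertia groups handles both points; once they are in place, the kernel and direct-sum assertions follow by a routine diagram chase.
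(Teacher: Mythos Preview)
Your argument has a directional gap in the well-definedness step. You assert that ``all $\Z[G]$-relations among these generators are consequences of (a), (b), (c)'' and then conclude that the assignment $\eta_J\mapsto\rho_{I-J}$ extends to a homomorphism $\Psi:\GrEllNum_{F_I}\to U$. But matching the norm identities \eqref{Relace}, \eqref{NormaEtaDole} against the presentation of $U$ only shows that the $\eta_J$ \emph{satisfy} the defining relations of $U$; this gives a well-defined map in the opposite direction, $U\to\GrEllNum_{F_I}/\mu_K$. To get $\Psi$ going from $\GrEllNum_{F_I}$ to $U$ you would need the reverse containment: that every multiplicative relation among the $\eta_J$ (modulo $\mu_K$) already lies in the relation ideal of $U$. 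That is precisely the statement that there are ``no hidden multiplicative dependencies,'' which you flag as the main obstacle but never actually establish. The independence of $\eta_{\{1\}},\dots,\eta_{\{s\}}$ from Lemma~\ref{EliptickeJednotky} only controls the bottom layer; your inductive sketch for the higher $\eta_J$ via \eqref{Relace} is not a proof, since a hypothetical extra relation need not be reducible by \eqref{Relace} alone.

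The paper avoids this entirely by reversing the construction. It first defines $\Phi:U\to\GrEllNum_{F_I}/\mu_K$ by $\rho_J\mapsto\eta_{I-J}$ (well-defined because the $\eta_J$ satisfy the presentation relations of $U$ from \cite[Corollary~1.6(ii)]{SM}), observes that $\Phi$ is surjective and kills $s(G)$, and then proves that the induced surjection $\widetilde\Phi:U/(s(G)\Z)\to\GrEllNum_{F_I}/\mu_K$ is an isomorphism by a \emph{rank count}: both sides are torsion-free over $\Z$, and their $\Z$-ranks agree since $\rank_\Z U=|G|+s$ (from \cite[Remark~1.4]{SM}) while $\rank_\Z(\GrEllNum_{F_I}/\mu_K)=s+\rank_\Z\mathcal{O}_{F_I}^\times=s+|G|-1$ (from Lemma~\ref{EliptickeJednotky} and Dirichlet). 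The map $\Psi$ is then \emph{defined} as the composite $\GrEllNum_{F_I}\twoheadrightarrow\GrEllNum_{F_I}/\mu_K\xrightarrow{\widetilde\Phi^{-1}}U/(s(G)\Z)\hookrightarrow U$, so its well-definedness is automatic and the decomposition $U=\Psi(\GrEllNum_{F_I})\oplus s(G)\Z$ follows from the splitting of $U$ already recorded in \cite{SM}. The rank argument is the missing idea in your attempt; without it (or some equivalent global input) there is no way to rule out extra relations among the $\eta_J$.
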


\begin{proof} It follows from the $\Z[G]$-module presentation of $U$ given in \cite[Corollary~1.6(ii)]{SM}, and the observation that the generator $\rho_I=s(G)$ does not appear 
in the relation \cite[(1.10)]{SM}, that $U=\langle\rho_J;\,J\subsetneq I\rangle_{\Z[G]}\oplus(s(G)\Z)$.
Hence, there exists an embedding of $\Z[G]$-modules $\iota:U/(s(G)\Z)\to U$ such that 
$\im \iota=\langle\rho_J;\,J\subsetneq I\rangle_{\Z[G]}$. In order to define the map $\Psi:\GrEllNum_{F_I}\to U$, 
it is preferable to start by defining its \lq\lq inverse\rq\rq. We define a map
$\Phi:U\rightarrow P_{F_I}$ by setting
$$
\Phi(\rho_J)=\eta_{I-J} \;\;\text{ for each $J\subsetneq I$}\;\;\; 
\mbox{and\; $\Phi(\rho_I)=0$}.
$$
We claim that $\Phi$ is a well-defined $\Z[G]$-module homomorphism whose image together with $\mu_K$ generates $P_{F_I}$. 
Indeed, this follows directly from the $\Z[G]$-module presentation of $U$ given in loc.cit. 
and the norm relation \eqref{Relace}. Since $\Phi(s(G))=0$ and $\langle \Phi(U),\mu_K\rangle=P_{F_I}$, it follows that $\Phi$ induces a surjective $\Z[G]$-module homomorphism
$\wt{\Phi}:U/(s(G)\Z)\rightarrow P_{F_I}/\mu_{K}$. 
Note that $U$ (so a fortiori $U/(s(G)\Z)$ which is embedded in $U$ via $\iota$) and $P_{F_I}/\mu_K$ have no $\Z$-torsion.
Therefore 
in order to show that $\wt{\Phi}$ is a $\Z[G]$-module isomorphism, it is enough to prove that
\begin{align}\label{tij}
\rank_{\Z}(U/(s(G)\Z))=\rank_{\Z}(P_{F_I}/\mu_{K}).
\end{align}
Let us prove \eqref{tij}. Since the prime ideals $\wp_1,\dots,\wp_s$ are distinct, the numbers $\pi_1,\dots,\pi_s$ are multiplicatively independent over $\Z$, 
and Lemma~\ref{EliptickeJednotky} implies that
\begin{align}\label{tij2}
\rank_{\Z}(\GrEllNum_{F_I})=s+\rank_{\Z}(\GrEllUn_{F_I})=s+\rank_{\Z}(\mathcal{O}_{F_I^\times})=s+\tfrac12[F_I:\Q]-1.
\end{align}
Moreover, it follows from \cite[Remark~1.4]{SM} that $\rank_{\Z}(U)=|G|+s$ which, when combined with \eqref{tij2}, proves \eqref{tij}. Finally, we define the map
$\Psi$ as the composition of the three maps 
\begin{align}\label{caf}
P_{F_{I}}\rightarrow P_{F_I}/\mu_K\stackrel{\wt{\Phi}^{-1}}{\rightarrow} U/s(G)\Z\stackrel{\iota}{\rightarrow} U,
\end{align}
where the first map is the natural projection. This proves the existence of $\Psi$ with the desired properties.
\end{proof}

\section{A nontrivial root of an elliptic unit}\label{Odmocnina}

We call the element
\begin{equation}\label{DefiniceEtaL}
\eta=\N_{F_I/L}(\eta_{I})
\end{equation}
the {\it top generator} 
of both the group of elliptic numbers $\GrEllNum_L$ of $L$
and
of the group of elliptic units $\GrEllUn_L$ of $L$.
The aim of this section is to take a nontrivial root \lq\lq $\sqrt[y]{\eta}$ \rq\rq\; of $\eta$ 
(where the root exponent $y$ is a group ring element in $\Z[\Gamma]$) 
such that $\sqrt[y]{\eta}\in L$. We define $\HH=\Gal(F_I/L)\subseteq \Gal(F_I/K)=G$, so that  $\Gamma=\langle\sigma\rangle\cong G/\HH$. 

\begin{lem}\label{Prunik_P_a_L} 
An elliptic number $\beta\in\GrEllNum_{F_I}$ belongs to $L$ if and only if $\Psi(\beta)$ is fixed by $\HH$, i.e.,
$\Psi(\GrEllNum_{F_I})^\HH=\Psi(\GrEllNum_{F_I}\cap L)$,
where $\Psi$ is the $\Z[G]$-module homomorphism introduced in Lemma~\ref{Izomorfismus}.
\end{lem}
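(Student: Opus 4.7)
The plan is to prove the two inclusions $\Psi(P_{F_I}\cap L)\subseteq \Psi(P_{F_I})^B$ and $\Psi(P_{F_I})^B\subseteq\Psi(P_{F_I}\cap L)$ separately. The first inclusion is immediate: if $\beta\in P_{F_I}\cap L$ then every $\tau\in B=\Gal(F_I/L)$ fixes $\beta$, and because $\Psi$ is a $\Z[G]$-module homomorphism we get $\Psi(\beta)^\tau=\Psi(\beta^\tau)=\Psi(\beta)$.

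For the nontrivial inclusion, I would start with $\beta\in P_{F_I}$ such that $\Psi(\beta)$ is fixed by every $\tau\in B$. Applying Lemma~\ref{Izomorfismus}, in particular $\ker\Psi=\mu_K$, the equality $\Psi(\beta^\tau)=\Psi(\beta)^\tau=\Psi(\beta)$ yields $\zeta_\tau:=\beta^{\tau}\beta^{-1}\in\mu_K$ for every $\tau\in B$. Since $\mu_K\subseteq K\subseteq L$ is pointwise fixed by $B$, a short cocycle computation (writing $\beta^{\tau_1\tau_2}=(\zeta_{\tau_1}\beta)^{\tau_2}=\zeta_{\tau_1}\zeta_{\tau_2}\beta$) shows that the assignment $\tau\mapsto\zeta_\tau$ is in fact a group homomorphism $B\to\mu_K$.

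Now I invoke the coprimality of the two orders involved. By Proposition~\ref{LemmaPrimySoucin} and Corollary~\ref{neat}(iii), $G=\Gal(F_I/K)$ is a finite abelian $p$-group, hence so is its subgroup $B$. On the other hand, $\mu_K=\mu_{F_I}$ by Lemma~\ref{OdmocninyZ1}, and the standing assumption that $p$ is unramified in $L/\Q$ guarantees $p\nmid|\mu_K|=w_K$. Therefore the only homomorphism from the $p$-group $B$ to the group $\mu_K$ of order coprime to $p$ is the trivial one, which forces $\zeta_\tau=1$ for every $\tau\in B$. Consequently $\beta^\tau=\beta$ for all $\tau\in B$, giving $\beta\in L$ and hence $\beta\in P_{F_I}\cap L$ with $\Psi(\beta)\in\Psi(P_{F_I}\cap L)$.

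I do not anticipate any real obstacle here: the essential ingredients (the description of $\ker\Psi$, the fact $\mu_{F_I}=\mu_K$, and the $p$-group vs.\ coprime-order dichotomy) have all been set up in the preceding sections; the only thing to be careful about is that $\tau\mapsto\zeta_\tau$ is genuinely a homomorphism, which relies crucially on $B$ acting trivially on $\mu_K$.
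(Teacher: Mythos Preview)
Your proposal is correct and follows essentially the same approach as the paper: both use $\ker\Psi=\mu_K$ to get $\beta^{\tau-1}\in\mu_K$, then exploit that $B$ is a $p$-group while $p\nmid|\mu_K|$ to force this root of unity to be $1$. The only cosmetic difference is that the paper argues pointwise (for each fixed $\tau$ it applies $1+\tau+\dots+\tau^{p^k-1}$ to obtain $\xi^{p^k}=1$), whereas you package the same order considerations into the observation that $\tau\mapsto\zeta_\tau$ is a homomorphism $B\to\mu_K$ and hence trivial.
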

\begin{proof}
Let $\beta\in \GrEllNum_{F_I}$. On the one hand, if $\beta\in L$ then $\beta^{\tau-1}=1$ for all $\tau\in \HH$, and so $(\tau-1)\Psi(\beta)=0$, 
which means $\Psi(\beta)\in\Psi(\GrEllNum_{F_I})^\HH$. On the other hand, if $\Psi(\beta)\in\Psi(\GrEllNum_{F_I})^\HH$ then, for any $\tau\in \HH$, 
we have $(\tau-1)\Psi(\beta)=0$ and so $\beta^{\tau-1}=\xi\in\ker(\Psi)=\mu_K$. Note that $\tau^{p^\exponent}=1$ and $\xi^\tau=\xi$. Therefore,
applying $1+\tau+\dots+\tau^{p^\exponent-1}$ to the equality $\beta^{\tau-1}=\xi$ we find that $1=\xi^{p^\exponent}$. Finally, since
$p\nmid|\mu_K|$, we must have $\xi=1$, and therefore $\beta\in L$.
\end{proof}

Recall from Section \ref{bit}, that $\dec_i$ was defined as the index of the decomposition group of the ideal $\mathfrak{P}_i\subseteq L$ 
in $\Gamma$. Without lost of generality, 
we can suppose that
\begin{equation}\label{Serazeni}
\dec_1\le\dec_2\le\ldots\le\dec_s\qquad\text{and we set}\qquad\dec=\dec_s=\max\{\dec_i;\,i\in I\}.
\end{equation}
Since $p|t_s$ we have $n|p^{k-1}$ and it follows from Corollary \ref{neat}(iii) that we can suppose that $t_1=p^k$ and so $\dec_1=1$.
Let $L'$ be the unique subfield of $L$ containing $K$ such that $[L':K]=\dec$. Note that $\langle\sigma^n\rangle=\Gal(L/L')$
and that $\wp_s$ splits completely in $L'/K$. 

We may now state the main result of this section.
\begin{thm}\label{ExistenceAlfaNu}
There is a unique $\alpha\in  L$ such that $\N_{L/L'}(\alpha)=1$ and such that the elliptic unit $\eta$ defined by \eqref{DefiniceEtaL} 
satisfies $\eta=\alpha^{y}$, where $y=\prod_{i=2}^{s-1}(1-\sigma^{\dec_i})$ (if $s=2$ the empty product is taken to mean $1$). 
This $\alpha$ is an elliptic unit of $F_I$, so that $\alpha\in\GrEllUn_{F_I}\cap L$.
Moreover, there is $\gamma\in L^\times$ such that $\alpha=\gamma^{1-\sigma^\dec}$.
and $\N_{L/K}(\bar\nu)\in\langle \mu_K\cup\{\pi_1,\dots,\pi_s\}\rangle$.
\end{thm}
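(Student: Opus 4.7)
My plan is to construct $\alpha$ by working inside the $\Z[G]$-module $U$ from Lemma~\ref{Izomorfismus} and transferring the resulting identity back to $F_I^\times$. Applying $\Psi$ to $\eta=N_{F_I/L}(\eta_I)$ yields $\Psi(\eta)=s(B)\rho_\emptyset$, where $B=\Gal(F_I/L)$; by Lemma~\ref{Prunik_P_a_L} and the fact that $\ker\Phi=s(G)\Z$, producing $\alpha\in L\cap\GrEllNum_{F_I}$ with $\alpha^y=\eta$ reduces to finding $v\in U^B$ such that $y\cdot v\equiv s(B)\rho_\emptyset\pmod{s(G)\Z}$. A key preliminary observation is that the assumption $t_1=p^k$ (allowed by Corollary~\ref{neat}(iii)) makes the restriction $T_1\to\Gamma$ an isomorphism, so $T_1\cap B=\{1\}$ and therefore $G=T_1\times B$.

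To produce $v$, I would iterate the identity $s(T_j)\rho_K=(1-\lambda_j^{-1})\rho_{K\cup\{j\}}$, which follows directly from the norm relation \eqref{Relace} together with the definition of $\Psi$, to rewrite $s(B)\rho_\emptyset$ in terms of the $\rho_J$ and the $\lambda_j^{-1}$. Lemma~\ref{neat3} is then brought in to convert the resulting action of $\lambda_j$ on elements of $U^B$ (which correspond via $\Psi$ to elements of $L$) into the action of $\sigma^{n_j}$, because the decomposition group at $\wp_j$ in $L$ is $\langle\sigma^{n_j}\rangle=\langle\lambda_j|_L,\sigma^{p^k/t_j}\rangle$. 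Each factor $(1-\lambda_j^{-1})$ thereby contributes a factor of $(1-\sigma^{n_j})$, modulo what is already absorbed: the index $j=1$ is absorbed because $t_1=p^k$ makes $\sigma^{p^k/t_1}=1$ trivial, and the index $j=s$ is absorbed because $n_s=n$ is the full order of the decomposition group cut out by $\sigma^n$. What remains is precisely $y=\prod_{i=2}^{s-1}(1-\sigma^{n_i})$. Executing this bookkeeping, including a careful choice of transversal of $B$ in $G$ to accommodate the fact that $B$ is a \emph{twisted} complement of $T_1$ rather than $\prod_{j\ne 1}T_j$, is the principal technical step.

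Having produced $\alpha\in\GrEllNum_{F_I}\cap L$ with $\alpha^y=\eta$, I would verify $\alpha\in\GrEllUn_{F_I}$ via Lemma~\ref{EliptickeJednotky}(ii) by showing $N_{F_I/K}(\alpha)\in\mu_K$; this reduces to checking that $s(G)\cdot v$ lies in the rank-one submodule $s(G)\Z$ of $U$, which is automatic from the construction of $v$. For uniqueness, any two candidates differ by $\delta\in L^\times$ with $\delta^y=1$, and a succession of Hilbert~90 applications in the cyclic tower cut out by the subgroups $\langle\sigma^{n_i}\rangle$ shows that the normalization $N_{L/L'}(\alpha)=1$ pins down $\alpha$ uniquely. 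The decomposition $\alpha=\gamma^{1-\sigma^n}$ is then an immediate application of Hilbert~90 to the cyclic extension $L/L'$, whose Galois group is $\langle\sigma^n\rangle$. Finally, the assertion about $N_{L/K}$ lying in $\langle\mu_K\cup\{\pi_1,\dots,\pi_s\}\rangle$ follows by pushing the construction of $\alpha$ through \eqref{NormaEtaDole}: norms of the $\eta_{\{j\}}$ are generators of powers of $\pi_j$ times roots of unity, and the construction of $\alpha$ preserves this shape down to $K$. The main obstacle in the whole argument is clearly the explicit combinatorial identification of $v$ in step two, which requires tracking the interaction of the direct product structure $G=\prod_j T_j$ with the non-canonical complement $B$.
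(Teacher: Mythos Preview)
Your plan diverges from the paper's proof at the crucial step and, as written, has a real gap.

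The paper does \emph{not} construct $v$ explicitly. Instead it works in the quotient ring $R=\Z[\Gamma]/N_n\Z[\Gamma]$ (where $y$ becomes a nonzerodivisor), sets $\mathcal M=\{x\in\Psi(\GrEllNum_{F_I})^B;\,N_n x=0\}$, and invokes Proposition~\ref{NulovostExtDelitelnost}: part~(i) (Ext${}^1$ vanishing) lets one extend any $\psi\in\Hom_R(\mathcal M,R)$ to a map $\varphi\in\Hom_{\Z[\Gamma]}(U^B,\Z[\Gamma])$; then the formula of \cite[Corollary~1.7(ii)]{SM}, together with the facts that $\varphi(t_ie_i)=0$ and $1-\lambda_i|_L\in(1-\sigma^{n_i})\Z[\Gamma]$, forces $\varphi(s(B)\rho_\emptyset)\in\prod_{i=1}^s(1-\sigma^{n_i})\Z[\Gamma]$. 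Unwinding through $\gamma$ gives $\psi(s(B)\rho_\emptyset)\in yR$ for every $\psi$, and part~(ii) of Proposition~\ref{NulovostExtDelitelnost} then yields $s(B)\rho_\emptyset\in y\mathcal M$ without ever exhibiting the preimage. Uniqueness is handled not by Hilbert~90 but by the $\Delta_{n_j}$-operator trick of Remark~\ref{Vypocet}, which turns $\beta^y=1$ plus $\beta^{N_n}=1$ into $\beta^r=1$ for a $p$-power $r$.

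Your proposed route---iterating $s(T_j)\rho_K=(1-\lambda_j^{-1})\rho_{K\cup\{j\}}$ to peel factors off $s(B)\rho_\emptyset$---does not get off the ground as stated, because $s(B)$ is \emph{not} a product of the $s(T_j)$'s: $B$ is a complement of $T_1$ in $G$, but in general $B\neq\prod_{j\ge2}T_j$, so there is no first norm relation to apply. You flag this (``twisted complement'') but then defer the entire difficulty to an unexecuted ``principal technical step''. That step is exactly what \cite[Corollary~1.7(ii)]{SM} packages, and the paper's point is that one can bypass any explicit unwinding of it by the duality criterion of Proposition~\ref{NulovostExtDelitelnost}(ii). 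Your conversion of $(1-\lambda_j^{-1})$ to $(1-\sigma^{n_j})$ is also too quick: Lemma~\ref{neat3} only gives $1-\lambda_j|_L\in(1-\sigma^{n_j})\Z[\Gamma]$ (divisibility, not equality), which is enough for the paper's ideal-membership argument but does not by itself produce an explicit $v$. Finally, your uniqueness sketch via ``a succession of Hilbert~90 applications'' is not a proof; the $\Delta_{n_j}$ computation is what actually closes this.
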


\begin{rem} \label{Vypocet} 
Colloquially we can say that Theorem~\ref{ExistenceAlfaNu} proves the existence of a $y$-th root of the top generator $\eta$ of $\ca{C}_L$ 
which lies in $\GrEllUn_{F_I}\cap L$, where the root exponent $y$ is an element of the group ring $\Z[\Gamma]$. 
In general, even though $y$ is not an integer, it is still possible
to compute $\alpha$ explicitly as a $p$-power root of a specific elliptic unit constructed from the conjugates of $\eta$.
Indeed, for each $j=1,\dots,s$, define the group ring elements 
$$
N_{\dec_j}=\sum_{i=1}^{p^k/\dec_j}\sigma^{i\dec_j}\qquad\text{and}\qquad \Delta_{\dec_j}=\sum_{i=1}^{(p^k/\dec_j)-1}i\sigma^{i\dec_j}.
$$ 
In particular, we have $(1-\sigma^{\dec_j})N_{\dec_j}=0$ and $(1-\sigma^{\dec_j})\Delta_{\dec_j}=N_{\dec_j}-\frac {p^k}{\dec_j}$.

Note also that the relative norm operator $\N_{L/L'}$ corresponds to the group ring element $N_n$. From 
Theorem~\ref{ExistenceAlfaNu}, we know that $\eta=\alpha^{y}$. Moreover, for all $j\in\{1,\ldots,s\}$, we also have  $\alpha^{N_{\dec_j}}=1$ since
$1=\N_{L/L'}(\alpha)=\alpha^{N_n}$. Consequently, we find that
\begin{align}\label{Urcen}
\eta^{\prod_{i=2}^{s-1}\Delta_{\dec_i}}=\alpha^{\prod_{i=2}^{s-1}(N_{\dec_i}-( p^k/{\dec_i}))}=
\alpha^{(-1)^s\prod_{i=2}^{s-1}(p^k/{\dec_i})}=\alpha^{(-1)^sr},
\end{align}
where $r=\prod_{i=2}^{s-1}\frac {p^k}{\dec_i}$ is a power of $p$, and therefore
\begin{equation}\label{UrceniAlfa}
\alpha^r=\eta^{(-1)^s\prod_{i=2}^{s-1}\Delta_{\dec_i}}.
\end{equation}
\end{rem}

To prove Theorem~\ref{ExistenceAlfaNu} we shall use the following proposition:
\begin{prop}\label{NulovostExtDelitelnost}
Let $f$ be a polynomial in $\Z[X]$, $f\notin\{0,\pm1\}$, and let $A=\Z[X]/f\Z[X]$. 
Let $\cal M$ be a finitely generated $A$-module without $\Z$-torsion. Then
\begin{enumerate}[(i)]
\item $\operatorname{Ext}^1_{A}({\cal M},A)=0$.
\item Let $y$ be a nonzerodivisor in $A$, and let $x\in {\cal M}$. Then $x\in y{\cal M}$ if and only 
if for all $\varphi\in\Hom_A(\cal M,A)$ we have $\varphi(x)\in yA$.
\end{enumerate} 
\end{prop}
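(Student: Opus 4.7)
The key observation underlying both parts is that $A = \Z[X]/f\Z[X]$ is a one-dimensional Gorenstein ring. Indeed, $\Z[X]$ is a regular ring of Krull dimension two, and the hypothesis $f \notin \{0, \pm 1\}$ guarantees that $f$ is a non-zero-divisor in the domain $\Z[X]$, so $A$ is a complete intersection, and hence Gorenstein, of dimension $2-1 = 1$. In particular $A$ has injective dimension one as a module over itself, so $\operatorname{Ext}^i_A(-, A) = 0$ for every $i \geq 2$.

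For (i), I would exploit this vanishing of higher $\operatorname{Ext}$ as follows. For any rational prime $\ell$, the $\Z$-torsion-free hypothesis makes $\ell$ a non-zero-divisor on $\mathcal{M}$, and the short exact sequence $0 \to \mathcal{M} \xrightarrow{\ell} \mathcal{M} \to \mathcal{M}/\ell\mathcal{M} \to 0$, combined with $\operatorname{Ext}^2_A(\mathcal{M}/\ell\mathcal{M}, A) = 0$, shows that multiplication by $\ell$ on $\operatorname{Ext}^1_A(\mathcal{M}, A)$ is surjective. Since $\operatorname{Ext}^1_A(\mathcal{M}, A)$ is a finitely generated $A$-module, the conclusion follows by localising at each maximal ideal $\mathfrak{p}$ of $A$: taking $\ell$ to be the characteristic of the finite residue field $A/\mathfrak{p}$ (which is indeed finite since $A$ is a finitely generated $\Z$-algebra), we have $\ell \in \mathfrak{p} A_\mathfrak{p}$, and the surjectivity of $\ell$ on $\operatorname{Ext}^1_A(\mathcal{M}, A)_\mathfrak{p}$ combined with Nakayama forces this localisation to vanish, hence $\operatorname{Ext}^1_A(\mathcal{M}, A) = 0$.

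For (ii), the direction ``$\Rightarrow$'' is immediate. For the converse, apply $\Hom_A(\mathcal{M}, -)$ to $0 \to A \xrightarrow{y} A \to A/yA \to 0$ and use (i) to obtain that the natural map $\Hom_A(\mathcal{M}, A) \to \Hom_A(\mathcal{M}, A/yA)$ is surjective. Every $\bar\psi : \mathcal{M} \to A/yA$ therefore lifts to some $\varphi : \mathcal{M} \to A$, and the hypothesis $\varphi(x) \in yA$ gives $\bar\psi(x) = 0$. Hence the class $\bar x \in \mathcal{M}/y\mathcal{M}$ is annihilated by every $A/yA$-linear map to $A/yA$. But $y$ is a non-zero-divisor in the one-dimensional Gorenstein ring $A$, so $A/yA$ is a zero-dimensional Gorenstein (equivalently, Artinian self-injective) ring, which is an injective cogenerator for its own module category: every non-zero element of every $A/yA$-module is detected by some map to $A/yA$. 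Therefore $\bar x = 0$, i.e.\ $x \in y\mathcal{M}$. The main conceptual obstacle in the argument is recognising the Gorenstein nature of $A$ and $A/yA$, as this is what makes both the injective-dimension bound used in (i) and the cogenerator property used in (ii) available.
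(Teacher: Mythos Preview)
The paper does not give its own proof of this proposition; it simply cites Proposition~6.2 of \cite{Eigen}. Your argument is correct and self-contained: the recognition that $A=\Z[X]/(f)$ is a one-dimensional Gorenstein ring (a hypersurface in the two-dimensional regular ring $\Z[X]$) is exactly what is needed, and the deductions you draw from it---the vanishing of $\Ext^2_A(-,A)$ combined with Nakayama at each maximal ideal for (i), and the fact that the zero-dimensional Gorenstein quotient $A/yA$ is an injective cogenerator of its module category for (ii)---are standard and correctly applied. The only point worth making explicit is the justification that every maximal ideal of $A$ has finite residue field (so that the prime $\ell$ you choose really lies in $\mathfrak p$): this follows from the general Nullstellensatz for finitely generated $\Z$-algebras, and you invoke it correctly. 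Your approach is almost certainly in the same circle of ideas as the proof in the cited reference.
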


\begin{proof}
This is \cite[Proposition~6.2]{Eigen}.
\end{proof}

\begin{proof}[Proof of Theorem~\ref{ExistenceAlfaNu}]
If $s=2$ then $y=1$ and therefore the equality $\eta=\alpha^y$ trivially holds true with $\alpha=\eta$.
If $s>2$ we always have that  $y$ is a {\it zerodivisor} in $\Z[\Gamma]$, so that one cannot apply 
directly Proposition~\ref{NulovostExtDelitelnost};  hence we shall work in an appropriate quotient of 
$\Z[\Gamma]$ where the image of $y$ is a nonzerodivisor.
Let $N_\dec=\sum_{i=1}^{p^\exponent/\dec}\sigma^{i\dec}$, so that $N_\dec$  can be understood as the norm operator from $L$ to $L'$.
Let $R=\Z[\Gamma]/N_\dec\Z[\Gamma]$ and let $\gamma:R\to(1-\sigma^\dec)\Z[\Gamma]$ be the isomorphism of $\Z[\Gamma]$-modules given by
the multiplication by $1-\sigma^\dec$, i.e.\ $\gamma (x+N_\dec\Z[\Gamma])=(1-\sigma^\dec)x$. 
Let 
\begin{align}\label{fati}
\mm=\{x\in \Psi(\GrEllNum_{F_I})^\HH;\,N_\dec x=0\},
\end{align}
where $\Psi$ is the map which appears in Lemma \ref{Izomorfismus}. It is an $R$-module and since $\mm\subseteq U$, it has no $\Z$-torsion. 
Using both \eqref{DefiniceEtaL} and the norm relation \eqref{Relace}, we obtain
\begin{equation}\label{PsiEta}
\Psi(\eta)=\Psi(\N_{F_I/L}(\eta_I))=s(\HH)\Psi(\eta_I)=s(\HH)\rho_\emptyset,
\end{equation}
where
$s(\HH)=\sum_{\tau\in\HH}\tau\in\Z[G]$, and
\begin{equation}\label{NormaEta}
\N_{L/L'}(\eta)=\N_{F_I/L'}(\eta_I)=\N_{F_{\{1,\dots,s-1\}}/L'}(\eta_{\{1,\dots,s-1\}})^{1-\lambda_s^{-1}}=1,
\end{equation}
where the last equality follows from the fact that the restriction of $\lambda_s$ to $L'$ is trivial since $\wp_s$ splits completely in $L'/K$. In particular,
it follows from \eqref{PsiEta} and \eqref{NormaEta} that $\Psi(\eta)=s(\HH)\rho_\emptyset\in\mm$.

Note that the natural $\Z[\Gamma]$-module structure on $\mm$ is compatible with its $R$-module structure via the natural projection
map $\Z[\Gamma]\rightarrow R$. In particular,
since $U^\HH=\Psi(\GrEllNum_{F_I})^\HH\oplus(s(G)\Z)$ (from Lemma~\ref{Izomorfismus}),
we may view $\mm$ as a $\Z[\Gamma]$-submodule of $U^{\HH}$. We claim that $U^\HH/\mm$ has no $\Z$-torsion. Indeed,
suppose that $x\in U^\HH$ satisfies $cx\in\mm$ for a positive integer $c$. Then $c(N_n x)=N_n(cx)=0$. Since
$U$ has no $\Z$-torsion, this implies that $N_n x=0$, and hence $x\in\mathcal M$.

To each $R$-linear map $\psi\in\Hm {R}{\mm}$ we may associate
the $\Z[\Gamma]$-linear map $\gamma\circ\psi\in\Hm{\Z[\Gamma]}\mm$. Now we fix such a $\psi$. We aim at proving that
$\psi(s(B)\rho_{\emptyset})\in yR$ (see the relation \eqref{rab2} below).
Note that it makes sense to apply $\psi$ to $s(\HH)\rho_\emptyset$ since it was proved earlier that $s(\HH)\rho_\emptyset\in\mm$.

Now, set $f=X^{p^\exponent}-1$ in Proposition~\ref{NulovostExtDelitelnost} so that $A=\Z[X]/f\Z[X]\cong \Z[\Gamma]$.
Since $U^\HH/\mm$ has no $\Z$-torsion, it follows from Proposition~\ref{NulovostExtDelitelnost} (i) that
$\Ext^1_{\Z[\Gamma]}({U^\HH/\mm},\Z[\Gamma])=0$. In particular, the vanishing of this $\Ext^1$ implies the existence of
$\varphi\in\Hm{\Z[\Gamma]} {U^\HH}$ such that $\varphi\big|_{\mm}=\gamma\circ\psi$. 
For each $x\in U^\HH$, we define $\upsilon(x)=(1-\sigma)\varphi(x)$, so that $\upsilon \in\Hm{\Z[\Gamma]} {U^\HH}$.
We now want to specialize the formula which appears in \cite[Corollary~1.7(ii)]{SM} to the present situation in order to obtain
the non-trivial relation
\begin{equation}\label{PatriDoIdealu}
\upsilon(s(\HH)\rho_\emptyset)\in\prod_{i=1}^s(1-\sigma^{\dec_i})\Z[\Gamma].
\end{equation}
The relation \eqref{PatriDoIdealu} is a direct consequence of the formula in loc.cit.\ and the following two observations:
\begin{enumerate}[(i)]
 \item For all $i\in I$, $\upsilon (\ram_ie_i)=0$, where $t_i=|T_i|$ with $T_i=\Gal(F_I/F_{I-\{i\}})$. (Note that it makes sense to apply the map $v$ to $\ram_ie_i$ since
 $\ram_i e_i\in U^B$.)
 \item It follows from Lemma \ref{neat3} that the element  $1-\lambda_i\big|_L$ lies in the principal ideal $(1-\sigma^{\dec_i})\Z[\Gamma]$. Similarly,
 for each $\tau\in T_i$ we have that $\tau\big|_L\in\langle\sigma^{{p^k}/{t_i}}\rangle$ by Corollary~\ref{neat}(i), and therefore
 $1-\tau\big|_L\in (1-\sigma^{\dec_i})\Z[\Gamma]$.
\end{enumerate}

Since the multiplication by $1-\sigma$ is injective on $(1-\sigma^\dec)\Z[\Gamma]$, it follows from \eqref{PatriDoIdealu} that
\begin{align}\label{rab}
\gamma\circ\psi(s(\HH)\rho_\emptyset)=\varphi(s(\HH)\rho_\emptyset)\in\prod_{i=2}^s(1-\sigma^{\dec_i})\Z[\Gamma].
\end{align}
Furthermore, it follows from \eqref{rab} and the fact that $\gamma$ is an $R$-module isomorphism 
that
\begin{align}\label{rab2}
\psi(s(\HH)\rho_\emptyset)\in \prod_{i=2}^{s-1}(1-\sigma^{\dec_i})R=yR,
\end{align}
where $y=\prod_{i=2}^{s-1}(1-\sigma^{\dec_i})$. We thus have proved that for each $\psi\in\Hm {R}{\mm}$ the relation \eqref{rab2} holds true.

Now set $f=\sum_{i=1}^{p^\exponent/\dec}X^{(i-1)n}$. Since $n|p^{k-1}$ it follows that $f\notin\{0,1,-1\}$; we may thus apply
Proposition~\ref{NulovostExtDelitelnost} with $f$ so that $A=\Z[X]/f\Z[X]\cong R$.
Combining \eqref{rab2} with the observation that $y$ is a nonzerodivisor in $R$ (since the roots of $X^n-1$ are distinct from the roots of $f$), it follows from   
Proposition~\ref{NulovostExtDelitelnost} (ii) that there exists
an element $\delta \in\mm$ such that $y\delta =s(\HH)\rho_\emptyset=\Psi(\eta)$. In particular, since $\delta\in\mm$,
we have $\delta \in\Psi(\GrEllNum_{F_I})^\HH$ and $N_\dec\delta =0$.

By Lemma~\ref{Prunik_P_a_L}, there exists $\alpha'\in \GrEllNum_{F_I}\cap L$ (uniquely defined modulo $\mu_K$) such that $\delta =\Psi(\alpha')$.
We have $\Psi(\N_{L/L'}(\alpha'))=N_\dec\Psi(\alpha')=N_\dec\delta=0$, and so $\N_{L/L'}(\alpha')=\xi\in\mu_K$ by Lemma~\ref{Izomorfismus}. Since $p\nmid|\mu_K|$, 
there is $\xi'\in\mu_K$ such that $\N_{L/L'}(\xi')=\xi^{-1}$. Now if we set $\alpha=\alpha'\xi'\in \GrEllNum_{F_I}\cap L$ we obtain $\N_{L/L'}(\alpha)=1$ 
while still keeping the condition $\delta =\Psi(\alpha)$. Hence $\Psi(\alpha^y)=y\delta =\Psi(\eta)$
and $\xi''=\alpha^{-y}\eta\in\ker(\Psi)=\mu_K$. We claim that $\xi''=1$ so that $\alpha^y=\eta$. Indeed, it follows from \eqref{NormaEta} that 
$1=\N_{L/L'}(\alpha^{-y}\eta)=(\xi'')^{p^\exponent/\dec}$ and consequently $\xi''=1$ (since $p\nmid\mu_K$). Moreover,
since $\N_{L/K}(\alpha)=1$ it follows from Lemma~\ref{EliptickeJednotky}(ii) that $\alpha$ is an elliptic unit of $F_I$. Notice that $\alpha$ is uniquely 
determined by the three conditions (i) $\alpha\in L$, (ii) $\N_{L/L'}(\alpha)=1$ and (iii) $\alpha^{y}=\eta$. Indeed, if there were two such $\alpha$'s, their 
quotient $\beta\in L$ would satisfy $\beta^{y}=1$. Similarly to what we did in \eqref{Urcen}, we may apply the 
group ring element $\prod_{j=2}^{s-1}\Delta_{n_j}$ to the equality $\beta^{y}=1$ to 
find that $1=\beta^{r}$ (this uses (ii)) where $r$ is some power of $p$. Since $p\nmid|\mu_L|$ this implies that $\beta=1$.

Finally, applying Hilbert's Theorem 90 to the cyclic extension $L/L'$ implies that there exists a $\gamma\in L^\times$, well-defined up to a 
multiplication by numbers in $(L')^\times$, such that $\alpha=\gamma^{1-\sigma^\dec}$. This concludes the proof. 
\end{proof}

\section{Enlarging the group $\GrEllUn_{L}$ of elliptic units of $L$} \label{ZvetseniGrupy}

We keep the same notation as in the previous sections and we introduce some new one. 
Let us label each subfield of $L$ containing $K$ as follows:
\begin{equation}\label{Mezitelesa}
K=L_0\subsetneq L_1\subsetneq L_2\subsetneq \dots\subsetneq L_k=L.
\end{equation}
In particular, we must have $[L_i:K]=p^i$. 
For each $i=1,\dots,k$ we define
\begin{equation}\label{DefiniceM}
M_i=\bigl\{j\in\{1,\dots,s\};\;\ram_j>p^{k-i}\bigr\}.
\end{equation}
It follows from the definition of $M_i$ that $M_1\subseteq M_2\subseteq\dots\subseteq M_k=\{1,\dots,s\}$, and from the discussion below
\eqref{Serazeni} that $1\in M_1$. 
One may also check using Corollary~\ref{neat}(i) that $j\in M_i$ if and only if $\wp_j$ ramifies in $L_i/K$; in particular,
the conductor of $L_i/K$ is equal to $\mathfrak m_{M_i}$
and so $L_i\subseteq F_{M_i}$ by Proposition~\ref{LemmaPrimySoucin} applied to $L_i/K$ instead of $L/K$.
We define
\begin{equation}
\label{DefiniceEtaLi}
\eta_i=\N_{F_{M_i}/L_i}(\eta_{M_i})\;\;\; \mbox{for $i\in\{1,\ldots,k\}$},
\end{equation}
so that, for example, $\eta_k=\eta\in L=L_k$ is the top generator of  $\GrEllUn_L$,
the group of elliptic units of $L$.
Using the norm relation \eqref{Relace} one may check that $\GrEllUn_L$ is the $\Z[\Gamma]$-module 
generated by $\mu_K$ and by $\eta_1,\dots,\eta_k$.

Before defining the extended group of elliptic units (see Definition \ref{DefiniceAlfa_i} below), we need to fix some more notation.
We fix an index $j\in\{1,\ldots, s\}$ and we let $L_i$ be the largest subfield of $L$ which appears 
in the tower \eqref{Mezitelesa} where $\wp_j$ is unramified; 
the index $i$ is determined by the condition $t_j=p^{k-i}$. Using Lemma~\ref{neat3}, it makes sense to define  $c_j$ as the smallest positive integer such that 
$\sigma^{-c_j\dec_j}\big|_{L_i}=\lambda_j\big|_{L_i}$. Indeed, it follows from the group equality 
$\langle\sigma^{\dec_j}\rangle=\langle\lambda_j\big|_{L},\sigma^{p^k/t_j}\rangle$ in Lemma~\ref{neat3} that
\begin{align}\label{gr_eq}
\langle\sigma^{\dec_j}\rangle/\langle \sigma^{p^k/t_j}\rangle=\langle\lambda_j,{\sigma}^{p^k/t_j}\rangle/\langle\sigma^{p^k/t_j}\rangle.
\end{align}
Note that the quotient group in \eqref{gr_eq} can also be interpreted as the restriction of $\langle\sigma^{\dec_j}\rangle$ to $L_i$.
It follows from \eqref{gr_eq} that $\wp_j$ splits completely in $L_i/K$ if and only if $\frac{p^k}{t_j}=n_j$; in particular, if $\wp_j$ splits completely 
in $L_i/K$ then $c_j=1$ since $\sigma^{n_j}$ lies already in the inertia group of $\mk{P}_j$. If $\wp_j$ does not split completely in $L_i/K$, then 
it follows again from \eqref{gr_eq} that $n_j<\frac{p^k}{t_j}$ and thus $\langle (\sigma|_{L_i})^{n_j}\rangle=
\langle (\lambda_j|_{L_i})\rangle$. In particular, independently of the splitting behavior of $\wp_j$ in $L_i$, we always have that $p\nmid c_j$ and hence $1-\sigma^{c_j\dec_j}$ and 
$1-\sigma^{\dec_j}$ are associated in $\Z[\Gamma]$, i.e.\ each of them divides the other.

Recall that we had chosen an ordering of the ramified primes $\wp_1,\dots,\wp_s$ in the relative extension $L/K$ in such a way that $1=\dec_1\le\dec_2\le\dots\le\dec_s$,
and that this ordering was implicitly assumed in the statement of Theorem~\ref{ExistenceAlfaNu}.
For each index $i\in\{1,\dots,k\}$ such that $|M_i|>1$, Theorem~\ref{ExistenceAlfaNu}, when applied to the extension $L_i/K$, implies
the existence of an elliptic unit $\alpha_i\in\GrEllUn_{F_I}\cap L_i$ and of a number $\gamma_i\in L_i^\times$ such that: 
\begin{enumerate}[(i)]
\item the elliptic unit $\eta_i$ defined in \eqref{DefiniceEtaLi} satisfies 
$\eta_i=\alpha_i^{y_i}$,
\item $\alpha_i=\gamma_i^{z_i}$,
\end{enumerate}
where $z_i=1-\sigma^{c_{\max M_i}\dec_{\max M_i}}$ and $y_i=\prod_{j\in M_i,\ 1<j<\max M_i}(1-\sigma^{c_j\dec_j})$.
In particular, if $|M_i|=2$, we find that $y_i=1$ and $\alpha_i=\eta_i$, since the product is empty. If $i\in\{1,\dots,k\}$ is such that 
$|M_i|=1$ then we set $\gamma_i=\eta_i$ and $\alpha_i=\eta_i^{1-\sigma}$.
\begin{defin}\label{DefiniceAlfa_i}
We define the {\it extended group of elliptic units} $\overline\GrEllUn_L$ to be the
$\Z[\Gamma]$-submodule of ${\mathcal O}^\times_{L}$ generated by $\mu_K$ and by the units $\alpha_1,\dots,\alpha_k$.
\end{defin}

Repeating the arguments of \cite{AnnIII} we can show the following:
\begin{thm}\label{Indexy}
The group of elliptic units $\GrEllUn_L$ of $L$ is a subgroup of 
$\overline\GrEllUn_L$ of index $\bigl[\overline\GrEllUn_L:\GrEllUn_L\bigr]=p^\nu$, where
\begin{equation}\label{DefiniceNu}
\nu=\sum_{j=1}^k\sum_{\mycom{\sst i\in M_j}{\sst 1<i<\max M_j}}\dec_i.
\end{equation}
Moreover, if we let $\varphi_L=\Bigl(\prod_{i=1}^s \ram_i^{\dec_i}\Bigr) \cdot\prod_{j=1}^k p^{-\dec_{\max M_j}}$, which is a power of $p$, then
\begin{equation}\label{PopisNu}
p^\nu=\varphi_L\cdot[L:\widetilde L]^{-1},
\end{equation}
where $\widetilde L$ has the same meaning as in Lemma~\ref{EliptickeJednotky} and
\begin{equation}\label{IndexRozsireneGrupy}
\bigl[{\mathcal O}^\times_L:\overline\GrEllUn_L\bigr]=(12w_Kf_I)^{p^k-1}\cdot\frac{h_L}{h} \cdot\varphi_L^{-1},
\end{equation}
where $h_L$ is the class number of $L$. In particular, if $p>3$ then $p\nmid12w_Kf_I$, and thus it follows  
from \eqref{IndexRozsireneGrupy} that $\varphi_L\mid h_L$.
\end{thm}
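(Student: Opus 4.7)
The plan is to deduce all three assertions from a step-by-step computation of the index $[\overline\GrEllUn_L:\GrEllUn_L]$ and then combine the result with Lemma~\ref{EliptickeJednotky}(i). Since $\overline\GrEllUn_L$ is the $\Z[\Gamma]$-module generated by $\mu_K$ and $\alpha_1,\dots,\alpha_k$, while $\GrEllUn_L$ is generated by $\mu_K$ and $\eta_1,\dots,\eta_k$, and the two sets of generators are linked by $\eta_i=\alpha_i^{y_i}$ when $|M_i|>1$ (respectively $\alpha_i=\eta_i^{1-\sigma}$ when $|M_i|=1$), I would analyse the quotient $\overline\GrEllUn_L/\GrEllUn_L$ level by level along the tower $K=L_0\subsetneq L_1\subsetneq\cdots\subsetneq L_k=L$. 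The contribution at level $j$ is the order of the cokernel of multiplication by $y_j$ on the $\Z[\Gamma]$-cyclic submodule generated by $\alpha_j$ (modulo the lower levels); the relations $\alpha_j=\gamma_j^{z_j}$ and $\N_{L_j/L_j'}(\alpha_j)=1$ from Theorem~\ref{ExistenceAlfaNu} (applied to $L_j/K$, with $L_j'$ the subfield of $L_j$ of degree $\dec_{\max M_j}$ over $K$) show that the annihilator of $\alpha_j$ in $\Z[\Gamma]$ contains $N_{\dec_{\max M_j}}$. Together with $y_j$, this identifies the cokernel as an explicit $\Z[\Gamma]$-quotient of cardinality $p^{\sum_{i\in M_j,\,1<i<\max M_j}\dec_i}$. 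Summing over $j$ yields $p^\nu$.

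Next I would verify the identity \eqref{PopisNu}. Let $i_0=\max\{i\in\{0,1,\dots,k\}:|M_i|\le 1\}$ (with the convention $M_0=\emptyset$). The definition of $\widetilde L$ then forces $\widetilde L=L_{i_0}$, so $[L:\widetilde L]=p^{k-i_0}$. Taking $p$-adic valuations in the definition of $\varphi_L$, the identity $p^\nu=\varphi_L/[L:\widetilde L]$ becomes the combinatorial equality
\begin{equation*}
\sum_{j=1}^k\sum_{\mycom{\sst i\in M_j}{\sst 1<i<\max M_j}}\dec_i=\sum_{i=1}^s\dec_i\cdot v_p(\ram_i)-\sum_{j=1}^k\dec_{\max M_j}-(k-i_0).
\end{equation*}
One verifies this by regrouping the left-hand side according to the smallest $j$ with $i\in M_j$ (namely $j=k-v_p(\ram_i)+1$), and using $\ram_i\dec_i\mid p^k$ together with the fact that $\dec_{\max M_j}=1$ whenever $|M_j|\le 1$.

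The index formula \eqref{IndexRozsireneGrupy} then follows by dividing the expression for $[\mathcal{O}_L^\times:\GrEllUn_L]$ given by Lemma~\ref{EliptickeJednotky}(i) by $[\overline\GrEllUn_L:\GrEllUn_L]=\varphi_L/[L:\widetilde L]$. For the final divisibility claim, suppose $p>3$. Then $w_K\in\{2,4,6\}$ divides $12$, so $p\nmid 12w_K$; the integer $f_I$ is a product of the rational primes $q_j\ne p$ lying below the $\wp_j$, so $p\nmid f_I$; and $p\nmid h$ by hypothesis. Since $\varphi_L$ is a power of $p$ and the index \eqref{IndexRozsireneGrupy} is a positive integer, we must have $v_p(h_L)\ge v_p(\varphi_L)$, i.e., $\varphi_L\mid h_L$.

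The main obstacle is the level-by-level bookkeeping in the first step. The $\alpha_j$'s live in distinct layers of the tower, and their $\Z[\Gamma]$-annihilators depend nontrivially on the interplay between the $\ram_i$, the $\dec_i$, and the Frobenius-like automorphisms $\lambda_i$, so the index computation requires simultaneously tracking several ideals and comparing the indices of their sums. This is precisely the calculation carried out in \cite{AnnIII} for the cyclotomic case, and the authors' preface to the theorem suggests that those arguments transfer, essentially verbatim, to the present elliptic setting.
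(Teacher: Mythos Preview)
Your approach is essentially the same as the paper's. The paper's proof consists of a direct reference to \cite[Theorem~3.1]{AnnIII}, justified by the observation that in both settings (circular and elliptic) one is working with the same abstract $\Z[G]$-module $U/(s(G)\Z)$ via Lemma~\ref{Izomorfismus}; your sketch of the level-by-level index computation, your combinatorial verification of \eqref{PopisNu}, and your derivation of \eqref{IndexRozsireneGrupy} from Lemma~\ref{EliptickeJednotky}(i) are exactly what that reference unpacks to, and you correctly acknowledge this at the end of your proposal.
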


\begin{proof}
The proof goes along the same lines as in \cite[Theorem 3.1]{AnnIII}.
The reason why the same algebraic manipulations are possible here (for elliptic units) and in \cite{AnnIII} (for circular units) is given by the fact that 
in both cases 
we work with a module isomorphic to $U/(s(G)\Z)$ (compare Lemma~\ref{Izomorfismus} with \cite[Lemma 1.1]{AnnIII}).
\end{proof}

\begin{rem}\label{key_rem}
The divisibility statement $\varphi_L\mid h_L$ is stronger than what one can get from the mere fact that $F_I/L$ is an unramified 
abelian extension, see Corollary~\ref{neat}(ii). Indeed, \cite[Proposition 3.4]{AnnIII} states that 
we always have $[F_I:L]\mid\varphi_L$ and that $\varphi_L=[F_I:L]$ 
if and only if $\dec_1=\dots=\dec_{s-1}=1$.
\end{rem}

\section{Semispecial numbers}\label{Semi}

We keep the same notation as in the previous sections. In particular,
$\Gamma=\Gal(L/K)\cong\Z/p^k\Z$ and $s$ is the exact number of prime ideals of $K$ which ramify in $L$.
For the rest of the paper, we fix $\M$, a power of $p$, such that $p^{ks}\mid \M$.
For a prime ideal $\fq$ of $K$, recall that $K(\fq)$ denotes the ray class field of $K$ of modulus $\fq$. 
From Artin's Reciprocity Theorem we know that
\begin{equation}\label{ART}
\Gal(K(\fq)/H)\cong(\mathcal{O}_K/\fq)^{\times}/\im(\mu_K),
\end{equation}
where $H$ is the Hilbert class field of $K$. In particular, $\Gal(K(\fq)/H)$
is a cyclic group. We are now ready to define a family of distinguished abelian extensions over $K$ which have a cyclic Galois group of order $m$.
\begin{defin}\label{neat4}
To each prime ideal $\fq$ of $K$ such that $|\mathcal{O}_K/\fq|\equiv1\pmod \M$ we define 
the field $K[\fq]$ to be the (unique) subfield of $K(\fq)$ containing $K$ such that $[K[\fq]:K]=\M$. Moreover, given a finite field
extension $M/K$ we also define $M[\fq]$ to be the compositum of $M$ with $K[\fq]$.
\end{defin}

Note that since $|\mathcal{O}_K/\fq|\equiv1\pmod \M$ and $p\nmid|\mu_K|$, the group $\Gal(K(\fq)/H)$ is cyclic of order divisible
by $\M$. Therefore, since $p\nmid h$, the existence and the uniqueness of the field $K[\fq]$ follows  directly from Lemma~\ref{cyc} applied to 
the triple $(\Gal(K(\fq)/K),\Gal(K(\fq)/H),\M)$.
It is clear that $\Gal(K[\fq]/K)\cong\Z/\M\Z$ 
and one may also check that $K[\fq]/K$ is ramified only at $\fq$ and 
that this ramification is total and tame.

\begin{defin}\label{DefiniceQm}
Let ${\mathcal Q}_\M$ be the set of all prime ideals $\fq$ of $K$ such that
\begin{enumerate}[(i)]
\item $\fq$ is of absolute degree 1, so that $q=|\mathcal{O}_K/\fq|$ is a prime number;
\item $q\equiv1+\M\pmod{\M^2}$;
\item $\fq$ splits completely in $L$;
\item for each $j=1,\dots,s$, the class of $\pi_j$ is an $\M$-th power in $(\mathcal{O}_K/\fq)^\times$.
\end{enumerate}
\end{defin}

Let us make a few observations about the field $K[\mk{q}]$ and also about the fourth condition of Definition \ref{DefiniceQm}.
Note that Artin's Reciprocity Theorem gives slightly more information concerning the isomorphism \eqref{ART}: the class of $\alpha\in\mathcal{O}_K-\fq$ is mapped 
to the automorphism given by the Artin symbol $\bigl(\tfrac{K(\fq)/K}{\alpha\mathcal{O}_K}\bigr)$. 
Since $H\cap K[\fq]=K$, we have $\Gal(H[\fq]/H)\cong\Gal(K[\fq]/K)$ where the isomorphism is given by restriction, and so 
factoring out the $\M$-th powers in \eqref{ART}, we get the following sequence of isomorphisms:
\begin{equation}\label{Artin}
(\mathcal{O}_K/\fq)^\times/\M\stackrel{\cong}{\longrightarrow}
\Gal(H[\fq]/H)\stackrel{\cong}{\longrightarrow}
\Gal(K[\fq]/K),
\end{equation}
where the first map takes the class of $\alpha\in\mathcal{O}_K-\fq$ to
$\bigl(\tfrac{H[\fq]/K}{\alpha\mathcal{O}_K}\bigr)$, and the second map takes $\bigl(\tfrac{H[\fq]/K}{\alpha\mathcal{O}_K}\bigr)$ to its restriction 
$\bigl(\tfrac{K[\fq]/K}{\alpha\mathcal{O}_K}\bigr)$.
Hence combining the observations that $\pi_j\mathcal{O}_K=\wp_j^h$, $p\nmid h$, with the sequence of isomorphisms appearing in \eqref{Artin}, 
we see that the fourth condition (iv) is equivalent to the statement that
\begin{equation}\label{Splitting}
\bigl(\tfrac{K[\fq]/K}{\wp_j}\bigr)=1\qquad\text{for each $j=1,\dots,s$.}
\end{equation}

\begin{defin}\label{semispe}
A number 
$\varepsilon\in L^\times$ is called {\it $\M$-semispecial\/} 
if for all but finitely many $\fq\in{\mathcal Q}_\M$, there exists a unit $\varepsilon_\fq\in{\mathcal O}_{L[\fq]}^\times$ satisfying
\begin{enumerate}[(i)]\itemsep2pt \parskip0pt \parsep0pt
\item $\N_{L[\fq]/L}(\varepsilon_\fq)=1$;
\item If ${\tilde\fq}$ is the product of all primes of $L[\fq]$ above $\fq$, then $\varepsilon$ and $\varepsilon_\fq$ 
have the same image in $({\mathcal O}_{L[\fq]}/ {\tilde\fq})^\times/(\M/p^{k(s-1)})$.
\end{enumerate}
\end{defin}

\medskip

Let us make a few basic observations about the field $L[\fq]$ which appears in Definition \ref{semispe}. 
For each $\mk{q}\in\ca{Q}_m$, we have that $\Gal(K[\fq]/K)\cong \Z/m\Z$, that $\fq$ is totally ramified in $K[\fq]/K$ and that it splits completely in $L/K$.
In particular, we must have that $L[\fq]/L$ is totally ramified at each prime above $\fq$ and that $L\cap K[\fq]=K$. Since $L$ and $K[\fq]$ are linearly disjoint
over $K$, it follows that the two restriction
maps $\Gal(L[\fq]/L)\rightarrow \Gal(K[\fq]/K)$ and $\Gal(L[\fq]/K[\fq])\rightarrow \Gal(L/K)$ are isomorphisms. 
\begin{thm}\label{Semisp}
The elliptic unit $\alpha\in\GrEllUn_{F_I}\cap L$ described in Theorem~\ref{ExistenceAlfaNu} is $\M$-semispecial.
\end{thm}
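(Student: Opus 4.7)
The plan is to lift the construction of $\alpha$ from Theorem~\ref{ExistenceAlfaNu} to the enlarged extension $L[\fq]/K$, for each $\fq\in\mathcal{Q}_\M$. Fix such a $\fq$. By condition~(iii) and \eqref{Splitting}, $\fq$ splits completely in $L/K$ and each $\wp_j$ splits completely in $K[\fq]/K$; in particular $L\cap K[\fq]=K$, and $\widetilde F:=F_I\cdot K[\fq]$ plays the role of $F_I$ for $L[\fq]/K$, with ramified primes $\wp_1,\dots,\wp_s,\fq$ of ramification indices $\ram_1,\dots,\ram_s,\M$. The decomposition subgroup in $\Gal(L[\fq]/K)$ of a prime above $\fq$ equals $\{1\}\times\Gal(K[\fq]/K)$, which has index $p^{\exponent}$, so in $L[\fq]/K$ the prime $\fq$ becomes the one with the largest decomposition index.

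Following \eqref{DefiniceEtaF}, I would introduce the elliptic number $\eta_{I,\fq}\in\widetilde F$ at modulus $\fq\mathfrak{m}_I$, and by Oukhaba's norm relations (the analogue of \eqref{Relace}),
\[
\N_{\widetilde F/F_I}(\eta_{I,\fq})=\eta_I^{1-\lambda_\fq^{-1}},
\]
where $\lambda_\fq\in\Gal(\widetilde F/K)$ restricts to $\Fr$ of $\fq$ on $F_I$ and to the identity on $K[\fq]$. Since $\lambda_\fq\big|_L=\id$ (by complete splitting of $\fq$ in $L/K$), the element $\eta^\fq:=\N_{\widetilde F/L[\fq]}(\eta_{I,\fq})$ satisfies $\N_{L[\fq]/L}(\eta^\fq)=1$. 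Now I would re-run the module-theoretic argument of Theorem~\ref{ExistenceAlfaNu} for $L[\fq]/K$ in the larger module analogous to $U$ with one extra generator for $\fq$; the same $\Ext^1$-vanishing (Proposition~\ref{NulovostExtDelitelnost}) and Hilbert~90 descent then produce a unit $\varepsilon_\fq\in\mathcal{O}_{L[\fq]}^\times$ with $\varepsilon_\fq^{\,y}=\eta^\fq$ and $\N_{L[\fq]/L}(\varepsilon_\fq)=1$. This handles condition~(i) of Definition~\ref{semispe}.

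The main obstacle is condition~(ii), the congruence $\varepsilon_\fq\equiv\alpha$ modulo $\tilde\fq$ in $\Oqqs/(\M/p^{\exponent(s-1)})$. This rests on two ingredients. First, a classical Robert--Oukhaba congruence (coming from the local behavior of $\varphi_{\fq\mathfrak{m}_I}$ at the primes above $\fq$): modulo $\tilde\fq$, the number $\eta_{I,\fq}$ reduces to a prescribed $(q-1)$-dependent power of $\eta_I$, and the condition $q\equiv 1+\M\pmod{\M^2}$ from Definition~\ref{DefiniceQm}(ii) is precisely tailored so that, after descending via $\N_{\widetilde F/L[\fq]}$, this power becomes congruent to $1$ modulo $\M$; thus $\eta^\fq\equiv\eta\pmod{\tilde\fq}$ up to $\M$-th powers. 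Second, unwinding the equation $\varepsilon_\fq^{\,y}=\eta^\fq$ into a congruence of the form $\varepsilon_\fq\equiv\alpha\pmod{\tilde\fq}$ requires extracting roots modulo $\tilde\fq$; each of the $s-2$ factors $(1-\sigma^{\dec_i})$ of $y$ together with the extra Hilbert~90 descent $\alpha=\gamma^{1-\sigma^{\dec}}$ of Theorem~\ref{ExistenceAlfaNu} contributes at worst a $p^{\exponent}$-fold indeterminacy, for a total loss of precision of $p^{\exponent(s-1)}$, which is exactly the slack permitted by Definition~\ref{semispe}(ii). Managing this propagation of precision, in close parallel with the argument in \cite{AnnIII}, is the most delicate part of the proof.
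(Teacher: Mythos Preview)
Your construction of $\varepsilon_\fq$ in the enlarged setting is essentially what the paper does, though note that $\Gal(L[\fq]/K)\cong\Gamma\times\Z/\M\Z$ is not cyclic, so Theorem~\ref{ExistenceAlfaNu} does not literally apply; the paper re-runs the $\Ext^1$/divisibility argument inside the same ring $R=\Z[\Gamma]/N_n\Z[\Gamma]$, transporting functionals from $U_\fq^\HH$ back to $U^\HH$ via a comparison map $\chi'\circ\pi$ and thereby obtaining $\beta\in L[\fq]$ with $\beta^{(1-\sigma)y}=\hat\eta$ and $\varepsilon_\fq:=\beta^{1-\sigma}$.

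The genuine gap is your treatment of condition~(ii). The paper does \emph{not} invoke any ``Robert--Oukhaba congruence'' for the local reduction of $\varphi_{\fq\mathfrak m_I}$. Instead it proves Proposition~\ref{ChybejiciKongruence} by Rubin's auxiliary-prime trick: one chooses, via \v Cebotarev (Lemma~\ref{PomocnyPrvoideal}), a second prime $\fl$ of absolute degree one with $\ell\equiv1\pmod\M$, with $(\tfrac{L[\fq]/K}{\fl})=\sigma^{-1}$, and such that $\fq$ is inert in $K[\fl]/K$. One then introduces further elliptic units $\eta_\fl\in L[\fl]$ and $\hat\eta_\fl\in L[\fq\fl]$ and compares them through the norm relations~\eqref{Relace} in the square $L[\fq\fl]/L[\fq]$, $L[\fq\fl]/L[\fl]$, $L[\fl]/L$. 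The crucial point is that in $\mathcal O_{L[\fq\fl]}/\tilde\fq\mathcal O_{L[\fq\fl]}$ the map $\N_{L[\fq\fl]/L[\fl]}$ acts as raising to the $\M$-th power (inertia) while $\N_{L[\fq\fl]/L[\fq]}$ acts as raising to the $(\sum_{i=0}^{\M-1}q^i)$-th power (Frobenius); chaining these gives
\[
\hat\eta^{\ell(1-\sigma)}\equiv\bigl(\eta^{\ell(1-\sigma)}\bigr)^{(q-1)/\M}\pmod{\tilde\fq},
\]
which is Proposition~\ref{ChybejiciKongruence}. No direct reduction formula for $\varphi_{\fq\mathfrak m_I}\bmod\tilde\fq$ is used or cited, and I do not think one is available in the required form; the twist by $\ell(1-\sigma)$ is an artifact of the method and is harmless precisely because $\ell\equiv1\pmod\M$.

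Your bookkeeping for the precision loss $p^{k(s-1)}$ is also off. The element $\gamma$ from Hilbert~90 in Theorem~\ref{ExistenceAlfaNu} plays no role here. In the paper one applies the operator $\prod_{i=2}^{s-1}\Delta_{n_i}$ to pass from $\hat\eta$ to $\beta^{r(1-\sigma)}$ (and similarly from $\eta$ to $\alpha^{r}$) with $r=\prod_{i=2}^{s-1}p^k/n_i$, and then a further $\Delta_1$ to kill the $(1-\sigma)$, picking up an extra $p^k$; the total exponent $rp^k$ divides $p^{k(s-1)}$, and together with $(q-1)/\M\equiv1\equiv\ell\pmod\M$ this is exactly what yields the congruence in $(\mathcal O_{L[\fq]}/\tilde\fq)^\times/(\M/p^{k(s-1)})$.
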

\begin{proof}
Recall that the elliptic unit $\alpha\in \GrEllUn_{F_I}\cap L$ was obtained in Theorem~\ref{ExistenceAlfaNu} as a $y$-th root of the 
top generator $\eta$ of $\GrEllUn_L$. In order to show that $\alpha$ is $\M$-semispecial, we need to show that 
for almost all primes $\fq\in\mathcal{Q}_\M$, there exists a unit $\varepsilon_{\mk{q}}\in\mathcal{O}_{L[\fq]}^\times$ 
which satisfies the conditions (i) and (ii) of Definition~\ref{semispe} for $\varepsilon=\alpha$.
In order to show that such an $\varepsilon_{\mk{q}}$ exists, we use an approach similar to the one used  
in the proof of Theorem~\ref{ExistenceAlfaNu}. But this time, the role played by $\eta$ in Theorem~\ref{ExistenceAlfaNu}
will be played by $\hat{\eta}=\N_{F_I[\fq]/L[\fq]}(\tilde{\eta}_{I'})$ where $\tilde{\eta}_{I'}$ (to be defined below) is the top generator of $\GrEllNum_{F_I[\fq]}$.

For the rest of the proof we fix a prime $\fq\in\mathcal{Q}_\M$ unramified in $K/\Q$, which does not divide $q_1\cdots q_s$. 
To simplify the notation, we let $\wp_{s+1}=\fq$, $F_{s+1}=K[\fq]$, and $I'=\{1,\dots,s+1\}$.
Again, for any subset $J\subseteq I'$ with $J\neq\emptyset$, we set $F_J=\prod_{j\in J}F_j$,
$\mathfrak{m}_J=\prod_{j\in J}\wp_j$ (the conductor of $F_J$), and 
\begin{align}\label{ti_uni}
\tilde\eta_J=
\N_{K(\mathfrak{m}_J)/F_J}(\varphi_{\mathfrak{m}_J})^{w_Kf_{I'}/(w_Jf_J)},
\end{align}
where $f_J$ and $w_J$ are defined as in \eqref{Definice_wJ} and $\varphi_{\mathfrak{m}_J}$ is defined as in \cite[Definition~2 on page 5]{H}.
If $J\subseteq I$,
this definition does not change the previous meaning of $F_J$ while $\tilde\eta_J=\eta_J^{q}$, where $q=|\mathcal{O}_K/\fq|=\frac{f_{I'}}{f_I}$. 
It follows also from the definitions that $F_I[\fq]=F_{I'}$ and $\mathfrak{m}_{I'}=\fq \mathfrak{m}_I$. 
By the same reasoning as in Lemma~\ref{OdmocninyZ1} we find that $\mu_{F_I[\fq]}=\mu_K$.

Let $G_\fq=\Gal({F_I[\fq]}/K)$ and let $P_{F_I[\fq]}$ be the group of elliptic numbers of $F_I[\fq]$, i.e.\ $P_{F_I[\fq]}$ is the $\Z[G_\fq]$-module 
generated in $F_I[\fq]^\times$ by $\mu_K$ and by $\tilde\eta_J$ for all $J\subseteq I'$, $J\ne\emptyset$. 
Let $U_\fq\subseteq\Q[G_\fq]\oplus\Z^{s+1}$ be the $\Z[G_\fq]$-module defined in \cite{SM} with the following
parameters: $v=s+1$, for each $j\in\{1,\dots,v\}$, $T_j=\Gal(F_{I'}/F_{I'-\{j\}})$ (the inertia group of $\wp_j$ in $G_\fq$), and $\lambda_j\in G_\fq$
is such that the restrictions $\lambda_j\big|_{F_j}=1$ and $\lambda_j\big|_{F_{I'-\{j\}}}=\bigl(\frac{F_{I'-\{j\}}/K}{\wp_j}\bigr)$.

Now, in order to simplify the notation, we choose to make some natural identifications between certain objects: \lq\lq the old ones\rq\rq\, which have 
already appeared in the proof of Theorem \ref{ExistenceAlfaNu} and \lq\lq the new ones\rq\rq\, which appear in the present proof. Consider
the sequence
\begin{equation}\label{identi}
 \Gal({F_I[\fq]}/K[\fq])\subseteq G_\fq\rightarrow G=\Gal(F_I/K),
\end{equation}
where the arrow is given by the restriction map. We decide to identify $\Gal({F_I[\fq]}/K[\fq])$ with $G$ via the above diagram. In particular, the new 
groups $T_i$ defined in the paragraph just above, for $i\ne s+1$, are identified to the old ones, and if we set 
$\HH=\Gal({F_I[\fq]}/L[\fq])$ it is also identified with the old $B$. The assumption that $\fq\in{\mathcal Q}_\M$ also implies that 
the new elements $\lambda_i$, for $i\in I$, are identified to the old ones (by \eqref{Splitting}) and that $\lambda_{s+1}\in \HH$ (since
$\fq$ split completely in $L$). However, 
the $\Z[G]$-generators of $U\subseteq \Z[G]\oplus\Z^s$ cannot be identified, in any meaningful way, to a subset of the $\Z[G_{\mk{q}}]$-generators
of $U_{\mk{q}}\subseteq\Z[G_{\mk{q}}]\oplus\Z^{s+1}$; so we need to distinguish between these two sets of generators.
Recall, in the notation of \cite{SM}, that $U=\langle\rho_J$; $J\subseteq I\rangle_{\Z[G]}$, that the standard basis of $\Z^s$ is denoted by $e_1,\dots,e_s$,
and that $\pi:\Q[G]\oplus\Z^s\to\Q[G]$ is the projection onto 
the first summand. We set $U'=\pi(U)$, so that $U'$ is generated by $\rho'_J=\pi(\rho_J)$. 
We choose to denote the $\Z[G_\fq]$-generators 
of $U_\fq$ by $\tilde\rho_J$, so that $U_\fq=\langle\tilde\rho_J;\,J\subseteq I'\rangle_{\Z[G_\fq]}$, and the standard basis of 
$\Z^{s+1}$ by $\tilde e_1,\dots,\tilde e_{s+1}$. The next lemma gives precise relationships between the modules $U$, $U'$ and $U_\fq$;
for its proof see \cite[Lemma 2.1]{AnnIII}).

\begin{lem}\label{PorovnaniModulu}
Recall that $G$ is viewed as a subgroup of $G_{\fq}$ via \eqref{identi}. There are injective $\Z[G]$-homomorphisms $\chi:U\to U_\fq$ and $\chi':U'\to U_\fq$ defined by
$$
\chi(\rho_J)=\tilde\rho_{J\cup\{s+1\}}\qquad\text{and}\qquad\chi'(\rho'_J)=\tilde\rho_{J},
$$
for each $J\subseteq I$. Moreover, $U_\fq\cong U\oplus\Z\oplus(U')^{\M-1}$ as $\Z[G]$-modules .
\end{lem}

We can apply Lemma~\ref{Izomorfismus} to our present situation which gives us a homomorphism 
$\Psi_\fq:P_{F_I[\fq]}\to U_\fq$ of $\Z[G_\fq]$-modules defined by $\Psi_\fq(\eta_J)=\tilde\rho_{I'-J}$ for each 
$J\subseteq I'$, $J\ne\emptyset$, and $\Psi_\fq(\mu_K)=0$; where $\ker\Psi_\fq=\mu_K$ and $U_\fq=\Psi_\fq(P_{F_I[\fq]})\oplus(s(G_{\mk{q}})\Z)$. 
Let us define 
\begin{align}\label{tupe}
\hat\eta=\N_{F_I[\fq]/L[\fq]}(\tilde\eta_{I'}).
\end{align}
Then we have
\begin{align}\label{beet}
\Psi_\fq(\hat\eta)=s(\HH)\Psi_\fq(\tilde{\eta}_{I'})=s(\HH)\tilde\rho_\emptyset,
\end{align}
and $\Psi_\fq(P_{F_I[\fq]}\cap L[\mk{q}])=\Psi_\fq(P_{F_I[\fq]})^\HH$, where the last equality can be proved along the same lines as Lemma~\ref{Prunik_P_a_L}.
As in \eqref{Serazeni}, we let $\dec=\max\{\dec_i;\,i\in I\}$,   
and as in the proof of Theorem~\ref{ExistenceAlfaNu} we also let 
$N_\dec=\sum_{i=1}^{p^\exponent/\dec}\sigma^{i\dec}$, and
$R=\Z[\Gamma]/N_\dec\Z[\Gamma]$, where now
$\Gamma=\Gal(L[\fq]/K[\fq])=\langle\sigma\rangle$ (here the new $\sigma$ restricts to the old one). 
We also let $\gamma :R\to(1-\sigma^\dec)\Z[\Gamma]$ be the isomorphism of $\Z[\Gamma]$-modules induced by the multiplication by $1-\sigma^n$. 
Note that the group ring element $N_\dec\in \Z[\Gamma]$ corresponds to the norm operator of $L[\fq]/L'[\fq]$, 
where $L'$ is the field defined just before Theorem~\ref{ExistenceAlfaNu}.

One may also check that the set
$$
\mm_\fq=\{x\in \Psi_\fq(P_{F_I[\fq]})^\HH;\,N_\dec x=0\}
$$
is again an $R$-module (so also a $\Z[\Gamma]$-module) without $\Z$-torsion such that $U_\fq^\HH/\mm_\fq$ has no $\Z$-torsion. In particular,
we may apply Proposition~\ref{NulovostExtDelitelnost} with the polynomial $f=X^{p^k}-1$ to deduce that $\operatorname{Ext}^1_{\Z[\Gamma]}({U_\fq^\HH/\mm_\fq},\Z[\Gamma])=0$.
We also have the equalities
\begin{equation}\label{JednaNorma}
\hat\eta^{N_n}=\N_{L[\fq]/L'[\fq]}(\hat\eta)=\N_{F_I[\fq]/L'[\fq]}(\tilde\eta_{I'})=1,
\end{equation}
where $\hat\eta$ is defined in \eqref{tupe} and $\tilde\eta_{I'}$ in \eqref{ti_uni}.
Indeed, the first equality follows from the definition of $N_n$ and the second one follows from \eqref{tupe}.
For the third equality, note that since $\wp_s$ splits completely in $L'$ (by definition of $L'$)
and also in $K[\fq]$ (by \eqref{Splitting}), then it must also split completely in $L'[\fq]$, and
therefore, from the norm relation \eqref{Relace}, the third equality follows. Combining \eqref{JednaNorma} with \eqref{beet}
we obtain 
\begin{equation}\label{craft}
 s(\HH)\tilde\rho_\emptyset\in\mm_\fq.
\end{equation}

To each $R$-linear functional $\psi\in\Hm {R}{\mm_\fq}$, we may associate the map $\gamma\circ\psi$ which can be viewed naturally as an element
of $\Hm {\Z[\Gamma]}{\mm_\fq}$. Hence, because of the vanishing of the $\Ext^1$, for any given $\psi\in\Hm {R}{\mm_\fq}$, there exists a $\varphi\in\Hm{\Z[\Gamma]} {U_\fq^\HH}$ 
such that $\varphi\big|_{\mm_\fq}=\gamma\circ\psi$. 

The restriction of the projection $\pi:\Q[G]\oplus\Z^s\to\Q[G]$ to $U$ gives a surjective map $\pi\big|_U:U\to U'$, which can be composed with 
the map $\chi'$ of Lemma~\ref{PorovnaniModulu}, to give rise to the $\Z[G]$-linear map 
$\chi'\circ\pi\big|_U:U\to U_\fq$. Restricting further the previous map to $U^B$, we obtain the two maps 
$\chi'\circ\pi\big|_{U^\HH}\in\Hom_{\Z[\Gamma]}(U^\HH,U_\fq^\HH)$ and
$\varphi\circ\chi'\circ\pi\big|_{U^\HH}\in\Hm{\Z[\Gamma]} {U^\HH}$.

We have the following relation:
\begin{align}\label{boot}
\varphi(s(\HH)\tilde\rho_\emptyset)=\varphi\circ\chi'\circ\pi(s(\HH)\rho_\emptyset)\in\prod_{i=1}^s(1-\sigma^{\dec_i})\Z[\Gamma]=(1-\sigma)y(1-\sigma^n)\Z[\Gamma],
\end{align}
where $y=\prod_{i=2}^{s-1}(1-\sigma^{\dec_i})$ is defined as in the statement of Theorem~\ref{ExistenceAlfaNu}. 
Indeed, the first equality follows from the facts that $\chi'\circ\pi(\rho_{\emptyset})=\tilde{\rho}_{\emptyset}$
and that $\chi'\circ\pi$ is $\Z[G]$-linear.
The membership relation follows from \cite[Corollary~1.7(ii)]{SM} and the observation that $\pi(t_je_j)=0$ for all $j\in J$ in the same way as \eqref{PatriDoIdealu}.

It follows from \eqref{craft} that the evaluation $\psi(s(\HH)\tilde\rho_\emptyset)$ makes sense for any $\psi\in\Hm {R}{\mm_\fq}$; and
it follows from \eqref{boot}
and the injectivity of $\gamma$ that
$$
\psi(s(\HH)\tilde\rho_\emptyset)\in(1-\sigma)yR.
$$
Since $\psi$ was arbitrary, Proposition~\ref{NulovostExtDelitelnost} implies that there exists $\delta \in\mm_\fq$ such that 
\begin{align}\label{grou}
(1-\sigma)y\cdot\delta =s(\HH)\tilde\rho_\emptyset=\Psi_\fq(\hat\eta).
\end{align} 
Since $\delta\in\mm_\fq$, there exists a $\beta'\in P_{F_I[\fq]}\cap L[\fq]$ such that $\delta=\Psi_\fq(\beta')$ and $\Psi_\fq(\N_{L[\fq]/L'[\fq]}(\beta'))=0$. 
In particular, we have that $\xi=\N_{L[\fq]/L'[\fq]}(\beta')\in\ker(\Psi_{\fq})=\mu_K$. 
Since  $\N_{L[\fq]/L'[\fq]}(\xi)=\xi^{p^k/\dec}$ and $p\nmid|\mu_K|$, there is $\xi'\in\mu_K$ such that $\N_{L[\fq]/L'[\fq]}(\xi')=\xi^{-1}$. 
We set $\beta=\beta'\xi'$, so that $\beta$ satisfies the norm relation $\N_{L[\fq]/
L'[\fq]}(\beta)=1$ while still keeping the equality $\delta=\Psi_\fq(\beta)$. Since $\Psi_\fq(\beta^{(1-\sigma)y})=(1-\sigma)y\delta=\Psi_\fq(\hat\eta)$,
it follows that $\xi''=\beta^{-(1-\sigma)y}\hat\eta\in\ker(\Psi_{\fq})=\mu_K$. We claim that $\xi''=1$. Indeed,  
from \eqref{JednaNorma} we have $1=\N_{L[\fq]/L'[\fq]}(\xi'')=(\xi'')^{p^k/\dec}$, 
and therefore $\xi''=1$. We thus have constructed an elliptic number $\beta\in P_{F_I[\fq]}\cap L[\fq]$ which satisfies the equality $\beta^{(1-\sigma)y}=\hat\eta$.

Now, we would like to show that the elliptic number $\beta$ constructed in the above paragraph
is a unit which satisfies the additional condition $\N_{L[\fq]/L}(\beta)=1$. 
By a similar computation as the one done in Remark~\ref{Vypocet}, we find that
\begin{equation}\label{UrceniBeta}
\beta^{r(1-\sigma)}=\hat\eta^{(-1)^s\prod_{i=2}^{s-1}\Delta_{\dec_i}}\;\;\;\;\;\;\;\mbox{where}\;\;\;\;\;\; r=\prod_{i=2}^{s-1}\frac {p^k}{\dec_i}.
\end{equation}
In particular, applying $\Delta_1$ on each side of the first equality in \eqref{UrceniBeta} and using the norm relation 
$\N_{L[\fq]/L'[\fq]}(\beta)=\beta^{N_n}=1$, we find that 
\begin{align}\label{box}
\beta^{rp^k}=\hat\eta^{(-1)^{s+1}\prod_{i=1}^{s-1}\Delta_{\dec_i}}.
\end{align}
We have 
\begin{align}\label{box2}
 \N_{L[\fq]/L}(\hat\eta)=\N_{F_I[\fq]/L}(\tilde\eta_{I'})=
\N_{F_I/L}(\tilde\eta_{I})^{1-\lambda_{s+1}^{-1}}=1,
\end{align}
where the first equality follows from the definitions of $\hat\eta$ and $\tilde\eta_{I'}$, the second equality from the
norm relations \eqref{Relace}, and the last equality from the fact that $\fq$ splits completely in $L/K$.
Combining \eqref{box} and \eqref{box2}, with the fact that $p\nmid|\mu_K|$, we deduce that $\N_{L[\fq]/L}(\beta)=1$.
From the previous equality we get that $\N_{F_I[\fq]/K}(\beta)=1$, 
and therefore, applying Lemma~\ref{EliptickeJednotky}(ii) we deduce that $\beta$ is a unit.

In order to finish the proof that $\alpha$ is $m$-semispecial, we need to construct a unit
$\varepsilon_\fq\in L[\fq]$ which satisfies the conditions (i) and (ii) of Definition \ref{semispe} for $\varepsilon=\alpha$.
We set $\varepsilon_\fq=\beta^{1-\sigma}$. So far, from what has been proved on $\beta$, we know that
$\varepsilon_{\fq}$ is a unit which satisfies the norm relation (i). By means of 
the next proposition (see Proposition \ref{ChybejiciKongruence} below) we shall prove that $\varepsilon_{\mk{q}}$ and $\alpha$ also satisfy the 
congruence relation (ii). 

Let us recall some of the notation that was  fixed at the beginning of Section \ref{Semi}. The integer
$m$ is a fixed power of $p$, such that $p^{ks}|m$, $\fq$ is a prime ideal of $K$ which lies in the special set $\ca{Q}_m$.
In particular, it follows from the definition of $\ca{Q}_m$ that $\fq$ splits completely in $L/K$, and that the extension $L[\mk{q}]/L$ is cyclic of degree $m$,
and that it is totally ramified at each prime above $\fq$. 

\begin{prop}\label{ChybejiciKongruence}
Let $\mk{q}\in\ca{Q}_m$ be the prime that was fixed during the course of the proof of Theorem \ref{Semisp}, and
let ${\tilde\fq}$ denote the product of all the primes of $L[\fq]$ above $\fq$. Then, there exists a rational prime $\ell\equiv1\pmod\M$ 
such that the following congruence holds true:
\begin{equation}\label{Kongruence}
\hat\eta^{\ell(1-\sigma)}\equiv
(\eta^{\ell(1-\sigma)})^{\frac{q-1}{\M}}
\pmod{\tilde\fq},
\end{equation}
where $q=|\mathcal{O}_K/\fq|$, $\eta$ is the top generator of the group $\ca{C}_L$ and $\hat\eta$ is defined in \eqref{tupe}.
\end{prop}

The proof of Proposition \ref{ChybejiciKongruence} is given further below. 
Assuming Proposition~\ref{ChybejiciKongruence} we may now finish the proof of Theorem~\ref{Semisp} by proving
the congruence relation (ii) in Definition \ref{semispe}. Using successively \eqref{UrceniBeta}, \eqref{Kongruence}, and \eqref{UrceniAlfa} we find that
\begin{align*}
\beta^{r(1-\sigma)^2\ell}=\hat\eta^{(-1)^s\ell(1-\sigma)\prod_{i=2}^{s-1}\Delta_{\dec_i}}&\equiv
\eta^{(-1)^s\frac{q-1}{\M}\ell(1-\sigma)\prod_{i=2}^{s-1}\Delta_{\dec_i}}\\&=
\alpha^{r\frac{q-1}{\M}\ell(1-\sigma)}
\pmod{\tilde\fq},
\end{align*}
where $r$ is the power of $p$ defined in \eqref{UrceniBeta}. Applying $\Delta_1$ to each side of the 
previous equality and using the facts that $\alpha^{N_1}=1$ (since $1\leq n$ and $\alpha^{N_n}=1$), that $(1-\sigma)\Delta_1=N_{1}-p^k$
and that $(\sigma-1)N_1=0$, we obtain
\begin{align}\label{trout}
\beta^{p^kr(1-\sigma)\ell}\equiv\alpha^{p^kr\frac{q-1}{\M}\ell}
\pmod{\tilde\fq}.
\end{align}
Because $\frac{q-1}{\M}\equiv1\equiv\ell\pmod\M$, it follows from \eqref{trout} that $\beta^{p^kr(1-\sigma)}$ and $\alpha^{p^kr}$
have the same image in $(\mathcal{O}_{L[\fq]}/{\tilde\fq})^\times/\M$. Moreover, since $r\mid p^{k(s-2)}$ it also follows that
$\beta^{1-\sigma}$ and $\alpha$ must have the same image in $(\mathcal{O}_{L[\fq]}/{\tilde\fq})^\times/(\M/p^{k(s-1)})$. 
We thus have shown that both $\varepsilon=\alpha$ and $\varepsilon_{\mk{q}}=\beta^{1-\sigma}$ satisfy the congruence relation (ii).  
This completes the proof of Theorem~\ref{Semisp}.
\end{proof}

\begin{proof}[Proof of Proposition~\ref{ChybejiciKongruence}]
The proof will follow essentially from an idea of Rubin, see \cite[Theorem 2.1]{RubinEuler}.
Let $\pi\in\mathcal{O}_K$ be such that $\pi\mathcal{O}_K=\fq^h$. Let $K_\M=K(\zeta_\M)$ where $\zeta_{\M}$ denotes a primitive
$\M$-th root of unity. Since 
$\mathcal{O}_K^\times=\mu_K$, $p\nmid|\mu_K|$ and $K_{\M}$ contains a primitive $p$-th root of unity, 
the field $M=K_\M(\pi^{\frac1p})$ does not depend on
the chosen generator $\pi$ of $\fq^h$ and on the chosen $p$-th root of $\pi$. One may also check that $M/K$ is a Galois extension.
Furthermore, we claim that $\pi$ cannot be a $p$-th power in $K_m$. Indeed, if it were the case then, since $p\nmid h$, this would imply that 
the ramification index of $\fq$ in $K_m/K$ would be divisible by $p$;
but this is impossible since $K_m/K$ ramifies only at primes above $p$.
Since $\pi$ is not a $p$-th power in $K_m$ it follows that
$M/K_\M$ is a cyclic extension of degree $p$. 

In order to finish the proof of Proposition \ref{ChybejiciKongruence} we need the following technical lemma:

\begin{lem}\label{PomocnyPrvoideal}
Let $\fq$ be as in Proposition \ref{ChybejiciKongruence} and recall that $\sigma$ is the unique ge\-ne\-ra\-tor of $\Gal(L[\mk{q}]/K[\mk{q}])$
which restricts to the initial generator of $\Gal(L/K)$ (which was also denoted by $\sigma$). 
Then there exists a prime $\fl$ of $K$ of absolute degree 1 satisfying the following three conditions:
\begin{enumerate}[(i)]
\item If we  let $\ell=|\mathcal{O}_K/\fl|$, then $\ell\equiv1\pmod \M$ and $\ell$ is unramified in $K/\Q$.
\item The prime $\fl$ is unramified in $L[\fq]$ and the Artin symbol $\bigl(\frac{L[\fq]/K}{\fl}\bigr)=\sigma^{-1}$.
\item The prime $\fq$ is inert in $K[\fl]/K$ (note that this is equivalent to say that $\fq$ is
unramified in $K[\fl]$ and that $\bigl\langle\bigl(\frac{K[\fl]/K}{\fq}\bigr)\bigr\rangle=\Gal(K[\fl]/K)$).
\end{enumerate}
\end{lem}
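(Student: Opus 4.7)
The plan is to apply Chebotarev density to the Galois extension $N=L[\fq]\cdot M$ over $K$, after reformulating condition (iii) as a Frobenius condition via Kummer theory.

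First I would translate (iii) into a Frobenius condition on $\fl$ in $M/K_\M$. Once (i) holds, $\fl$ splits completely in $K_\M/K$, so for any prime $\mathfrak{L}$ of $K_\M$ above $\fl$ one has $(\mathcal{O}_{K_\M}/\mathfrak{L})^\times \cong (\mathcal{O}_K/\fl)^\times$. By class field theory, $\Gal(K[\fl]/K) \cong (\mathcal{O}_K/\fl)^\times/\bigl(\mu_K\cdot(\mathcal{O}_K/\fl)^{\times\M}\bigr)$, with the class of $\pi$ mapping to $\bigl(\tfrac{K[\fl]/K}{\fq}\bigr)^h$. Since $|\mu_K|$ is coprime to $\M$ and $\M$ is a $p$-power, $\mu_K$ sits inside the subgroup of $\M$-th powers, so $\fq$ is inert in $K[\fl]/K$ precisely when $\pi$ is not a $p$-th power modulo $\fl$; by Kummer theory this holds iff the Frobenius of $\mathfrak{L}$ in $M/K_\M$ is nontrivial.

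Next I would set up the ambient Galois extension. The composite $N=L[\fq]\cdot M$ is Galois over $K$ because $L[\fq]/K$ is abelian, and $M/K$ is Galois since every Galois conjugate of $\pi^{1/p}$ differs from it by a $p$-th root of unity in $K_\M\subseteq M$. To apply Chebotarev and produce $\fl$ with the desired Frobenius I must verify two independence statements: (a) $L[\fq]\cap K_\M=K$, and (b) $L[\fq]\cdot K_\M \cap M = K_\M$. Statement (a) is a routine ramification argument: $L[\fq]/K$ ramifies only at $\wp_1,\dots,\wp_s,\fq$, all coprime to $p$ by the running hypotheses and by $\fq\in\mathcal{Q}_\M$, while $K_\M/K$ ramifies only at primes above $p$, so the intersection is an everywhere unramified abelian extension of $K$ of $p$-power degree and hence equals $K$ because $p\nmid h$.

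The main obstacle is (b), which I would prove by contradiction. If $M\subseteq L[\fq]\cdot K_\M$, then (a) yields the decomposition $\Gal(L[\fq]\cdot K_\M/K)\cong \Gal(L[\fq]/K)\times\Gal(K_\M/K)$, which forces $\Gal(K_\M/K)$ to act trivially by conjugation on $\Gal(L[\fq]\cdot K_\M/K_\M)$ and hence on its quotient $\Gal(M/K_\M)\cong\mu_p$. But a direct Kummer-theoretic computation shows that this conjugation action is given by the cyclotomic character $\chi\colon\Gal(K_\M/K)\to\mathbb{F}_p^\times$, which is nontrivial because $\zeta_p\notin K$ (as $p\nmid|\mu_K|$ and $K$ is imaginary quadratic, so the only way to have $\zeta_p\in K$ would be $p=3$ and $K=\Q(\zeta_3)$, which is excluded). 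This contradiction proves (b). Once (a) and (b) are in hand, the resulting surjection $\Gal(N/K)\twoheadrightarrow\Gal(L[\fq]/K)\times\Gal(K_\M/K)\times\Gal(M/K_\M)$ lets me pick $\tilde\sigma\in\Gal(N/K)$ restricting to $\sigma^{-1}$ on $L[\fq]$, trivially on $K_\M$, and nontrivially on $\Gal(M/K_\M)$; applying Chebotarev to the Galois closure of $N/\Q$ then produces infinitely many rational primes $\ell$ which split in $K/\Q$ and for which some prime $\wp$ of $N$ above $\ell$ has Frobenius $\tilde\sigma$ in $N/K$. Setting $\fl=\wp\cap K$ and discarding the finitely many primes ramified in $N/K$ or lying above $p\cdot q_1\cdots q_s\cdot q$, every remaining $\fl$ satisfies (i), (ii), and (iii).
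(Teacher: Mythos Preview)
Your proof is correct and follows the same strategy as the paper: apply \v{C}ebotarev to $N=L[\fq]\cdot M$ after establishing the needed linear disjointness, and translate condition (iii) via Kummer theory into a Frobenius condition in $M/K_\M$. The paper organizes the disjointness step a bit more economically: rather than proving your (a) and (b) separately, it observes that the maximal abelian subextension of $M/K$ is $K_\M$ (which is precisely what your cyclotomic-character computation in (b) shows), so that $L[\fq]\cap M=L[\fq]\cap K_\M$ since $L[\fq]/K$ is abelian, and then a ramification argument gives $L[\fq]\cap K_\M=K$; this yields directly $\Gal(N/K)\cong\Gal(L[\fq]/K)\times\Gal(M/K)$, from which the desired $\tau$ is immediate. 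One small correction to your write-up: the claimed surjection $\Gal(N/K)\twoheadrightarrow\Gal(L[\fq]/K)\times\Gal(K_\M/K)\times\Gal(M/K_\M)$ does not exist as stated, since there is no natural map from $\Gal(N/K)$ to $\Gal(M/K_\M)$. What your (a) and (b) actually give is the isomorphism $\Gal(N/K_\M)\cong\Gal(L[\fq]/K)\times\Gal(M/K_\M)$, and that is exactly what is needed to produce your $\tilde\sigma$ (which then automatically lies in $\Gal(N/K_\M)$ and hence restricts trivially to $K_\M$).
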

\noindent Recall here that the fields $K[\mk{q}],K[\fl]$ and $L[\mk{q}]$ were introduced in Definition \ref{neat4}.
Note that since $\fq\in\ca{Q}_m$ and $\sigma$ acts as the identity on $K[\fq]$, it follows from the above condition (ii) 
that $\fl$ splits completely in $K[\fq]/K$ into $\M$ distinct primes which stay inert in $L[\fq]/K[\fq]$. Moreover,
the fields $L[\fq]$ and $K[\fl]$ are linearly disjoint over $K$ since $\fl$ is unramified in $L[\fq]$ and
$\fl$ is totally ramified in $K[\fl]$.

Before finishing the proof of Proposition~\ref{ChybejiciKongruence} we find it more convenient to prove Lemma~\ref{PomocnyPrvoideal} first
and then finish the proof of Proposition~\ref{ChybejiciKongruence} afterwards. 
\begin{proof}[Proof of Lemma~\ref{PomocnyPrvoideal}] As the maximal abelian subextension of $M/K$ is $K_\M/K$ and $L[\fq]/K$ is abelian, we have 
$L[\fq]\cap M=L[\fq]\cap K_\M$. Since $L[\fq]/L$ is totally ramified at each prime above $\fq$ and $\fq$ is unramified in $K_\M/K$, 
we have that $L[\fq]\cap K_\M=L\cap K_\M$. As $p$ is unramified in $L/\Q$ and each prime above $p$ is totally ramified in $K_\M/K$, 
we also have that $L\cap K_\M=K$, and therefore, $L[\fq]\cap M=K$. Now, since $L[\mk{q}]$ and $M$ were shown to be linearly disjoint over $K$, 
there exists a $\tau\in\Gal((L[\fq]\cdot M)/K)$ which restricts to 
$\sigma^{-1}\in\Gal(L[\fq]/K)$ and to a generator of $\Gal(M/K_\M)\subseteq\Gal(M/K)$. 

By the \v Cebotarev's Density Theorem, there are infinitely many primes of $K$ of absolute degree 1 whose Artin symbol is the 
conjugacy class of $\tau$. We can choose among them a prime $\fl$ not dividing $6q\cdot q_1\dots q_s$ (here $q=|\ca{O}_K/\fq|$) such that $\ell=|\mathcal{O}_K/\fl|$
is unramified in $K/\Q$. Since $\tau$ acts as the identity on $K_\M$, it follows that $\ell$ splits completely in $\Q(\zeta_\M)/\Q$.
It is now clear that the first two conditions of the lemma are satisfied. 

It remains to prove the third condition. 
Let $\mathfrak{L}$ be a prime of $K_\M$ above $\fl$. Since $\fl$ splits completely in $K_\M/K$
it follows that $\mathcal{O}_{K_\M}/\mathfrak{L}\cong \mathcal{O}_K/\fl$. Moreover, because $\langle\tau\big|_{M}\rangle=\Gal(M/K_m)\cong \Z/p\Z$,
$\mathfrak{L}$ must be inert in $M/K_\M$. From these observations, it follows that the element $\pi$ cannot be a $p$-th power in 
$(\mathcal{O}_K/\fl)^\times$.

Recall that from Artin's Reciprocity Theorem and the fact that $p\nmid|\mu_K|$ we have 
$(\mathcal{O}_K/\fl)^\times/\M\cong\Gal(K[\fl]/K)$ (see \eqref{Artin}). Since $\pi$ was shown to be a non $p$-th power in 
$(\mathcal{O}_K/\fl)^\times$, 
it follows that $\bigl(\tfrac{K[\fl]/K}{\pi\mathcal{O}_K}\bigr)=\bigl(\tfrac{K[\fl]/K}{\fq}\bigr)^h$ is not a $p$-th power in $\Gal(K[\fl]/K)$.
Finally, since $\Gal(K[\fl]/K)$ is a cyclic group of order $m$ (a power of $p$), it follows that $\bigl(\tfrac{K[\fl]/K}{\fq}\bigr)$
must generate $\Gal(K[\mk{l}]/K)$, i.e., $\mk{q}$ is inert in $K[\mk{l}]/K$. This concludes the proof of Lemma \ref{PomocnyPrvoideal}.
\end{proof}

We may now finish the proof of Proposition~\ref{ChybejiciKongruence}. Recall that $\fq$ is a fixed prime in $\ca{Q}_m$.
Let $\mk{l}$ be a prime which satisfies the three conditions in Lemma~\ref{PomocnyPrvoideal}.
As in the proof of Theorem \ref{Semisp}, we let $\wp_{s+1}=\fq$, $F_{s+1}=K[\fq]$ and $I'=\{1,\dots,s+1\}$.
We introduce two auxiliary elliptic units:
\begin{align*}
\eta_\fl&=
\N_{K(\fl\mathfrak{m}_I)/L[\fl]}(\varphi_{\fl\mathfrak{m}_I})^{w_K},
\\
\hat\eta_\fl&=
\N_{K(\fl\mathfrak{m}_{I'})/L[\fq\fl]}(\varphi_{\fl\mathfrak{m}_{I'}})^{w_K},
\end{align*}
where $L[\fq\fl]$ means the compositum of $L[\fl]$ and $L[\fq]$
(for the definition of $\varphi_{\fl\mathfrak{m}_I}$ and $\varphi_{\fl\mathfrak{m}_{I'}}$ see \cite[Definition 2 on page 5]{H}). 
Since $\fl\nmid 6$, we have for any $\zeta\in\mu_K-\{1\}$ that $\zeta\not\equiv1\pmod\fl$.
Combining the previous observation with the norm relation \eqref{Relace},
and the fact that $\bigl(\frac{L[\fq]/K}{\fl}\bigr)=\sigma^{-1}$, we may deduce that
\begin{align}
\N_{L[\fq\fl]/L[\fl]}(\hat\eta_\fl)&=\eta_\fl^{q(1-\Fr(\fq)^{-1})},\label{Norma_ql_l}
\\
\N_{L[\fq\fl]/L[\fq]}(\hat\eta_\fl)&=\hat\eta^{\ell(1-\Fr(\fl)^{-1})}=\hat\eta^{\ell(1-\sigma)},\label{Norma_ql_q}
\\
\N_{L[\fl]/L}(\eta_\fl)&=\eta^{\ell(1-\Fr(\fl)^{-1})}=\eta^{\ell(1-\sigma)},\label{Norma_l}
\end{align}
where $q=|\ca{O}_K/\fq|$, $\ell=|\ca{O}_K/\fl|$, $\Fr(\fq)=\bigl(\frac{L[\mk{l}]/K}{\mk{q}}\bigr)$ and
$\Fr(\fl)=\bigl(\frac{L[\mk{q}]/K}{\mk{l}}\bigr)$. In order to compare the different units $\hat\eta_\fl, \eta_\fl,\hat\eta$ and $\eta$,
we shall work in $\mathcal{O}_{L[\fq\fl]}$ modulo the product of all the primes of $L[\fq\fl]$ above $\fq$, which we denote by $\hat\fq$.
Since $\fq\in\ca{Q}_m$, $\mk{q}$ splits completely in $L/K$, 
and by the third condition of Lemma~\ref{PomocnyPrvoideal}, the primes of $L$ above $\fq$ are inert in $L[\fl]/L$. 
Therefore, each prime of $L[\fq]$ above $\fq$ is inert in $L[\fq\fl]/L[\fq]$, 
and so $\hat\fq=\tilde\fq\mathcal{O}_{L[\fq\fl]}$, where as before $\tilde{\mk{q}}$ corresponds to the
product of all primes of $L[\mk{q}]$ above $\mk{q}$. We therefore have the following isomorphisms of rings:
\begin{align*}
\mathcal{O}_{L[\fq]}/\tilde\fq&\cong\mathcal{O}_L/\fq\mathcal{O}_L\cong
(\mathbb{F}_{q})^{p^k},
\\
\mathcal{O}_{L[\fq\fl]}/\tilde\fq\mathcal{O}_{L[\fq\fl]}&\cong
\mathcal{O}_{L[\fl]}/\fq\mathcal{O}_{L[\fl]}\cong(\mathbb{F}_{q^m})^{p^k}.
\end{align*}
Since $L[\fq]$ and $L[\fl]$ are linearly disjoint over $L$, it makes sense to
extend $\Fr(\fq)\in\Gal(L[\fl]/K)$ to $L[\fq\fl]$ in such a 
way that $\Fr(\fq)$ is the identity on $L[\fq]$ and we still denote this extension by $\Fr(\fq)$. In particular, 
$\Fr(\fq)$ generates $\Gal(L[\fq\fl]/L[\fq])$.

It follows from the discussion above that $\Fr(\fq)$ acts as raising to the $q$-th power on $\mathcal{O}_{L[\fq\fl]}/\tilde\fq\mathcal{O}_{L[\fq\fl]}$,
and that $\Gal(L[\fq\fl]/L[\fl])$ (the inertia group at $\fq$) acts trivially on $\mathcal{O}_{L[\fq\fl]}/\tilde\fq\mathcal{O}_{L[\fq\fl]}$. 
From these two observations, it follows that the norms $\Norm_{L[\fq\fl]/L[\fl]}$ and $\Norm_{L[\fq\fl]/L[\fq]}$ act on the ring
$\mathcal{O}_{L[\fq\fl]}/\tilde\fq\mathcal{O}_{L[\fq\fl]}$ as raising to the $\M$-th power and  
as raising to the $\bigr(\sum_{i=0}^{\M-1}q^i\bigl)$-th power, respectively. 
Since $q\equiv 1\pmod\M$, there exists a positive integer $r$ such that $\sum_{i=0}^{\M-1}q^i=\M r$.

Combining \eqref{Norma_ql_q}, \eqref{Norma_ql_l}, and \eqref{Norma_l} we find that
\begin{align}\label{dent}
\hat\eta^{\ell(1-\sigma)}&\equiv
\hat\eta_\fl^{\M r}\equiv
\eta_\fl^{qr(1-\Fr(\fq)^{-1})}\equiv
\eta_\fl^{r(q-1)}\equiv
(\eta_{\fl}^{mr})^{\frac{q-1}{\M}}\nonumber\\&\equiv
\eta^{\ell(1-\sigma)\frac{q-1}\M}
\pmod{\tilde\fq\mathcal{O}_{L[\fq\fl]}}.
\end{align}
Finally, since the natural map $\ca{O}_{L[\fq]}/\tilde\fq\rightarrow \ca{O}_{L[\fq\fl]}/\tilde\fq\ca{O}_{L[\fq\fl]}$ is injective,
it follows from \eqref{dent} that $\hat\eta^{\ell(1-\sigma)}\equiv\eta^{\ell(1-\sigma)\frac{q-1}\M}\pmod{\tilde{\fq}}$.
This completes the proof of Proposition~\ref{ChybejiciKongruence}.
\end{proof}

\section{Annihilating the ideal class group}

For this section we keep the same notation and assumptions as in the previous sections. In particular, 
$\Gal(L/K)=\Gamma=\langle\sigma\rangle\cong\Z/p^k\Z$ and the extended group of 
elliptic units $\overline\GrEllUn_L$ is defined as the $\Z[\Gamma]$-submodule of ${\mathcal O}^\times_{L}$ generated by $\mu_K$ and by the units $\alpha_1,\dots,\alpha_k$, 
see Definition~\ref{DefiniceAlfa_i}. 

For each $j\in\{1,\dots,s\}$ recall that $n_j$ (a power of $p$) was defined as the index of the decomposition group of $\mathfrak{P}_j$ (a prime of $L$ above
$\wp_j$) in $\Gamma$ (see Section \ref{bit}) and that 
$\dec_1\le\dec_2\le\ldots\le\dec_s$ (see \eqref{Serazeni}).
For each $i\in\{1,\dots,k\}$ we define 
\begin{align}\label{mu}
\mu_i=\dec_{\max M_i},
\end{align} 
where $M_i\subseteq\{1,\ldots,s\}$ 
is the set defined in \eqref{DefiniceM}. In particular, $\mu_i$ is always a power of $p$ (possibly trivial).
Since $M_{i}\subseteq M_{i+1}$, we always have that $\mu_i\leq \mu_{i+1}$. Let us call an index $i\in \{1,\ldots, k-1\}$ a {\it jump}
if $\mu_{i}<\mu_{i+1}$.
Furthermore, we declare  the indices $0$ and $k$ to be jumps and we set $\mu_0=0$. 
Using the notion of jumps one can write down a $\Z$-basis of $\overline\GrEllUn_L$ using only conjugates of the
generators $\alpha_1,\dots,\alpha_k$ whose indices correspond to jumps.

\begin{lem}\label{DruhaBazeNoveGrupy}
Let $0=s_0 < s_1 < \ldots < s_\kappa = k$ be the ordered sequence of all the jumps. Note that $\kappa\geq 1$.
Then the set $\bigcup_{t=1}^\kappa\{\alpha_{s_t}^{\sigma^i};\,0\le i<p^{s_t}-p^{s_{t-1}}\}$ is a $\Z$-basis of $\overline\GrEllUn_L$.
\end{lem}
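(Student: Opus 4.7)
My plan is a rank-plus-generation argument, modelled on \cite[Section~3]{AnnIII} for circular units; the two settings share the same algebraic core via the module $U$ of \cite{SM}, as was already exploited in Theorem~\ref{Indexy}. First, I count the proposed set $S=\bigcup_{t=1}^\kappa\{\alpha_{s_t}^{\sigma^i};\,0\le i<p^{s_t}-p^{s_{t-1}}\}$: its cardinality is $\sum_{t=1}^\kappa(p^{s_t}-p^{s_{t-1}})=p^{s_\kappa}-p^{s_0}=p^k-1$ by telescoping. Since $L$ is totally imaginary of degree $2p^k$ over $\Q$, Dirichlet's unit theorem gives $\rank_\Z\mathcal{O}_L^\times=p^k-1$, and Theorem~\ref{Indexy} then forces $\rank_\Z\overline\GrEllUn_L=p^k-1$. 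The torsion of $\overline\GrEllUn_L$ is $\mu_K$, since $\mu_L=\mu_{F_I}=\mu_K$ by Lemma~\ref{OdmocninyZ1}. Consequently it suffices to show that $S$ generates the torsion-free quotient $\overline\GrEllUn_L/\mu_K$: a surjection between free $\Z$-modules of equal finite rank is automatically an isomorphism, so generation will yield both linear independence and the basis property at once.

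To reduce away the non-jump generators, I fix $s_{t-1}<i<s_t$. Then $\mu_i=\mu_{s_t}$ by the very definition of a jump, and because $c_{\max M_i}$ is coprime to $p$, the group-ring elements $z_i=1-\sigma^{c_{\max M_i}\mu_i}$ and $z_{s_t}$ generate the same principal ideal of $\Z[\Gamma]$. Combining the identities $\eta_i=\alpha_i^{y_i}$ and $\alpha_i=\gamma_i^{z_i}$ with an iterated application of the norm relation~\eqref{Relace} along the descent $F_{M_{s_t}}\supseteq F_{M_i}$, I will express $\alpha_i$ as a $\Z[\Gamma]$-combination of $\alpha_{s_t}$ modulo $\mu_K$, so that only the jump elements remain to be analyzed.

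For each jump, $\alpha_{s_t}\in L_{s_t}$ gives $\alpha_{s_t}^{\sigma^{p^{s_t}}}=\alpha_{s_t}$, so at most $p^{s_t}$ distinct conjugates exist. The crucial ingredient is the containment
\[
\N_{L_{s_t}/L_{s_{t-1}}}(\alpha_{s_t})\in\langle\alpha_{s_1},\dots,\alpha_{s_{t-1}}\rangle_{\Z[\Gamma]}\cdot\mu_K,
\]
whose $p^{s_{t-1}}$ shifts under $1,\sigma,\dots,\sigma^{p^{s_{t-1}}-1}$ will provide exactly enough linear relations to express every $\alpha_{s_t}^{\sigma^j}$ with $p^{s_t}-p^{s_{t-1}}\le j<p^{s_t}$ in terms of the selected basis elements and (by induction on $t$) conjugates of $\alpha_{s_1},\dots,\alpha_{s_{t-1}}$.

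The main obstacle is this norm-containment. My strategy is to compute $\N_{L_{s_t}/L_{s_{t-1}}}(\eta_{s_t})=\N_{F_{M_{s_t}}/L_{s_{t-1}}}(\eta_{M_{s_t}})$ by transitivity of norm, and then to telescope via~\eqref{Relace} along a descending chain from $M_{s_t}$ to $M_{s_{t-1}}$, reaching a $\Z[\Gamma]$-multiple of $\eta_{s_{t-1}}=\alpha_{s_{t-1}}^{y_{s_{t-1}}}$. To transfer this identity from $\eta_{s_t}$ to $\alpha_{s_t}$ one must invert the zerodivisor exponent $y_{s_t}\in\Z[\Gamma]$, which will be handled inside a suitable quotient of $\Z[\Gamma]$ exactly as in the proof of Theorem~\ref{ExistenceAlfaNu}, via Proposition~\ref{NulovostExtDelitelnost}. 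The detailed telescoping and the choice of quotient parallel those of \cite[Section~3]{AnnIII}, so the routine but lengthy group-ring verification can be imported from there.
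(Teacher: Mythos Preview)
Your proposal is correct and follows essentially the same route as the paper's proof. The paper is even terser than you are: it simply cites \cite[Lemma~5.1]{AnnIII} and records the two key ingredients as \eqref{NormaAlfy} $\N_{L_i/L_{i-1}}(\alpha_i)\in\langle\alpha_{i-1}\rangle_{\Z[\Gamma]}$ and \eqref{NormaGeneratoru} $\langle\N_{L_v/L_u}(\alpha_v)\rangle_{\Z[\Gamma]}=\langle\alpha_u\rangle_{\Z[\Gamma]}$ whenever $\mu_u=\mu_v$; these are precisely your norm-containment and your non-jump elimination, stated in slightly sharper form (in particular \eqref{NormaGeneratoru} directly gives $\alpha_i\in\langle\alpha_{s_t}\rangle_{\Z[\Gamma]}$ without needing to invoke $\gamma_i$ or Proposition~\ref{NulovostExtDelitelnost} again).
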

\begin{proof}
This is proved along similar lines to those in \cite[Lemma 5.1]{AnnIII}. Let us just point out the two main ideas.
For each $1<i\le k$ one can show that
\begin{equation}\label{NormaAlfy}
\N_{L_i/L_{i-1}}(\alpha_i)\in\langle\alpha_{i-1}\rangle_{\Z[\Gamma]},
\end{equation}
and furthermore, for each $0<u<v\le k$ such that $\mu_u=\mu_v$, one may prove the stronger result that
\begin{equation}\label{NormaGeneratoru}
\langle\N_{L_v/L_u}(\alpha_v)\rangle_{\Z[\Gamma]}=
\langle\alpha_u\rangle_{\Z[\Gamma]}.
\end{equation}
This concludes the sketch of the proof.
\end{proof}

From the explicit $\Z$-basis for $\overline\GrEllUn_L$ which appears in Lemma \ref{DruhaBazeNoveGrupy} we easily deduce the following:

\begin{lem}\label{Jednoznacnost}
Let $r$ be the highest jump less than $k$, i.e., $\mu_r<\mu_{r+1}=\dec_s$ where $n_s$ is defined in \eqref{Serazeni}.
Let us assume that $\rho\in\Z[\Gamma]$ is such that $\alpha_k^\rho\in\overline\GrEllUn_{L_r}$. Then 
\begin{align}\label{pouce}
(1-\sigma^{p^r})\rho=0.
\end{align}
\end{lem}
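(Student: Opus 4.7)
My plan is to work in the quotient $M=\overline\GrEllUn_L/\overline\GrEllUn_{L_r}$, viewed as a $\Z[\Gamma]$-module, and to reinterpret the hypothesis $\alpha_k^\rho\in\overline\GrEllUn_{L_r}$ as the statement $\rho\cdot\bar\alpha_k=0$ in $M$, where $\bar\alpha_k$ denotes the image of $\alpha_k$. From Lemma~\ref{DruhaBazeNoveGrupy}, the factor $\overline\GrEllUn_L/\mu_K$ is $\Z$-free with basis partitioned into $\kappa$ families $\{\alpha_{s_t}^{\sigma^i}:0\le i<p^{s_t}-p^{s_{t-1}}\}$. The first $\kappa-1$ families lie in $\overline\GrEllUn_{L_r}$ (since $s_t\le r$ for $t\le\kappa-1$), form a saturated $\Z$-sublattice of $\overline\GrEllUn_L/\mu_K$, and have total cardinality $p^r-1$, which equals the $\Z$-rank of $\overline\GrEllUn_{L_r}/\mu_K$; hence they constitute a $\Z$-basis of $\overline\GrEllUn_{L_r}/\mu_K$. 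Consequently $M$ is $\Z$-free of rank $p^k-p^r$ with basis $\{\sigma^i\bar\alpha_k:0\le i<p^k-p^r\}$, so $\operatorname{Ann}_{\Z[\Gamma]}(\bar\alpha_k)\subseteq\Z[\Gamma]$ has $\Z$-rank exactly $p^r$.

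Next, I would introduce $\nu_{r,k}=\sum_{i=0}^{p^{k-r}-1}\sigma^{ip^r}\in\Z[\Gamma]$, the element corresponding to the relative norm $\N_{L/L_r}$, and verify that $\nu_{r,k}\in\operatorname{Ann}_{\Z[\Gamma]}(\bar\alpha_k)$, equivalently that $\N_{L/L_r}(\alpha_k)\in\overline\GrEllUn_{L_r}$. This is the only substantial technical step, and it proceeds via the norm relations~\eqref{NormaAlfy} and~\eqref{NormaGeneratoru}. Because $r$ is the highest jump below $k$, one has $\mu_{r+1}=\mu_{r+2}=\dots=\mu_k$, so~\eqref{NormaGeneratoru} applied to $u=r+1$, $v=k$ gives $\N_{L/L_{r+1}}(\alpha_k)\in\langle\alpha_{r+1}\rangle_{\Z[\Gamma]}$, and then~\eqref{NormaAlfy} with $i=r+1$ yields
\[
\N_{L/L_r}(\alpha_k)=\N_{L_{r+1}/L_r}\bigl(\N_{L/L_{r+1}}(\alpha_k)\bigr)\in\langle\alpha_r\rangle_{\Z[\Gamma]}\subseteq\overline\GrEllUn_{L_r}.
\]

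To conclude I would identify $\operatorname{Ann}_{\Z[\Gamma]}(\bar\alpha_k)$ with $\nu_{r,k}\Z[\Gamma]$ by a rank and purity argument. From the factorization $X^{p^k}-1=(X^{p^r}-1)\nu_{r,k}(X)$ in the UFD $\Z[X]$, a short calculation yields $\operatorname{Ann}_{\Z[\Gamma]}(\nu_{r,k})=(1-\sigma^{p^r})\Z[\Gamma]$; hence multiplication by $\nu_{r,k}$ induces an isomorphism $\Z[\Gamma]/(1-\sigma^{p^r})\Z[\Gamma]\cong\nu_{r,k}\Z[\Gamma]$ of $\Z$-free modules of rank $p^r$, while the cokernel $\Z[\Gamma]/\nu_{r,k}\Z[\Gamma]\cong\Z[X]/(\nu_{r,k}(X))$ is also $\Z$-free. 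Thus $\nu_{r,k}\Z[\Gamma]$ is saturated in $\Z[\Gamma]$, and the inclusion $\nu_{r,k}\Z[\Gamma]\subseteq\operatorname{Ann}_{\Z[\Gamma]}(\bar\alpha_k)$ of two saturated $\Z$-submodules of equal rank $p^r$ is forced to be an equality. Writing the given $\rho$ as $\nu_{r,k}\tau$ with $\tau\in\Z[\Gamma]$ now yields $(1-\sigma^{p^r})\rho=(1-\sigma^{p^r})\nu_{r,k}\tau=0$, since $(1-\sigma^{p^r})\nu_{r,k}=-(\sigma^{p^k}-1)$ vanishes in $\Z[\Gamma]$.
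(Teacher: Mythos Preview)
Your proof is correct and reaches the same conclusion as the paper, namely that $\rho\in\nu_{r,k}\Z[\Gamma]$ (in the paper's notation, $\rho=\phi(\sigma)\rho'$), from which $(1-\sigma^{p^r})\rho=0$ follows immediately. The ingredients are also the same: the $\Z$-basis of Lemma~\ref{DruhaBazeNoveGrupy}, the fact that the first $\kappa-1$ basis families already form a $\Z$-basis of $\overline\GrEllUn_{L_r}/\mu_K$, and the norm relation $\N_{L/L_r}(\alpha_k)\in\overline\GrEllUn_{L_r}$. Your derivation of the latter, combining \eqref{NormaGeneratoru} for $u=r+1$, $v=k$ with \eqref{NormaAlfy} for $i=r+1$, is in fact more carefully spelled out than the paper's one-line citation of \eqref{NormaGeneratoru}.

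The only genuine difference is in packaging. The paper argues directly by Euclidean division: writing $\rho=f(\sigma)$ with $\deg f<p^k$ and dividing by the monic polynomial $\phi=1+X^{p^r}+\dots+X^{p^k-p^r}$ gives $f=\phi Q+g$ with $\deg g<p^k-p^r$; since $\alpha_k^{\phi(\sigma)}\in\overline\GrEllUn_{L_r}$ one gets $\alpha_k^{g(\sigma)}\in\overline\GrEllUn_{L_r}$, and the explicit basis then forces $g=0$. You instead identify the whole annihilator $\operatorname{Ann}_{\Z[\Gamma]}(\bar\alpha_k)$ with $\nu_{r,k}\Z[\Gamma]$ by a rank-plus-saturation argument (using that $M$ is $\Z$-free, so the annihilator is pure, and that $\nu_{r,k}\Z[\Gamma]$ is pure of the same $\Z$-rank $p^r$). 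Both routes are short; the paper's is slightly more elementary, while yours makes the module-theoretic structure more explicit and yields the stronger statement $\operatorname{Ann}_{\Z[\Gamma]}(\bar\alpha_k)=\nu_{r,k}\Z[\Gamma]$ as a by-product.
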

\begin{proof}
There is a unique polynomial $f\in\Z[x]$ with $\deg f<p^k$, such that $\rho=f(\sigma)$. Let $\phi=x^{p^k-p^r}+\dots+x^{2p^r}+x^{p^r}+1$.
From the euclidean division of $f$ by $\phi$ there exist polynomials $Q,g\in\Z[x]$ such that $f=\phi\cdot Q+g$ where $\deg g<p^k-p^r$.
By assumption, we have $\alpha_k^\rho\in\overline\GrEllUn_{L_r}$, and from \eqref{NormaGeneratoru} we know that 
$\alpha_k^{\phi(\sigma)}=\N_{L_k/L_r}(\alpha_k)\in\overline\GrEllUn_{L_r}$; combining these two relations we obtain
that $\alpha_k^{g(\sigma)}=\frac{\alpha_k^{f(\sigma)}}{\alpha_k^{\phi(\sigma)Q(\sigma)}}\in\overline\GrEllUn_{L_r}$. 
Since $\{\alpha_k$, $\alpha_k^\sigma, \dots$, $\alpha_k^{\sigma^{p^k-p^r-1}}\}$ is a 
part of the $\Z$-basis given in
Lemma~\ref{DruhaBazeNoveGrupy}, and the rest of this $\Z$-basis, namely $\bigcup_{t=1}^{\kappa-1}\{\alpha_{s_t}^{\sigma^i};\,0\le i<p^{s_t}-p^{s_{t-1}}\}$,
is also a $\Z$-basis of $\overline\GrEllUn_{L_r}$ (using again Lemma \ref{DruhaBazeNoveGrupy}); we deduce that $g=0$. In particular,
$\rho=(1+\sigma^{p^r}+\sigma^{2p^r}+\dots+\sigma^{p^k-p^r})\rho'$ 
for some $\rho'\in\Z[\Gamma]$, and thus \eqref{pouce} follows.
\end{proof}

From Theorem~\ref{Indexy}, we know that $\mathcal{O}_L^\times/\overline\GrEllUn_L$ is a finite $\Z[\Gamma]$-module.
Let $(\mathcal{O}_L^\times/\overline\GrEllUn_L)_p$ and $\Cl_p$ denote the $p$-Sylow subgroups of the corresponding $\Z[\Gamma]$-modules.
The aim of this section is to construct annihilators of $\Cl_p$ by means of annihilators
of $(\mathcal{O}_L^\times/\overline\GrEllUn_L)_p$. 
To do this we appeal to the following key theorem which allows one to produce annihilators of $\Cl_p$ from certain units of $L$.
This theorem should be viewed as a modification of a similar result
obtained first by Thaine (see Proposition 6 of \cite{Tha}) and then generalized by Rubin (see Theorem 5.1 of \cite{Rubin}). 

\begin{thm}\label{Theorem12}
Let $\M$ be a power of $p$ divisible by $p^{ks}$.
Assume that $\varepsilon\in \mathcal{O}_L$ is 
$\M$-semispecial, suppose that $V\subseteq L^\times/\M$ is 
a finitely generated $\Z[\Gamma ]$-submodule, and that the class containing $\varepsilon$ belongs to $V$. Let 
$\zobr :\,V\to \Z/\M[\Gamma ]$ be a $\Z[\Gamma ]$-linear map
such that $\zobr(V\cap K^\times)=0$, where $V\cap K^\times$ is taken to mean
$V\cap( K^\times{L^\times}^\M/{L^\times}^\M)$. Then 
$\zobr (\varepsilon)$ annihilates $\Cl_p/(\M/p^{k(s-1)})$.
\end{thm}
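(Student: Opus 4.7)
The plan is to adapt the Thaine--Rubin Euler-system strategy to the current setting, with auxiliary primes drawn from $\mathcal{Q}_\M$. Fix an arbitrary class $c\in\Cl_p$; the goal is to show that $\zobr(\varepsilon)\cdot c$ vanishes in $\Cl_p/(\M/p^{k(s-1)})$. By \v Cebotarev's density theorem applied to a suitable large auxiliary extension of $K$ (roughly, the compositum of $L(\zeta_\M)$, the $\M$-th roots of representatives of $V$, and the Hilbert class field of $L$), one selects a prime $\fq$ of $K$ belonging to $\mathcal{Q}_\M$ together with a prime $\fQ$ of $L$ above $\fq$ such that $[\fQ]=c$ in $\Cl_L$ and such that the Frobenius of $\fq$ in the Kummer-theoretic piece of this extension is prescribed so as to be compatible with the functional $\zobr$.

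Since $\varepsilon$ is $\M$-semispecial, for $\fq$ in a cofinite subset of $\mathcal{Q}_\M$ (which meets the Chebotarev density class) there is a unit $\varepsilon_\fq\in\mathcal{O}_{L[\fq]}^\times$ with $\N_{L[\fq]/L}(\varepsilon_\fq)=1$ and with $\varepsilon\equiv\varepsilon_\fq$ in $(\mathcal{O}_{L[\fq]}/\tilde\fq)^\times/(\M/p^{k(s-1)})$. Hilbert's Theorem 90 applied to the cyclic extension $L[\fq]/L$ then gives a $\beta\in L[\fq]^\times$ with $\varepsilon_\fq=\beta^{\tau-1}$, where $\tau$ generates $\Gal(L[\fq]/L)$. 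The principal divisor $(\beta)$ is $\tau$-stable; since $L[\fq]/L$ is totally ramified at every prime above $\fq$ and unramified elsewhere, it follows that
\[
(\beta)\;=\;\mathfrak{a}\,\mathcal{O}_{L[\fq]}\cdot\prod_{\gamma\in\Gamma}\mathfrak{P}_{\fQ^\gamma}^{b_\gamma},
\]
for some ideal $\mathfrak{a}$ of $L$ and integers $b_\gamma\in\Z$, where $\mathfrak{P}_{\fQ^\gamma}$ denotes the unique prime of $L[\fq]$ above $\fQ^\gamma$. Applying $\N_{L[\fq]/L}$ to this identity shows that $\rho\cdot[\fQ]=0$ in $\Cl_L/\M$, where $\rho=\sum_{\gamma\in\Gamma}b_\gamma\,\gamma^{-1}\in(\Z/\M)[\Gamma]$.

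The crucial remaining step is to identify $\rho$ with $\zobr(\varepsilon)$ modulo $\M/p^{k(s-1)}$. This is a Kummer-duality computation: the class of $\beta$ in $L[\fq]^\times/(L[\fq]^\times)^\M$ defines, through the Kummer pairing at $\fq$, a $(\Z/\M)[\Gamma]$-linear functional on $V$ whose value at $\varepsilon$ is, by the above divisor analysis, $\rho$; the Chebotarev condition on $\fq$ together with the hypothesis $\zobr(V\cap K^\times)=0$ then forces this functional to coincide with $\zobr$ modulo $\M/p^{k(s-1)}$. A preparatory step, to be carried out first, is to enlarge $V$ so as to contain the classes of $\pi_1,\dots,\pi_s$ and of $\beta$, and to extend $\zobr$ compatibly while preserving its vanishing on $V\cap K^\times$; this is routine but needs care. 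The main obstacle lies in this final local Kummer identification and in the careful tracking of the $p^{k(s-1)}$-loss, which precisely reflects the gap between $\M$ and $\M/p^{k(s-1)}$ in the definition of $\M$-semispecial. This is exactly the Rubin-style argument the authors explicitly leave to the reader in the introduction.
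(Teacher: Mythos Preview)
Your outline is correct and follows exactly the Thaine--Rubin strategy that the paper invokes by reference to \cite{AnnI}; the Chebotarev step producing $\fq$ together with a factorization of $z$ through reduction modulo $\fq$ is precisely what the paper isolates as Theorem~\ref{Theorem17}, and the remaining steps (Hilbert~90 for $\beta$, divisor analysis giving $\rho$, the local tame-symbol computation identifying $\rho$ with the image of $\varepsilon_\fq$, and finally the semispecial congruence giving $z(\varepsilon)\equiv\rho\pmod{m/p^{k(s-1)}}$) are as you describe. One small correction of presentation: $\beta\in L[\fq]^\times$ does not lie in $L^\times/m$, so there is no sense in enlarging $V$ to contain it; the functional you need is not ``defined by $\beta$'' but is the composite $\varphi\circ\psi$ supplied by Theorem~\ref{Theorem17}, and the $m/p^{k(s-1)}$-loss enters only through the semispecial congruence between $\varepsilon$ and $\varepsilon_\fq$, not through the Kummer identification itself.
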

\begin{proof}
This can be proved along similar lines as those in \cite[Theorem 12]{AnnI}. In order to guide the reader to make the necessary 
modifications needed for the proof we chose to state below Theorem~\ref{Theorem17} (the needed version of 
\cite[Theorem 17]{AnnI} which has its origin in \cite[Theorem 5.5]{Rubin}). This concludes our rough sketch of the proof. 
\end{proof}

\begin{thm}\label{Theorem17}
Fix a $p$-power $\M$, suppose that $V\subseteq L^\times/\M$ is 
a finitely generated $\Z_p[\Gamma]$-submodule. Without loss of generality we may assume that we have chosen a set of generators of $V$ which belongs to $\mathcal{O}_L$. 
Let us suppose that we are given a $\Z_p[\Gamma]$-linear map $\zobr :\,V\to \Z/\M[\Gamma]$ which is such that
$\zobr (V\cap K^\times)=0$. Then, for any $\mathfrak c\in\Cl_p$, there exist infinitely many unramified primes $\fQ$ in $L$ of
absolute degree $1$ satisfying the following conditions:

Let $\fq$ be the prime ideal of $K$ below $\fQ$ and
$q$ be the rational prime number below $\mathfrak q$.
\begin{enumerate}
\item[(i)] $[\fQ]=\mathfrak c$, where $[\fQ]$ is the projection of the ideal class of  $\fQ$ into $\Cl_p$;
\item[(ii)] $q\equiv1+\M\pmod{\M^2}$;
\item[(iii)] for each $j=1,\dots,s$, the class of $\pi_j$ is an $\M$-th power in $(\mathcal{O}_K/\fq)^\times$;
\item[(iv)] the support of any of the chosen generators of $V$ does not contain any prime of $L$ above $\mathfrak q$, and
there is a $\Z_p[\Gamma]$-linear map $\varphi :\,(\mathcal{O}_L/\fq)^\times/\M\to\Z/\M[\Gamma]$ such that the following diagram
$$
\xymatrix{ V \ar[r]^\zobr \ar[d]^\psi&\Z/\M[\Gamma]\\
(\mathcal{O}_L /\fq)^\times/\M \ar@{-->}[ur]_\varphi&}
$$
commutes, where $\psi$ corresponds to the reduction map.
\end{enumerate}
\end{thm}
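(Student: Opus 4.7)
The plan is to adapt Rubin's approach in \cite[Theorem 5.5]{Rubin}, in the same spirit as \cite[Theorem 17]{AnnI}: realize the four conditions (i)--(iv) as Frobenius conditions inside one sufficiently large Galois extension of $K$, and then invoke \v Cebotarev's Density Theorem. I would fix lifts $v_1,\dots,v_r\in\mathcal{O}_L$ of a set of $\Z_p[\Gamma]$-generators of $V$, and form the Kummer-type field
$$
\Omega=H_L\cdot L\bigl(\zeta_{\M^2},\sqrt[\M]{v_1},\dots,\sqrt[\M]{v_r},\sqrt[\M]{\pi_1},\dots,\sqrt[\M]{\pi_s}\bigr),
$$
where $H_L$ is the Hilbert class field of $L$; set $F=L(\zeta_{\M^2})$. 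Kummer theory identifies $\Gal(\Omega/H_L\cdot F)$ with a subgroup of $\Hom(W,\mu_\M)$, where $W\subseteq L^\times/\M$ is generated by the classes of the $v_i$'s and the $\pi_j$'s, the identification being given by the pairing $\sigma\mapsto(\beta\mapsto\sigma(\sqrt[\M]{\beta})/\sqrt[\M]{\beta})$.

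Next I would translate $\zobr$ into an automorphism of $\Omega$ through this pairing. Decomposing $\zobr$ along a basis of $\Z/\M[\Gamma]$ produces $\Z_p$-linear maps $\zobr_1,\dots,\zobr_{p^\exponent}:V\to\Z/\M$. The crucial point is that the hypothesis $\zobr(V\cap K^\times)=0$ forces each $\zobr_i$ to vanish on the classes of $\pi_1,\dots,\pi_s\in K^\times$, so each $\zobr_i$ corresponds to a Kummer functional on $W$ that is trivial on the $\pi_j$-factor. Assembling these functionals via a fixed identification $\mu_\M\cong\Z/\M$ yields an element $\tau_0\in\Gal(\Omega/H_L\cdot F)$ which fixes every $\sqrt[\M]{\pi_j}$ and encodes $\zobr$. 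I would then extend $\tau_0$ to $\tau\in\Gal(\Omega/K)$ by prescribing: trivial restriction to $L$ (so primes split completely there), the substitution $\zeta_{\M^2}\mapsto\zeta_{\M^2}^{1+\M}$ on $\Q(\zeta_{\M^2})$, and the Artin symbol realizing $\mathfrak c$ on $H_L/L$. The main obstacle is verifying that these partial restrictions are compatible, i.e.\ that the relevant subfields $L$, $F$, $H_L$, $K(\sqrt[\M]{\pi_j})$, and $L(\sqrt[\M]{v_i})$ are sufficiently linearly disjoint over their natural intersections. This is a standard Kummer-theoretic disjointness check relying on $p\nmid h$, $p$ unramified in $L/\Q$, and $p\ne q_j$ for each $j$.

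Finally, \v Cebotarev produces infinitely many primes $\fQ$ of $L$ of absolute degree $1$ whose Frobenius in $\Omega/K$ is $\tau$. The restrictions of $\tau$ immediately give (i) from $H_L/L$, (ii) from $\Q(\zeta_{\M^2})$, and (iii) from the trivial action on each $\sqrt[\M]{\pi_j}$, via the identification \eqref{Artin}. For (iv), total splitting of $\fq$ in $L/K$ (which follows from $\tau|_L=\id$) makes $(\mathcal{O}_L/\fq)^\times/\M$ a free $\Z/\M[\Gamma]$-module of rank one. Under this identification, the Kummer pairing matches the reduction $\psi(v_i)\bmod\fq$ with the evaluation of $\tau$ on $\sqrt[\M]{v_i}$, which by construction equals $\zobr(v_i)$ after the canonical identification $\mu_\M\cong\Z/\M$. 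Hence the desired $\Z_p[\Gamma]$-linear map $\varphi$ is forced: one defines it on the cyclic $\Z/\M[\Gamma]$-generator of $(\mathcal{O}_L/\fq)^\times/\M$ so that the composition $\varphi\circ\psi$ agrees with $\zobr$ on each $v_i$, and $\Z_p[\Gamma]$-linearity together with the generator equation completes the determination.
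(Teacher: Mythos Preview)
Your sketch is exactly the Rubin--\v Cebotarev argument the paper intends: the paper's own proof consists of the single line ``This can be proved in the same way as \cite[Theorem 17]{AnnI}'', and your outline is a faithful expansion of that reference (build the Kummer tower over $H_L\cdot L(\zeta_{\M^2})$, encode $z$ and the conditions (i)--(iii) as a Frobenius class, invoke \v Cebotarev, and read off $\varphi$ from the freeness of $(\mathcal O_L/\fq)^\times/\M$ over $\Z/\M[\Gamma]$). One small simplification worth noting: because $z$ is $\Z/\M[\Gamma]$-linear the components $z_1,\dots,z_{p^k}$ are all determined by any one of them, so a single Kummer functional suffices to pin down $\tau_0$, and the $\Gamma$-equivariance of $\varphi$ in (iv) then follows automatically from the $\Gamma$-action permuting the primes above $\fq$.
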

\begin{proof}
This can be proved in the same way as \cite[Theorem 17]{AnnI}.
\end{proof}

We may finally present the main result of this paper.
\begin{thm}\label{Theorem4}
Let $r$ be the highest jump less than $k$, i.e., $\mu_r<\mu_{r+1}=\dec_s$.
If $\varkappa\in\ann{(\mathcal{O}_L^\times/\overline\GrEllUn_L)_p}$,
then $(1-\sigma^{p^r})\varkappa$ annihilates $\Cl_p$. In other words, we have
$$
\ann{(\mathcal{O}_L^\times/\overline\GrEllUn_L)_p}\subseteq
\ann{(1-\sigma^{p^r})\Cl_p}.
$$
The  number $r$ can be characterized as follows: 
$p^{k-r}=\max\{t_j;\,j\in J\}$, where $J=\{j\in\{1,\dots,s\};\,\dec_j=\dec_s\}$.
\end{thm}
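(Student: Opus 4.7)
The plan is to invoke the Thaine--Rubin machinery of Theorem~\ref{Theorem12}, applied to the $\M$-semispecial element $\alpha_k$ furnished by Theorem~\ref{Semisp}. Given $\varkappa\in\ann{(\mathcal{O}_L^\times/\overline\GrEllUn_L)_p}$, pick $\M$, a power of $p$ divisible both by $p^{ks}$ and by $|\Cl_p|\cdot p^{k(s-1)}$. It suffices to produce a finitely generated $\Z[\Gamma]$-submodule $V\subseteq L^\times/\M$ containing the class of $\alpha_k$ together with a $\Z[\Gamma]$-linear map $\zobr\colon V\to\Z/\M[\Gamma]$ such that $\zobr(V\cap K^\times)=0$ and $\zobr(\alpha_k)\equiv(1-\sigma^{p^r})\varkappa\pmod{\M}$; Theorem~\ref{Theorem12} will then give that $(1-\sigma^{p^r})\varkappa$ kills $\Cl_p/(\M/p^{k(s-1)})=\Cl_p$.

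The heart of the construction is a $\Z[\Gamma]$-linear map $g\colon\overline\GrEllUn_L\to\Z[\Gamma]$ defined on generators by $g(\zeta)=0$ for $\zeta\in\mu_K$, $g(\alpha_i)=0$ for $i<k$, and $g(\alpha_k)=1-\sigma^{p^r}$. The well-definedness of $g$ is the \emph{main technical obstacle}: one must verify that whenever $\alpha_k^\rho$ lies in $\langle\mu_K,\alpha_1,\dots,\alpha_{k-1}\rangle_{\Z[\Gamma]}$ for some $\rho\in\Z[\Gamma]$, one has $(1-\sigma^{p^r})\rho=0$ in $\Z[\Gamma]$. This is essentially the content of Lemma~\ref{Jednoznacnost}: because $r$ is the highest jump below $k$, the indices $r+1,\dots,k-1$ are not jumps, so by the norm relations \eqref{NormaAlfy} and \eqref{NormaGeneratoru} each $\alpha_i$ for $r<i<k$ belongs to the $\Z[\Gamma]$-submodule generated by $\alpha_k$ (up to norms descending into $\overline\GrEllUn_{L_r}$). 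This lets us reduce the hypothesis to $\alpha_k^\rho\in\overline\GrEllUn_{L_r}$, to which Lemma~\ref{Jednoznacnost} applies and yields the claim.

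To extend $g$ to $\mathcal{O}_L^\times$, observe that since $\varkappa$ annihilates $(\mathcal{O}_L^\times/\overline\GrEllUn_L)_p$, the class of $u^\varkappa$ in $\mathcal{O}_L^\times/\overline\GrEllUn_L$ has order coprime to $p$ for every $u\in\mathcal{O}_L^\times$; hence $u^{N\varkappa}\in\overline\GrEllUn_L$ for some integer $N$ coprime to $p$. As $\M$ is a $p$-power, $N$ is invertible modulo $\M$, and we set $\zobr(u)\equiv N^{-1}\,g(u^{N\varkappa})\pmod{\M}$ on the submodule $V\subseteq L^\times/\M$ generated by the image of $\mathcal{O}_L^\times$. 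A routine check, using that two admissible integers $N,N'$ yield the same value modulo $\M$ thanks to the $\Z[\Gamma]$-linearity of $g$, shows $\zobr$ is well defined and $\Z[\Gamma]$-linear. Since $V\cap K^\times$ is generated modulo $(L^\times)^\M$ by $\mu_K$ (as $\mathcal{O}_K^\times=\mu_K$) and $g$ vanishes on $\mu_K$, we get $\zobr(V\cap K^\times)=0$. Finally, $\zobr(\alpha_k)=g(\alpha_k^\varkappa)=\varkappa\cdot g(\alpha_k)=\varkappa(1-\sigma^{p^r})$, completing the input to Theorem~\ref{Theorem12}.

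The closing characterization of $r$ is an unwinding of the definitions in \eqref{DefiniceM} and \eqref{mu}: $r$ is the highest jump below $k$ if and only if $r+1$ is the smallest index $i$ with $M_i\cap J\ne\emptyset$, where $J=\{j:\dec_j=\dec_s\}$. Since $j\in M_i$ is equivalent to $\ram_j>p^{k-i}$, this reads $p^{k-r}=\max\{\ram_j:j\in J\}$, as asserted.
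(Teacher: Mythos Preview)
Your overall strategy matches the paper's: feed the $\M$-semispecial unit $\alpha_k$ into Theorem~\ref{Theorem12} via a suitable $\Z[\Gamma]$-linear map, and read off that $(1-\sigma^{p^r})\varkappa$ annihilates $\Cl_p$. The extension step (replacing the paper's fixed prime-to-$p$ integer $f$ by a varying $N$) and the characterization of $r$ are fine.

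There is, however, a genuine gap in your construction of $g$. You set $g(\alpha_i)=0$ for \emph{every} $i<k$ and $g(\alpha_k)=1-\sigma^{p^r}$, and claim well-definedness reduces to: whenever $\alpha_k^\rho\in\langle\mu_K,\alpha_1,\dots,\alpha_{k-1}\rangle_{\Z[\Gamma]}$, one has $(1-\sigma^{p^r})\rho=0$. But this claim is \emph{false} as soon as $r<k-1$. Indeed, take $\rho=1+\sigma^{p^{k-1}}+\cdots+\sigma^{(p-1)p^{k-1}}$, the norm element from $L_k$ to $L_{k-1}$; then $\alpha_k^\rho=\N_{L_k/L_{k-1}}(\alpha_k)\in\langle\alpha_{k-1}\rangle_{\Z[\Gamma]}$ by \eqref{NormaGeneratoru} (since $\mu_{k-1}=\mu_k$), yet $(1-\sigma^{p^r})\rho\neq 0$ in $\Z[\Gamma]$. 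Your own reduction exposes the problem: absorbing the $\alpha_i$ with $r<i<k$ into powers of $\alpha_k$ replaces $\rho$ by some $\rho'=\rho-\tau$, and Lemma~\ref{Jednoznacnost} only yields $(1-\sigma^{p^r})\rho'=0$, not $(1-\sigma^{p^r})\rho=0$. In other words, the prescriptions $g(\alpha_i)=0$ for $r<i<k$ are inconsistent with $g(\alpha_k)=1-\sigma^{p^r}$.

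The paper avoids this by never imposing $g(\alpha_i)=0$ for $r<i<k$. Instead it uses Lemma~\ref{DruhaBazeNoveGrupy} to write any $\varepsilon^{f\varkappa}\in\overline\GrEllUn_L$ as $\delta\cdot\alpha_k^{\rho}$ with $\delta\in\overline\GrEllUn_{L_r}$, and sets $z'(\varepsilon)=(1-\sigma^{p^r})\rho$; Lemma~\ref{Jednoznacnost} then gives well-definedness directly, because the only ambiguity is in the choice of $\rho$ modulo exponents $\rho''$ with $\alpha_k^{\rho''}\in\overline\GrEllUn_{L_r}$. Your argument is easily repaired in the same way: drop the requirement $g(\alpha_i)=0$ for $r<i<k$ and define $g$ via the decomposition $\overline\GrEllUn_L=\overline\GrEllUn_{L_r}\cdot\langle\alpha_k\rangle_{\Z[\Gamma]}$. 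The rest of your proof then goes through.
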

\begin{proof}
Fix a $p$-power $\M$ which is large enough so that $\M\nmid p^{ks}h_L$ and let
$$
\varkappa\in\ann{(\mathcal{O}_L^\times/\overline\GrEllUn_L)_p}
$$ 
be a fixed annihilator.
We shall first construct a $\Z[\Gamma ]$-linear map $\zobr':\,\mathcal{O}_L^\times\to\Z[\Gamma]$, that will only depend on the annihilator $\varkappa$, 
and then consider the induced
map $z:\,\mathcal{O}_L^\times/m\to\Z/m[\Gamma]$. Let $f$ be the greatest divisor of the index $[\mathcal{O}_L^\times:\overline\GrEllUn_L]$ which is not divisible by $p$.
Then
$$
f\varkappa\in\ann{\mathcal{O}_L^\times/\overline\GrEllUn_L},
$$ 
and thus, for any unit $\varepsilon\in\mathcal{O}_L^\times$, we have $\varepsilon^{f\varkappa}\in\overline\GrEllUn_L$. From 
Lemma~\ref{DruhaBazeNoveGrupy}, there is $\rho\in\Z[\Gamma]$ and $\delta\in\overline\GrEllUn_{L_r}$ such that $\varepsilon^{f\varkappa}=\delta\alpha_k^\rho$. 
We define $\zobr'(\varepsilon)=(1-\sigma^{p^r})\rho$. Let us check that the the map $z'$ is well-defined. If
$\varepsilon^{f\varkappa}=\delta'\alpha_k^{\rho'}$ for some $\rho'\in\Z[\Gamma]$ and $\delta'\in\overline\GrEllUn_{L_r}$, then 
$\alpha_k^{\rho-\rho'}=\delta'\delta^{-1}\in\overline\GrEllUn_{L_r}$; applying Lemma~\ref{Jednoznacnost} we find that
$(1-\sigma^{p^r})(\rho-\rho')=0$, and so $z'$ is well-defined. It follows directly from the definition of $z'$ that $\zobr'(\alpha_k)=(1-\sigma^{p^r})f\varkappa$ and 
that $\zobr'(\varepsilon)=0$ if $\varepsilon\in\mathcal{O}_L^\times\cap K^\times=\mu_K$. 

Let $V=\mathcal{O}_L^\times/\M$. We want to apply Theorem~\ref{Theorem12} to the $\Z_p[\Gamma]$-linear map 
$\zobr :\,V\to \Z/\M[\Gamma ]$ determined by the map $\zobr'$. Now, from Theorem~\ref{Semisp}, we know that $\alpha_k\in\ca{O}_L^{\times}$ is $\M$-semispecial 
and therefore, from Theorem \ref{Theorem12},
we obtain that $z(\alpha_k)=(1-\sigma^{p^r})f\varkappa$ annihilates $\Cl_p/(\M/p^{k(s-1)})$. 
Finally, since $p\nmid f$ and $\M\nmid p^{ks}h_L$, it follows that $\Cl_p/(\M/p^{k(s-1)})=\Cl_p$, and therefore $(1-\sigma^{p^r})\varkappa$ annihilates
$\Cl_p$.

It remains to prove the last equality in Theorem \ref{Theorem4} which gives a characterization of the index $r$. Recall that for each index $i\in\{1,\ldots,k\}$,
$M_i=\{j\in\{1,\ldots,s\}:t_j>p^{k-i}\}$ by \eqref{DefiniceM} and that $\mu_i=n_{\max M_i}$ by \eqref{mu}. It follows from the definitions of $J$ and $\mu_i$  that
\begin{align}\label{equi}
\mu_i<n_s\Longleftrightarrow M_i\cap J=\emptyset.
\end{align} 
In particular, if we set $i=r$ in \eqref{equi} we find that
$M_r\cap J=\emptyset$ and therefore, for each $j\in J$ we must have the inequality (a) $t_j\leq p^{k-r}$.
Let us show that the reverse inequality holds true for at least one index. 
Since $\mu_{r+1}=n_s$ it follows from \eqref{equi} that $M_{r+1}\cap J\neq\emptyset$. Hence there must exist
at least one index
$j_0\in M_{r+1}\cap J$; and by definition
of $M_{r+1}$, we must have that (b) $t_{j_0}>p^{k-(r+1)}$. Finally, combining the inequalities (a) and (b) together we find that
$t_{j_0}=p^{k-r}$ and thus $p^{k-r}=t_{j_0}=\max\{t_j;\,j\in J\}$.
\end{proof}


\section*{References}

\end{document}